\definecolor{Red}{rgb}{0.7,0,0.1}
\definecolor{Green}{rgb}{0,0.7,0}
\definecolor{labelkey}{rgb}{0,0,1}
\numberwithin{equation}{section}
\newtheorem{Thm}{Theorem}[section]
\newtheorem{Lem}[Thm]{Lemma}
\newtheorem{Prop}[Thm]{Proposition}
\newtheorem{Cor}[Thm]{Corollary}
\newtheorem{Rmk}[Thm]{Remark}
\newtheorem*{Thm*}{Theorem}
\newcommand{\E}{\mathbb{E}}
\newcommand{\Prb}{\mathbb{P}}
\newcommand{\RR}{\mathbb{R}}
\newcommand{\TT}{\mathbb{T}^2}
\newcommand{\al}{\alpha}
\newcommand{\pg}{>}
\newcommand{\bfU}{\mathbf{u}}
\newcommand{\Trs}{\theta}
\begin{document}

\author[F\" oldes and Sy]{Juraj F\" oldes$^*$ \and Mouhamadou Sy}

\address{Juraj F\" oldes
\newline \indent Department of Mathematics, University of Virginia \indent 
\newline \indent  322 Kerchof Hall, Charlottesville, VA 22904-4137,\indent }
\email{foldes@virginia.edu}

\address{Mouhamadou Sy
\newline \indent Department of Mathematics, University of Virginia \indent 
\newline \indent  303 Kerchof Hall, Charlottesville, VA 22904-4137,\indent }
\email{ms3wq@virginia.edu}

\thanks{ J. F\" oldes is partly supported by the National Sicence Foundation under the grant NSF-DMS-1816408. 
\\
\indent
$^*$ corresponding author}

\title[Invariant measures and GWP for SQG]{Invariant measures and global well posedness  for SQG equation}

\begin{abstract}
We construct an invariant measure $\mu$ for the Surface Quasi-Geostrophic (SQG) equation and show that almost all functions in the support of $\mu$ are initial conditions of  global, unique solutions of SQG, that depend continuously on the initial data.  In addition, we show that the support of $\mu$ is infinite dimensional, meaning that 
it is not locally a subset of any compact set with finite Hausdorff dimension. Also, there are global solutions that have arbitrarily large initial condition. 
The measures $\mu$ is obtained via fluctuation-dissipation method, that is, as a limit of invariant measures for 
stochastic SQG with a carefully chosen dissipation and random forcing.

\noindent
\textbf{Keywords}: Surface Quasi Geostrophic equation, invariant measures, fluctuation-dissipation method, global solutions,  stochastic partial differential equations

\noindent
\textbf{MSC}: 35Q35, 76D03, 35R60, 60H15, 37C40   
\end{abstract}

\maketitle

\section{Introduction}

The goal of the present manuscript is to construct an invariant measure $\mu$ for the  Surface Quasi-Geostrophic (SQG) equation
\begin{equation}\label{sqg}
\Trs_t + \bfU \cdot \nabla \Trs = 0
\end{equation}
and to prove $\mu$ almost sure global well posedness of \eqref{sqg}. 
First, we establish
the
existence of invariant measures $(\mu_\alpha)_{\alpha > 0}$ for 
the stochastic SQG 
\begin{equation}\label{sqgs}
d \Trs + \bfU \cdot \nabla \Trs dt = -\al \Delta^2 \Trs dt + \alpha \nabla(|\nabla \Trs|^2 \nabla \Trs) + \sqrt{\alpha} d\eta \qquad \textrm{on } \TT \times (0, \infty) \,,
\end{equation}
and then we construct $\mu$ as a limit of $\mu_\alpha$ as $\alpha \to 0^+$. 
In addition, we prove that all functions in the support of $\mu$ are initial conditions of global, regular solutions, and the support of $\mu$ is infinite dimensional. Before 
we precisely formulate our main results, let us fix the notation and provide a motivation for our study. 

To avoid unnecessary technicalities associated with boundary conditions, we work on two dimensional flat torus $\TT$, however most of our techniques could be applied 
to domains
with boundary. 
Unless indicated otherwise, we always assume that $\Trs$ has zero mean for all times, that is, 
\begin{equation}
\int_{\TT} \Trs(x, t) \, dx = 0 \qquad t \geq 0 \,.
\end{equation}
We assume that   $\Trs : \RR_+\times \TT \to \RR$ has sufficient regularity (as detailed below), and 
$\bfU=(-\partial_y,\partial_x)(-\Delta)^{-\frac{1}{2}}\Trs=R^\perp\Trs$ is the Riesz transform of $\Trs$, that is, 
\begin{equation}\label{rietra}
\bfU = F^{-1} \left(-i \frac{\mathbf{\xi}}{|\mathbf{\xi}|}F(\Trs)\right) \,,
\end{equation} 
where $F$ and $F^{-1}$ denote respectively Fourier and inverse Fourier transform. 
As usual, we work with cylindrical Weiner process defined on a filtered probability space 
$(\Omega, \mathcal{F}, \mathcal{F}_{t\geq 0}, \mathbb{P})$ and our 
 stochastic forcing has the form 
\begin{equation}\label{dfns}
\eta (t,x) = \sum_{j = 1}^\infty a_j e_j(x) W_j(t) \,, 
\end{equation}
where $e_j$ are  eigenfunctions of $-\Delta$ on the torus $\TT,$ ordered such that the corresponding eigenvalues $\lambda_j > 0$ form a non-decreasing sequence, 
and $(a_j)$ is a
sequence of real numbers such that 
\begin{equation}
A_0 = \sum_{j = 1}^\infty a_j^2 < \infty \,.
\end{equation}
Finally,  we complement \eqref{sqgs} with appropriate initial condition specified below. 

The SQG equation \eqref{sqg} appears as a  model for the temperature 
of stratified atmosphere on the rapidly rotating planet or as  a model of ocean dynamics on certain scales 	\cite{Blumen} 
(for derivation, applications to ocean and atmosphere dynamics, and more references see \cite{pedlosky1992geophysical} or a more recent survey \cite{Lapeyre}). From mathematical 
perspective, the SQG equation attracted a lot of attention due to many similarities with three dimensional Euler equation. Most nobably, 
the vector $\nabla^\perp \Trs$ satsifies an analogue of the Euler equation in the vorticity form. In particular, both equations contain vortex stretching term
and a divergence free drift term, however one is posed in 2D whereas the other one in 3D and the constitutive laws are different, 
see the seminal work by Constantin, Majda, and Tabak \cite{CMT94} for more discussion.  

Although the local existence and uniqueness of smooth solutions of \eqref{sqg} was already resolved
in \cite{CMT94}, despite many efforts, the global existence of solutions on a torus remains open. The blow-up scenario proposed in
 \cite{CastroCordoba} was ruled out by precise numerical simulations in \cite{OhkitaniYamada} and analytically in 
\cite{Cordoba97, Cordoba98, ConstantinNieSch}. Another mechanism of gradient blow up based on the propagation of small instabilities in 
thin filaments was proposed in \cite{scott_2011}. 
 We remark that blow-up was constructed in \cite{CastroCordoba} for infinite energy initial conditions on $\RR^2$. 

In \cite{KiselevNazarov}, Kiselev and Nazarov 
showed that there exists a solution with initial condition having arbitrarily small initial conditions that attains arbitrary big norms in finite time. Later, motivated by a construction for the Euler equation \cite{Zlatos15}, it was shown in \cite{he2019small} that there are solutions of \eqref{sqg} with $W^{2, \infty}$ norm 
growing exponentially (along sub-sequence) as time goes to infinity
\begin{equation}\label{kni}
\sup_{t \leq T} \|\nabla^2 \Trs(\cdot, t)\|_{L^\infty} \geq e^{\gamma T} \qquad \textrm{for some } \gamma > 0 \,.
\end{equation}
Also, very little is known about non-equilibrium global smooth solutions for SQG. In fact the only example was given in \cite{castro2016global}, where 
with a rigorous, computer assisted proof the authors proved a global existence for initial conditions on 
one dimensional bifurcation branch close to a specific radial equilibrium.  

We just briefly remark that one can also consider weak solutions of \eqref{sqg}, which are known to be global \cite{Marchand, Resnick}. However, the uniqueness
of weak solution was an challenging open problem \cite{DeLellisSzekelyhidi} that was solved by establishing non-uniqueness in \cite{BuckmasterShkollerVicol}, see 
also \cite{rusin2011,  AzzamBedrossian}. Also, several regularized models (e.g. additional dissipation, or smoother constitutive law) were
 introduced for which one can prove global well posedness of solutions, 
 see \cite{KiselevNazarovVolberg, CaffarelliVasseur, ConstantinVicol} and \cite{ConstantinIIyerWu}.

In the present manuscript we utilize fluctuation-dissipation method to construct global solutions of \eqref{sqg}. The idea is to add a regularizing higher order differential terms, which guarantee global well posedness, and a stochastic forcing that keeps the energy balance in \eqref{sqgs}. Note that the strength of the forcing and the coefficients 
of the smoothing operators are carefully balanced.  
We prove that the stochastic SQG equation possesses an invariant measure supported on appropriate Sobolev spaces and by passing to the limit, we obtain 
an invariant measure $\mu$ for the deterministic SQG \eqref{sqg}. Then, we investigate properties of $\mu$.  

Let us describe known results for problems close to \eqref{sqgs}. 
Well posedness of stochastic SQG with either additive or multiplicative noise and additional sub-critical smoothing (dissipation of the form $(- \Delta)^\beta$, 
$\beta > \frac{1}{2}$)
was studied in \cite{RocknerZhuZhu}. The authors proved that the problem is pathwise globally well posed and under additional assumptions they showed that there exists a unique invariant measure, which is ergodic, and attracts all distributions at an algebraic rate.  Later large deviation principles for stochastic SQG were proved in 
\cite{LiuRocknerZhu}. 
Note that stochastic Quasi-geostrophic (which contains additional Laplacian compared to SQG) was earlier studied in \cite{BrannanDuanWanner, HuangGuoHan}. A regularization of \eqref{sqg} with help of the random diffusion was proved in 
\cite{BuNaStWi} for sufficiently small smooth initial conditions. 

Below, we first investigate the pathwise global well posedness of \eqref{sqgs} and 
then we prove for each $\alpha > 0$  the existence of invariant measure $\mu_\alpha$ supported on $H^2$. The choice of bi-Laplacian in \eqref{sqg} rather than 
Laplacian, stems from the fact, that we need $\mu_\alpha$ to be supported on $H^2$
rather than $H^1$. Otherwise, 
after passing $\alpha \to 0$, 
we would obtain a measure supported on $H^1$ which is not sufficient for the proof of uniqueness of solutions of \eqref{sqg} (see below).
Our first main result is stated in the following theorem, for more precise formulation see Theorems \ref{TheoremGWP} and
\ref{TheoremStationaryMeasures} below. 

\begin{Thm}\label{thm-met}
Assuming $A_0 < \infty$ and appropriate moment bounds on the initial distribution (see Theorem \ref{TheoremGWP} below), 
almost surely there exists a pathwise global solution to \eqref{sqgs}. Furthermore, 
\eqref{sqgs} admits at least one stationary measure $\mu_\alpha$ supported on 
$H^2(\TT) \cap W^{1, 4}(\TT)$.  
\end{Thm}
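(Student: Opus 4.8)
The plan is to establish the two claims of Theorem \ref{thm-met} in sequence: first the pathwise global well posedness of the regularized stochastic equation \eqref{sqgs} for fixed $\alpha > 0$, and then the existence of a stationary measure $\mu_\alpha$ via a Krylov--Bogoliubov compactness argument. The key structural observation is that the added terms $-\alpha \Delta^2 \Trs$ and $\alpha \nabla(|\nabla \Trs|^2 \nabla \Trs)$ are not arbitrary regularizers; they are precisely chosen so that the natural energy functional controlling the $H^2 \cap W^{1,4}$ norm dissipates. My first step would therefore be to set up the a priori estimates. Testing \eqref{sqgs} against $\Delta^2 \Trs$ (in a Galerkin/spectral truncation, so all manipulations are rigorous), the bi-Laplacian contributes a coercive dissipation term $\alpha \|\Delta^2 \Trs\|_{L^2}^2$, while the quasilinear term, after integration by parts, yields a sign-definite contribution controlling $\int |\nabla \Trs|^2 |\nabla^2 \Trs|^2$, hence the $W^{1,4}$ part. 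The nonlinear SQG transport term $\bfU \cdot \nabla \Trs$ must be absorbed into this dissipation; here one exploits that $\bfU = R^\perp \Trs$ is of the same order as $\Trs$ and the Riesz transform is bounded on $L^p$, so the advection is subcritical relative to the fourth-order dissipation. Applying Itô's formula to the energy and taking expectations, the stochastic forcing contributes only the bounded term $\alpha A_0$ (times a multiplicative constant from the $e_j$), which is exactly balanced by the $\sqrt\alpha$ scaling of the noise against the $\alpha$ scaling of the dissipation.

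Second, I would upgrade these a priori bounds into a genuine global existence and uniqueness statement. Because the dissipation is fourth-order and the nonlinearity is subcritical with respect to it, local well posedness in $H^2$ follows by a standard fixed-point/Galerkin scheme, and the uniform-in-time energy estimate (in expectation, plus a pathwise Gronwall-type control of the stopping time) rules out finite-time blow up almost surely, giving the pathwise global solution. I would also record uniform moment bounds $\E \sup_{t \le T} \|\Trs(t)\|_{H^2 \cap W^{1,4}}^2 < \infty$ together with a time-averaged bound on $\E \int_0^T \|\Delta^2\Trs\|_{L^2}^2\,dt$, since these are the inputs needed for the second half.

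Third, for the invariant measure I would apply the Krylov--Bogoliubov theorem. Let $(P_t)$ be the Markov semigroup of the solution and form the time-averaged measures $\nu_T = \frac{1}{T}\int_0^T P_t^* \delta_{\theta_0}\,dt$ on $H^2 \cap W^{1,4}$. The dissipation estimate shows that the time average of $\E\|\Delta^2 \Trs\|_{L^2}^2$ is bounded uniformly in $T$, so $\nu_T$ puts most of its mass on bounded balls of the higher regularity space $H^4$, which embeds \emph{compactly} into $H^2 \cap W^{1,4}$. This yields tightness of $(\nu_T)$, and any weak limit point $\mu_\alpha$ is, by the Feller property of $(P_t)$ (which follows from the continuous dependence on initial data established alongside well posedness), a stationary measure supported on $H^2(\TT) \cap W^{1,4}(\TT)$.

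The main obstacle I anticipate is the a priori estimate in the first step, specifically controlling the SQG advection term $\int (\bfU\cdot\nabla\Trs)\,\Delta^2\Trs\,dx$ by the available dissipation without losing the uniformity needed later. Because $\bfU$ and $\Trs$ have the same regularity, this term is genuinely nonlinear and one must interpolate carefully, using the boundedness of the Riesz transform together with Gagliardo--Nirenberg inequalities, to bound it by $\frac{\alpha}{2}\|\Delta^2\Trs\|_{L^2}^2$ plus lower-order terms controlled by the energy itself; the quasilinear $W^{1,4}$-dissipation is what provides the extra room to close this estimate. Getting the constants to be independent of the Galerkin truncation and of $\alpha$ (up to the explicit $\alpha A_0$ forcing balance) is the delicate point, and is exactly why the bi-Laplacian is chosen over the Laplacian, as the authors note in the text preceding the theorem.
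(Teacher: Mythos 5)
Your overall architecture (pathwise a priori estimates, then Krylov--Bogoliubov time averages) matches the paper, but the central energy estimate is set up at the wrong level, and this creates a genuine gap. You propose to test \eqref{sqgs} against $\Delta^2\Trs$, obtain the dissipation $\alpha\|\Delta^2\Trs\|_{L^2}^2$, and derive a time-averaged bound on $\E\int_0^T\|\Delta^2\Trs\|_{L^2}^2\,dt$ from which tightness in $H^2\cap W^{1,4}$ would follow via the compact embedding of $H^4$-balls. Three things go wrong. First, the It\^o correction for the functional $\|\Delta\Trs\|^2$ is $\alpha\sum_j a_j^2\|\Delta e_j\|^2=\alpha\sum_j\lambda_j^2a_j^2$, which is not finite under the stated hypothesis $A_0=\sum_j a_j^2<\infty$; with $L^2$-valued noise and bi-Laplacian dissipation the solution lives in $L^2_tH^2_x$, not $L^2_tH^4_x$, so the higher-order energy balance you need is simply unavailable. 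Second, the pairing $\langle\nabla(|\nabla\Trs|^2\nabla\Trs),\Delta^2\Trs\rangle$ is not sign-definite as you assert; Section \ref{section_QualProperties} of the paper shows how delicate the interaction between the $p$-Laplacian and $\Delta^2$ is even for scalar functionals $f(\Trs)$, and the claimed control of $\int|\nabla\Trs|^2|\nabla^2\Trs|^2$ would require a proof. Third, the obstacle you yourself identify as the crux, absorbing $\int_{\TT}(\bfU\cdot\nabla\Trs)\,\Delta^2\Trs\,dx$ into the dissipation with constants independent of $\alpha$, cannot be resolved as stated, since absorption into $\tfrac{\alpha}{2}\|\Delta^2\Trs\|^2$ costs a factor $\alpha^{-1}$ in the Gronwall constant.

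The paper avoids all of this by performing the estimate on the conserved quantities $M(\Trs)=\tfrac12\|\Trs\|^2$ and $\|\Trs\|_{H^{-1/2}}^2$: there the transport term vanishes \emph{identically} because $\nabla\cdot\bfU=0$, so nothing is absorbed, the bi-Laplacian and $p$-Laplacian contribute exactly $2\alpha(\|\Trs\|_{H^2}^2+\|\Trs\|_{W^{1,4}}^4)$, and the It\^o correction is exactly $\alpha A_0$. This yields the identity \eqref{alp}, whose time average $\frac1t\E\int_0^t\|\Trs\|_{H^2}^2\,ds\le C$ gives tightness of the Krylov--Bogoliubov averages via compactness of $H^2$-balls in $H^{2-\delta}$; no $H^4$ bound is needed, and the support in $H^2\cap W^{1,4}$ follows from the same moment identity. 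The fact that \eqref{alp} is an equality rather than an inequality is also what later guarantees non-degeneracy of the limiting measure, so an approach that trades the advection term against the dissipation would lose essential information. For well-posedness itself the paper does not run a fixed-point scheme but verifies the locally monotone hypotheses of Liu--R\"ockner, where again the cancellation $\langle \bfU\cdot\nabla w, w\rangle=0$, not interpolation against the fourth-order dissipation, closes the argument. To repair your proof, replace the $H^2$-level test by the $L^2$-level test and exploit the exact cancellation $\int_{\TT}\Trs\,\bfU\cdot\nabla\Trs\,dx=0$.
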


The proof of global pathwise, well posedness follows from a standard framework -- Galerkin approximation and passage to the limit. Since we were not able to locate a suitable result 
in the literature,  we provide sketch of the proof with 
appropriate references. Then, the existence of the invariant measure follows from 
moment bounds on solutions and Kryloff-Bogoliouboff theorem \cite{KryloffBogoliouboff}, see also \cite{da2014stochastic}. Note that using 
standard coupling techniques, one can prove that $\mu_\alpha$ is in fact 
a unique invariant measure. For proofs in settings close to ours, we refer to 
\cite{BricmontKupiainenLefevere2001, DaPratoDebussche2003, 
FlandoliMaslowski1, FGHRT, FGHRW, 
HairerMattingly2008, KS12}

Before discussing 
 convergence properties of measures constructed in Theorem \ref{thm-met}, let us 
 summarize known results. 
Passing $\alpha \to 0$ and consecutive  analysis of limiting measure $\mu$ was done 
for Euler equation in \cite{kuk_eul_lim} and \cite{GHSV}, where
it was proved that $\mu$ is supported on 
$H^1 \cap L^\infty$. Moreover, it was proved that for any compact set $S$ with finite Hausdorff dimension, one has $\mu(S) = 0$, that is, $\mu$ is infinite dimensional. The 
crucial property that allowed to prove the infinite dimensionality was the existence 
of infinitely many conservation laws. Also, it was shown that the support of $\mu$ contains solutions with large energy. 
Analogous results were obtained for KdV, Benjamin-Ono,  
Klein-Gordon, and Schr\"odinger equation in  \cite{KS04,sybo,sykg} (see also references therein). 
It is important to notice that in all previous examples the proof of the invariance of the limiting measure $\mu$ was based on the 
well posedness of the underlying deterministic equation, which is not known for SQG equation. 
Observe that the proof of invariance 
 for Euler equation \cite{KS12} does not require global well posedness of the 
 deterministic equation, nevertheless the 2D Euler equation is significantly simpler than SQG (which resembles 3D Euler equation). 
The construction of global solutions for septic NLS \cite{sy2019sure} (not know to be globally well posed), utilizes a different strategy: 
the fluctuation-dissipation is used only 
for Galerking approximations and the main obstacle is passage to the  limit (based on an argument of Bourgain \cite{bourg94}).

On the other hand, different construction based on Gibbs measures was used to construct global solutions and invariant measures for 
various, possibly globally ill posed, Hamiltonian systems (see e.g \cite{bourg94,tzvNLS06,oh2010invariance} and references therein). 
 However, the authors of \cite{nahmod2018global} identified a serious obstruction that 
 prevent a `traditional way' (e.g. as for 2D Euler \cite{albeverio1990global}) of building a Gibbs measure based on the conservation of $L^2$ norm for the SQG equation. 
Indeed, for functions in the support of such measure,  the nonlinearity of SQG (one degree less regular than Euler)
cannot be defined in the sense of distributions.

The main novelty of the paper is the proof that the set of measures $(\mu_\alpha)$ from 
Theorem \ref{thm-met} has an accumulation point $\mu$ which is an invariant measure 
for \eqref{sqg}. The invariance is understood with respect to the dynamics
induced by the stochastic equation \eqref{sqgs} and a passage $\alpha \to 0$.  Furthermore, we prove that if the initial condition belongs to the support of $\mu$, 
the corresponding solution of \eqref{sqg} is global. Hence, it is important to estimate the size of the support of $\mu$. 
Althought the SQG is similar to 3D Euler equation, a notable difference is the existence of infinitely many conserved quantities, that allows us to 
prove that $\mu(K) = 0$ for any compact set $K$ with finite Hausdorff dimension. Also, we show that $\mu$ is not supported only on small functions, meaning that the support of $\mu$ must contain functions with arbitrarily large 
$H^{\frac{3}{2}}$ norm. The last statement follows from the fact, that we can choose 
the noise in the fluctuation dissipation method and obtain $\mu$ with large moments (see Corollary \ref{cor:ubb} below).

Before we proceed denote 
\begin{equation}\label{asn}
A_s := \sum_{j = 1}^\infty \lambda_j^{s} a_j^2 \,.
\end{equation}
and recall that regular solutions of SQG equation admit, the following set of conservation laws
\begin{align*}
E_{\frac{-1}{2}}(\Trs) &=\frac{1}{2}\int_{\TT}|(-\Delta)^\frac{-1}{4}\Trs|dx \,, \\
M(\Trs) &=\frac{1}{2}\int_{\TT} \Trs^2dx. 
\end{align*}

The next theorem contains our main results, for more general assertions see Theorems \ref{TheoremPassagetotheLimit}, \ref{theoremabsolutecontinuity},  and  \ref{theoremdimension} below.

\begin{Thm}\label{thm:main}
Assume $A_0 < \infty$. 
As $\alpha\to 0$, there is an accumulation point $\mu$ for the sequence $(\mu_\alpha)$ satisfying the following properties
\begin{enumerate}[(1)]
\item $\mu$ is a probability measure concentrated on the Sobolev space 
$H^2(\TT)$, that is,
\begin{equation}
\mu(H^2(\TT))=1.
\end{equation}
\item \label{item_flow} For $\mu$ almost all data $\Trs_0$, there is a unique function $\Trs \in C(\RR^+, H^1)\cap L^2_{loc}(\RR^+,H^2)$ satisfying the equation $\eqref{sqg}$ with $\Trs(0,x)=\Trs_0(x)$. Define a flow $\rho$ for $\eqref{sqg}$ as  $\rho_t(\Trs_0) = \Trs(\cdot, t; \Trs_0)$.  
\item The flow $\rho_t$ is continuous on $H^1$.
\item $\mu$ is invariant under $\rho_t$. 
\item $\mu$ satisfies the estimates
\begin{align}
\int_{L^2}\left(\|\Trs\|_{H^\frac{3}{2}}^2
-  \int_{\TT} | \nabla \Trs |^2 \nabla \Trs \cdot \nabla (-\Delta)^{-\frac{1}{2}} \Trs  dx 
\right) \mu(d\Trs) &=\frac{A_{\frac{-1}{2}}}{2} \,, \\
\int_{L^2} ( \|\Trs\|_{H^2}^2 + \|\Trs\|_{W^{1,4}}^4)\mu(d\Trs)<\infty. 
\end{align} 
\item \label{item_infdim}$\mu$ is infinite-dimensional  in the sense that it vanishes on finite-dimensional compact sets.
\item\label{item_absolutcont} The conservation laws of random variables  $M(\Trs)$ and $E_\frac{-1}{2}(\Trs)$  are absolutely continuous with respect to the Lebesgue measure on $\RR$.
\end{enumerate}
\end{Thm}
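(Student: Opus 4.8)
The plan is to realize $\mu$ as a weak accumulation point of the stationary measures $\mu_\alpha$ from Theorem~\ref{thm-met} and to transfer their uniform estimates and invariance to the limit. The common engine behind every part is the family of stationary balance laws obtained by applying the It\^o formula to conserved functionals along \eqref{sqgs} and using $\mu_\alpha$-stationarity, i.e. $\int\mathcal{L}^\alpha F\,d\mu_\alpha=0$ for the generator $\mathcal{L}^\alpha$.

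For parts (1) and (5) I would first apply It\^o to $M(\Trs)$ and to $E_{\frac{-1}{2}}(\Trs)$. Because both are conserved by the transport nonlinearity $\bfU\cdot\nabla\Trs$ (as $\bfU$ is divergence free), the transport part of $\mathcal{L}^\alpha$ annihilates them and the only surviving contributions come from the dissipation $-\al\Delta^2\Trs$, the regularizing flux $\al\nabla(|\nabla\Trs|^2\nabla\Trs)$, and the It\^o correction of $\sqrt\al\,d\eta$, each carrying exactly one factor of $\al$. Dividing by $\al$ yields the $\al$-independent identities
\[\int(\|\Trs\|_{H^2}^2+\|\Trs\|_{W^{1,4}}^4)\,\mu_\alpha(d\Trs)=\tfrac{A_0}{2},\]
together with the analogous $H^{3/2}$ identity carrying the cubic flux and right-hand side $\tfrac{A_{\frac{-1}{2}}}{2}$. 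The first gives an $H^2$–$W^{1,4}$ moment bound uniform in $\al$; by the compact embedding $H^2\hookrightarrow H^{2-\delta}$ and Prokhorov, $(\mu_\alpha)$ is tight, a subsequence converges weakly to $\mu$, and lower semicontinuity of the norms upgrades the bound to $\mu$, giving (1) and the inequality in (5). For the equality in (5) I would pass to the limit using the uniform $H^2$–$W^{1,4}$ bound as a source of uniform integrability for the $H^{3/2}$ term and the cubic flux.

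The heart of the argument is (2)–(4). I would first record a local well-posedness statement for \eqref{sqg} in $H^2$, with continuous dependence and a blow-up criterion, together with a difference estimate showing that any two solutions in $C([0,T],H^1)\cap L^2(0,T;H^2)$ with the same datum coincide and that the solution map is Lipschitz in $H^1$ on bounded sets of $L^2_tH^2$; this is exactly where the $H^2$ regularity (hence the bi-Laplacian rather than the Laplacian) is indispensable, since it controls the drift well enough to close a Gronwall estimate on the $L^2$ norm of the difference. Then I would compare, on a fixed interval $[0,T]$ and for fixed datum, the stochastic flow $\Phi^\alpha_t$ of \eqref{sqgs} with the deterministic flow $\rho_t$ of \eqref{sqg}, showing $\Phi^\alpha_t\to\rho_t$ as $\al\to0$ in a mode strong enough (convergence in probability in $C_tH^1$, with uniform integrability from the $H^2$ bounds) to pass limits under $\int\cdot\,\mu_\alpha(d\Trs)$. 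Combining the invariance $\int\E[f\circ\Phi^\alpha_t]\,d\mu_\alpha=\int f\,d\mu_\alpha$ with $\mu_\alpha\rightharpoonup\mu$ identifies $\int f\circ\rho_t\,d\mu=\int f\,d\mu$ for bounded continuous $f$, which is (4), and simultaneously forces the blow-up set to be $\mu$-null on each $[0,T]$; intersecting over $T\in\mathbb{N}$ gives global solutions for $\mu$-a.e.\ datum, hence (2), while the $H^1$-Lipschitz bound gives (3). I expect this convergence $\Phi^\alpha\to\rho$, uniform enough in the datum to survive integration against $\mu_\alpha$, to be the main obstacle, since in the limit the regularizing dissipation disappears and one is left with only the borderline $C_tH^1\cap L^2_tH^2$ control for a critical nonlinearity.

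For (7) I would again use stationarity, now testing against $\varphi(M(\Trs))$ and $\varphi(E_{\frac{-1}{2}}(\Trs))$ with $\varphi\in C_c^2(\RR)$. Since $M$ and $E_{\frac{-1}{2}}$ are transport-invariant, the transport part annihilates $\varphi\circ M$, so the identity $\int\mathcal{L}^\alpha(\varphi\circ M)\,d\mu_\alpha=0$ contains only an $O(\al)$ drift and the $O(\al)$ quadratic variation $\al\sum_j a_j^2\langle\Trs,e_j\rangle^2$; dividing by $\al$ and letting $\al\to0$ produces an $\al$-independent stationary Fokker--Planck identity for $\nu=M_*\mu$ on $\RR$ with strictly positive effective diffusion when $A_0>0$, which excludes atoms and singular parts and yields a density (the weight $\lambda_j^{-1/2}$ handles $E_{\frac{-1}{2}}$). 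Finally, for (6) I would exploit the infinitely many Casimirs $\Trs\mapsto\int_{\TT}g(\Trs)\,dx$, which like $M$ are transport-conserved. Given a compact set $K$ of finite Hausdorff dimension $d$, fix $m>d$ and Lipschitz functions $g_1,\dots,g_m$ and set $\Psi=(\int g_1(\Trs),\dots,\int g_m(\Trs))$; repeating the Fokker--Planck argument for the vector-valued conserved quantity $\Psi$ shows $\Psi_*\mu\ll\mathrm{Leb}_m$ provided the diffusion matrix $\sum_j a_j^2(\int g_i'(\Trs)e_j)(\int g_k'(\Trs)e_j)$ is $\mu$-a.s.\ nondegenerate. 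Since $\Psi$ is Lipschitz from $(K,\|\cdot\|_{L^2})$ into $\RR^m$, the image $\Psi(K)$ has dimension at most $d<m$, hence is Lebesgue-null, and $\mu(K)\le\Psi_*\mu(\Psi(K))=0$ gives (6). The delicate point here is arranging the joint nondegeneracy of the diffusion matrix for sufficiently many independent Casimirs.
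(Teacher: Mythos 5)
Your treatment of parts (1), (5), (6), (7) follows essentially the paper's route (stationary balance laws from It\^o plus stationarity, tightness in $H^{2-\delta}$, Krylov-type nondegeneracy of the diffusion matrix for a family of Casimirs, and the Lipschitz-image argument against Hausdorff dimension), and those portions are sound modulo the usual care with integrating the non-sign-definite term $\langle \Delta^2\Trs, g'(\Trs)\rangle$ against $\mu_\alpha$, which is precisely what the $p$-Laplacian correction and the $W^{1,4}$ moments are there for.

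The genuine gap is in your plan for (2)--(4). You propose to fix a datum and show that the stochastic flow of \eqref{sqgs} converges to the deterministic flow $\rho_t$ of \eqref{sqg} on $[0,T]$. But $\rho_t$ is not known to exist: global well-posedness of SQG is open, and even local solutions from $H^2$ data are not known to stay in $L^2_tH^2$, which is exactly the regularity your uniqueness and continuity estimates require. So the object you want to converge to is not available, and the claim that the comparison ``simultaneously forces the blow-up set to be $\mu$-null'' is circular -- you cannot run a difference estimate against a solution whose existence (and a priori $L^2_tH^2$ bound) is what you are trying to prove. Moreover, even granting a candidate uncountable family of null sets $M_T$ of bad data for each $T$, one per time, does not obviously union to a null set without additional structure. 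The paper circumvents both issues by lifting $\mu_\alpha$ to measures $\nu_\alpha$ on path space $C_tH^{-\delta}\cap L^2_tH^{2-\delta}$, proving tightness there from temporal regularity estimates on the four terms of the Duhamel decomposition, invoking Skorokhod to get almost surely convergent trajectories, and identifying the limit trajectory as a solution of \eqref{sqg} lying in $C_tH^1\cap L^2_tH^2\cap H^1_tL^2$; only then does uniqueness enter, applied to an already-constructed global solution. A second, smaller gap: at the regularity $L^2_tH^2$ the difference estimate is borderline ($\nabla\Trs_1\in L^2_tH^1$ is not in $L^1_tL^\infty$), so a direct Gronwall argument does not close and the solution map is not Lipschitz; one needs the Yudovich/Osgood device with $p$-dependent Sobolev constants, $\frac{d}{dt}\|w\|^2\lesssim p^{1/2}\|w\|^{2\delta}(1+\|\Trs_1\|_{H^2}^2+\|\Trs_2\|_{H^2}^2)$ with $\delta=\frac{3-p}{2}\to 1$, which yields uniqueness and mere continuity of $\rho_t$ on $H^1$, not a Lipschitz bound.
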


The regularity of functions in the support of $\mu$ (being $H^2$ by Theorem \ref{thm:main}, 1.) is a direct consequence of the support of $\mu_\alpha$, which follows from the 
regularizing term $\Delta^2$. Note that $L^2H^2$ is a minimal smoothness required to prove uniqueness of solutions, that is, well posedness claimed in Theorem \ref{thm:main} part 
(2). 
Replacing $\Delta^2$ by $\Delta^\beta$ with $\beta < 2$, yields the existence of $(\mu_\alpha)$ and $\mu$ supported on $H^\beta$, but solutions 
of \eqref{sqg} with initial conditions in the support of $\mu$ are too weak to establish uniqueness.  

On the other hand the choice of $\Delta^2$ instead of $\Delta$ brings several obstacles. For example, 
for the proof of part (6) in Theorem \ref{thm:main} we needed to
introduce additional smoothing term $\Delta_4 \Trs := \nabla(|\nabla \Trs|^2 \nabla \Trs)$ into 
\eqref{sqgs} ($\Delta_p \Trs := \nabla(|\nabla \Trs|^{p - 2} \nabla \Trs)$ is called 
$p$-Laplacian). The reason is that the expression $\langle \Delta^2 \Trs, f(\Trs) \rangle$ is neither positive nor bounded from below
for all $\Trs$, and for large set of functions $f$, a minimal requirement for the general framework, see details in Section \ref{section_QualProperties}.  
The addition  of $\Delta_4$ guarantees that $(\mu_\alpha)$, and consequently $\mu$ are supported also on $W^{1, 4}$ with forth order moment bounds. 
Then, we can bound $\langle \Delta^2  \Trs - \Delta_4 \Trs, f(\Trs) \rangle$ from below for any $f$ that has bounded derivatives up to fourth order, which suffices 
for our purposes. 

If one wishes to construct invariant measures for \eqref{sqg} on smoother spaces, for example $H^\beta$ for $\beta > 2$, and prove infinite dimensionality 
of such measure, then one has to correct the smoothing operator $\Delta^\beta$ by appropriate quasilinear positive definite operator such as $p$-Laplacian. 
Since the proofs are technically involved we decided not to present them here.  

 Concerning the support of $\mu$, by part \ref{item_infdim} of Theorem \ref{thm:main}, it cannot be contained in any compact set of finite Hausdorff dimension. Moreover, $\ref{item_absolutcont}$ implies, that the support of $\mu$ is not merely a countable union of level sets of the conservation laws. Also, the following corollary asserts that there are arbitrarily large initial data that give rise to 
 global solutions.

 \begin{Cor}\label{cor:ubb}
 Given any constant $K$, denote $S_K = \{\Trs:  \|\Trs\|_{W^{1, 4}}^4 + \|\Trs\|_{H^{\frac{3}{2}}}^2 \geq K\}$. Then there is $\Trs_0 \in S_K$ such that  the solution
 of \eqref{sqg} with $\Trs(0) = \Trs_0$ is global. More generally, there exists a sequence $(a_j)$ (see the definition of the noise \eqref{dfns}) such that for the measure $\mu$
 constructed in Theorem \ref{thm:main} one has $\mu(S_K) > 0$. 
 \end{Cor}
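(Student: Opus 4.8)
The plan is to exploit the freedom in choosing the noise coefficients $(a_j)$ together with the balance identity in part (5) of Theorem \ref{thm:main}. The key observation is that $A_{\frac{-1}{2}} = \sum_j \lambda_j^{-\frac{1}{2}} a_j^2$ can be made arbitrarily large without violating $A_0 < \infty$: replacing a fixed admissible sequence $(a_j)$ by $(c\, a_j)$ scales $A_0 \mapsto c^2 A_0$ and $A_{\frac{-1}{2}} \mapsto c^2 A_{\frac{-1}{2}}$, so letting $c \to \infty$ drives $A_{\frac{-1}{2}}$ to infinity while $A_0$ stays finite. Each such choice yields, through Theorem \ref{thm:main}, an accumulation measure $\mu = \mu^{(c)}$ obeying (5) with the corresponding large value of $A_{\frac{-1}{2}}$.

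First I would bound the nonlinear term in the first identity of part (5) by the $W^{1,4}$ norm. Writing $\mathcal{N}(\Trs) := \int_{\TT} |\nabla \Trs|^2 \nabla \Trs \cdot \nabla(-\Delta)^{-\frac{1}{2}}\Trs\, dx$ and applying H\"older's inequality with exponents $\frac{4}{3}$ and $4$ gives
$$|\mathcal{N}(\Trs)| \leq \|\nabla \Trs\|_{L^4}^3 \, \|\nabla(-\Delta)^{-\frac{1}{2}}\Trs\|_{L^4}.$$
Since $\nabla(-\Delta)^{-\frac{1}{2}}$ is a matrix of Riesz transforms and hence bounded on $L^4(\TT)$, we have $\|\nabla(-\Delta)^{-\frac{1}{2}}\Trs\|_{L^4} \leq C \|\Trs\|_{L^4}$, whence $|\mathcal{N}(\Trs)| \leq C \|\Trs\|_{W^{1,4}}^4$ for an absolute constant $C$ independent of the noise.

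Next I would argue by contradiction. Suppose $\mu(S_K) = 0$, i.e.\ $\mu$-almost surely $\|\Trs\|_{W^{1,4}}^4 + \|\Trs\|_{H^{\frac{3}{2}}}^2 < K$. Then $\int_{L^2} \|\Trs\|_{H^{\frac{3}{2}}}^2\, \mu(d\Trs) \leq K$ and $\int_{L^2}\|\Trs\|_{W^{1,4}}^4\, \mu(d\Trs) \leq K$. Rearranging the first identity of part (5) and inserting the bound on $\mathcal{N}$,
$$\frac{A_{\frac{-1}{2}}}{2} = \int_{L^2}\|\Trs\|_{H^{\frac{3}{2}}}^2\, \mu(d\Trs) - \int_{L^2}\mathcal{N}(\Trs)\,\mu(d\Trs) \leq K + C K = (1+C)K,$$
where I used $-\mathcal{N} \le |\mathcal{N}|$ so that the sign of the nonlinear term is irrelevant. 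Choosing $c$ so large that $A_{\frac{-1}{2}} > 2(1+C)K$ produces a contradiction, hence $\mu(S_K) > 0$ for that noise, which is the ``more generally'' statement. The first assertion then follows at once: since $\mu(S_K) > 0$ and, by part (2) of Theorem \ref{thm:main}, $\mu$-almost every $\Trs_0$ launches a global solution of \eqref{sqg}, there must exist $\Trs_0 \in S_K$ whose solution is global.

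The only delicate point is that the constant $C$ estimating $\mathcal{N}$ must be genuinely independent of the noise, and hence of $\mu$; this is guaranteed because it arises solely from H\"older's inequality and the $L^4$-boundedness of the Riesz transform. Everything else is a scaling argument combined with the a priori bound (5), so I do not expect a substantive obstacle here.
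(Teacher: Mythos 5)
Your proposal is correct and follows essentially the same route as the paper: both argue by contradiction from the balance identity in part (5) of Theorem \ref{thm:main}, bound the nonlinear term by $C\|\Trs\|_{W^{1,4}}^4$ via H\"older and the $L^4$-boundedness of the Riesz transform, and then choose the noise coefficients so that $A_{-\frac{1}{2}}$ exceeds the resulting bound (the paper simply prescribes $A_{-\frac{1}{2}}=4CK$ where you rescale a fixed sequence by $c$, which is the same freedom). The observation that the H\"older/Riesz constant is independent of the noise, and hence of $\mu$, is exactly the point that makes the argument close.
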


\begin{proof}[Proof of Corollary \ref{cor:ubb}]
Choose the sequence $(a_j)$ such that $A_{-\frac{1}{2}} = 4 CK$ and $A_0 < \infty$, where $C$ is a constant depending only on the size of $\TT$.
 If $\mu (S_K) = 0$,  then, by Theorem \ref{thm:main}, (5) one has by H\" older and Poincar\' e inequalities
\begin{align}
2CK = \frac{A_{-\frac{1}{2}}}{2} &\leq \int_{L^2}\left(  \|\Trs\|^2_{H^{\frac{3}{2}}} + \int_{\TT} |\nabla \Trs|^3 |\nabla (- \Delta)^{\frac{1}{2}} \Trs| dx \right) \mu(d\Trs )
\leq  \int_{L^2}  \|\Trs\|^2_{H^{\frac{3}{2}}} + \|\nabla \Trs\|_{W^{1, 4}}^3 \|\Trs\|_{L^4}  \mu(d\Trs )\\
&\leq C \int_{L^2}  \|\Trs\|^2_{H^{\frac{3}{2}}} + \|\nabla \Trs\|_{W^{1, 4}}^4  \mu(d\Trs ) = 
  C \int_{L^2 \setminus S_K}  \|\Trs\|^2_{H^{\frac{3}{2}}} + \|\nabla \Trs\|_{W^{1, 4}}^4  \mu(d\Trs ) \leq C K \,,
\end{align}
a contradiction. The first statement follows from the second one and Theorem \ref{thm:main} parts (1), (2).
\end{proof}

Another consequence of Theorem \ref{thm:main} follows from the Poincar\'e recurrence theorem.

\begin{Cor}\label{cor:prt}
For $\mu$-almost every $u_0\in H^2$, there is a sequence $(t_k)_k$ increasing to $\infty$  such that
\begin{equation}
\lim_{k\to\infty}\|\rho_{t_k}u_0-u_0\|_2=0.
\end{equation}
\end{Cor}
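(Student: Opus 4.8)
The plan is to deduce the statement from the classical Poincar\'e recurrence theorem, applied to the time-one map of the flow $\rho_t$, using that $\mu$ is $\rho_t$-invariant and that $L^2(\TT)$ is separable. First I would record that, by parts (2)--(4) of Theorem \ref{thm:main}, there is a Borel set $\mathcal{G} \subset H^2(\TT)$ with $\mu(\mathcal{G}) = 1$, invariant under the semiflow (so that all integer iterates are defined on $\mathcal{G}$), on which the time-one map $T := \rho_1$ is measurable (being continuous in the $H^1$, hence $L^2$, topology) and measure preserving, i.e. $(\rho_1)_* \mu = \mu$. Thus $(\mathcal{G}, \mathcal{B}, \mu, T)$ is a measure-preserving system on a separable metric space equipped with the $L^2$ distance, and invertibility of $T$ is neither available nor needed.

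Next I would set up a countable generating family of balls. Fix a countable dense subset $\{v_i\}_{i \in \mathbb{N}}$ of $L^2(\TT)$ and, for $i, n \in \mathbb{N}$, set $B_{i,n} = \{u : \|u - v_i\|_2 < 1/n\}$. Let $\mathcal{P}$ be the subfamily of those $B_{i,n}$ with $\mu(B_{i,n}) > 0$. For each $B \in \mathcal{P}$, the Poincar\'e recurrence theorem yields a null set $N_B$ such that every $u_0 \in B \setminus N_B$ satisfies $T^m u_0 = \rho_m u_0 \in B$ for infinitely many integers $m$. Setting $N_1 = \bigcup_{B \in \mathcal{P}} N_B$, countability of $\mathcal{P}$ gives $\mu(N_1) = 0$. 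To ensure that every surviving point sees each scale through a ball of positive measure, I would also discard $Z = \bigcup_n Z_n$ with $Z_n = \bigcup\{B_{i,n} : \mu(B_{i,n}) = 0\}$; as a countable union of null sets, $\mu(Z) = 0$. For $u_0 \notin Z$ and any $n$, density provides an index $i$ with $\|u_0 - v_i\|_2 < 1/n$, and since $u_0 \notin Z_n$ the corresponding ball $B_{i,n}$ belongs to $\mathcal{P}$.

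Finally I would combine these ingredients. Fix $u_0 \in \mathcal{G} \setminus (N_1 \cup Z)$, a set of full $\mu$-measure. For each $n$ choose, as above, a ball $B_{i,n} \in \mathcal{P}$ containing $u_0$. Because $u_0 \notin N_1$, there are infinitely many integers $m$ with $\rho_m u_0 \in B_{i,n}$; select one such $m_n > n$. Then
\begin{equation}
\|\rho_{m_n} u_0 - u_0\|_2 \le \|\rho_{m_n} u_0 - v_i\|_2 + \|v_i - u_0\|_2 < \frac{2}{n} \,.
\end{equation}
Since $m_n > n \to \infty$, extracting a strictly increasing subsequence $(t_k)_k$ from $(m_n)_n$ produces times $t_k \to \infty$ with $\|\rho_{t_k} u_0 - u_0\|_2 \to 0$, which is the asserted recurrence.

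I expect the only genuine (and rather minor) obstacle to be the verification of the abstract recurrence hypotheses in this infinite-dimensional, non-invertible setting: namely the existence of a full-measure, forward-invariant Borel set $\mathcal{G}$ on which $T = \rho_1$ is a bona fide measurable, measure-preserving self-map. This is precisely where parts (2)--(4) of Theorem \ref{thm:main} are indispensable, since the flow is only defined $\mu$-almost surely and its regularity is controlled in $H^1$ rather than in $H^2$.
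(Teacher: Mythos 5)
Your argument is correct and is precisely the standard way to make rigorous the paper's one-line justification, which simply invokes the Poincar\'e recurrence theorem applied to the $\rho_t$-invariant measure $\mu$; the reduction to the time-one map on a full-measure forward-invariant set, the countable cover by positive-measure balls at every scale in the separable space $L^2(\TT)$, and the triangle-inequality conclusion are exactly the expected details. No gaps.
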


Corollary \eqref{cor:prt}
could be the reason why the estimate \eqref{kni} requires $\sum_{t \leq T}$, that is, the solution increases only along the sequence of times (solutions might return 
infinitely many times to a neighbourhood of 
the initial condition).

A natural question is  whether  the set of solutions  constructed in Theorem \ref{thm:main}  is a subset of equilibria to \eqref{sqg}. This seems to be a 
very non-trivial question to which we do not have a definitive answer. However, we have the following alternative:
\begin{itemize}
\item[(a)] The support of $\mu$ is not a subset of the equilibria of \eqref{sqg}, and then the flow $\rho_t$ constructed in Theorem 
\ref{thm:main}, part $\ref{item_flow}$ yields non-trivial global solutions. 
\item[(b)] The support of $\mu$ coincides with the equilibria of $\eqref{sqg}$, for 
any choice of (sufficiently regular) noise. In that case, we would have a remarkable stability property of the equilibria for both \eqref{sqg} and \eqref{sqgs} with small $\alpha$.\
\end{itemize} 
Recall that the linear stability of equilibria of \eqref{sqg} were studied in 
\cite{Friedlander2005}.

Let us remark that in the context of equations having only discrete set of equilibria, for instance the case of some power type nonlinearities, 
$\ref{item_infdim}$ and $\ref{item_absolutcont}$ of Theorem \ref{thm:main}  imply that   the alternative (a) above occurs. Also, we propose in the appendix a general example of 
a finite-dimensional system having continuous set of equilibria, but the support of the 
inviscid measure not being subset of equilibria. 

\smallskip

\paragraph{\bfseries Organization of the paper.}
In Section $\ref{section_GWP}$, we prove probabilistic global well-posedness for the stochastic equation $\eqref{sqgs}$. Moment bounds for such solutions are given in Section $\ref{section_ProbaEstimates}$ and based on moment bounds we construct  stationary measures for any $\alpha > 0$ in Section $\ref{section_StationMeasures}$. Section $\ref{section_InviscidLimit}$ contains principal results of the paper, and we prove there the existence of invariant measure for \eqref{sqg}, and global well posedness on its support. In 
Section $\ref{section_QualProperties}$, we combine the probabilistic estimates and Krylov lemma to establish qualitative properties (infinite dimensionality of the support). Finally, Appendix \ref{sec:ham} include details about the invariant measures for 
finite dimensional Hamiltonian systems and in Appendices \ref{sec:Ito} and \ref{sec:imbed} we respectively recall It\^ o formula in infinite dimensions and a
proof of a parabolic embedding.  
  
\subsection{General notations}  
The following notation is used throughout the paper.
\begin{itemize}[label = $\ast$]
\item $C^\infty_0(\RR)$ is the space of functions $f:\RR \to \RR$ that are infinitely differentiable and compactly supported.
\item For any $1 \leq p \leq \infty$ and $s \in \RR$, we denote $L^p(\TT)$ and $W^{s, p}(\TT)$ the usual Lebesgue respectively Sobolev spaces.  We also set 
$H^s (\TT) = W^{s, p}(\TT)$. Often for the clarity of presentation we do not indicate the domain $\TT$ and we write $L^p$, $W^{k, p}$, and $H^k$. 
\item When a fixed $T>0$ is clear from context, for any Banach space $X$ define the spaces $CX = C([0,T],X)$,  $L^p X=L^p([0,T],X)$, and $W^{s, p}X = W^{s, p} ([0,T],X)$. Sometime if needed, we 
indicate the variable of the space as a subscript, for example $C_t H^s_x$. The spaces are equipped with usual parabolic norms denoted for example $\|\cdot \|_{CX}$. 
\item We write $\|\cdot\|$ instead of $\|\cdot\|_{L^2(\TT)}$. 
\item $C_{\textrm{loc}}X$ denotes the space of functions that are locally continuous in time with values in $X$. Analogously we define $L^p_{\textrm{loc}} X$ and 
$W^{s, p}_{\textrm{loc}} X$.
\item We denote $\mathfrak{p}(X)$ the set of Borel probability measures on $X$. 
\item The non-decreasing sequence $(\lambda_m)_{m \geq 1}$ contains all eigenvalues of $-\Delta$ on $\TT$ and with corresponding normalized eigenvectors $(e_m)_{m \geq 1}$.
\item For a probability measure $\mu$ on $X$, we denote by $\E_{\mu}(f)$, the average of $f$ with respect to $\mu$: 
$$
\int_Xf(x)\mu(dx).
$$
\item The Riesz transform of $\Trs$ is given denoted $R^\perp\Trs=(-\partial_y,\partial_x)(-\Delta)^{-\frac{1}{2}}\Trs.$ Note that
Riesz transform satisfies (see \cite{stein1993harmonic}), for any $p\in[1,\infty)$
\begin{equation}\label{rtd}
\|R^\perp v\|_{L^p} \leq C \|v\|_{L^p} \,.
\end{equation}
\end{itemize}

\section{Global solutions for the stochastic SQG}\label{section_GWP}

In this section we establish the path-wise global well posedness of solutions of \eqref{sqgs}, and therefore 
prove the first part of Theorem \ref{thm-met}. Although the proof 
follows from a framework used several times in the literature, we were unable to locate the precise reference that 
would cover our situation. Rather than providing all details, we show how to satisfy assumptions of  
 \cite[Theorem 1.3]{Liu-Rockner} and explain how the proof in \cite{Liu-Rockner} needs to be modified. 

\begin{Thm}\label{TheoremGWP}
Fix any $\alpha > 0$, any $T > 0$, and any $p \geq 1$.  
Also, fix
any $\mathcal{F}_0$ measurable (see filtration for our Brownian motion random variable $\Trs_0$ 
 with $\E \|\Trs_0\|_{L^2}^p < \infty$, 
and any noise $\eta$ of the form \eqref{dfns} with $A_0 < \infty$. 
Then, there exists a unique adapted solution $\Trs$
of \eqref{sqgs} satisfying $\Trs(0) = \Trs_0$ and almost surely
\begin{equation}
\Trs \in C ([0, T], L^2(\TT))  \cap L^2 ([0, T], H^{2}(\TT)) 
\cap L^{4}([0, T], W^{1, 4}(\TT))  \,.
\end{equation}
Furthermore, 
\begin{equation}\label{ltn}
\E \sup_{t \in [0, T]}\|\Trs(t)\|^{2p} +
\alpha \E \int_0^T \|\Trs\|^{2p- 2} (\|\Trs\|^2_{H^2} + \|\Trs\|_{W^{1, 4}}^4) ds \leq 
C(T, \alpha, p, \|\Trs_0\|) \,.
\end{equation}
\end{Thm}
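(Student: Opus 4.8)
The plan is to cast \eqref{sqgs} as an abstract stochastic evolution equation in the variational (Gelfand triple) framework and to verify the hypotheses of \cite[Theorem 1.3]{Liu-Rockner}. Set $H = L^2(\TT)$ (mean-zero functions), $V = H^2(\TT)\cap W^{1,4}(\TT)$ and $V^*$ its dual, so that $V\hookrightarrow H\cong H^*\hookrightarrow V^*$, and write the drift as
\[
A(\Trs) = -\bfU\cdot\nabla\Trs - \al\Delta^2\Trs + \al\Delta_4\Trs, \qquad \bfU = R^\perp\Trs, \quad \Delta_4\Trs = \nabla(|\nabla\Trs|^2\nabla\Trs),
\]
with additive diffusion $B\equiv\sqrt{\al}\,\eta$ of finite Hilbert--Schmidt norm $\|B\|_{L_2}^2 = \al A_0 < \infty$. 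I would then check the four structural conditions. \emph{Coercivity} is the cleanest point: since $\bfU$ is divergence free the transport term is skew-symmetric and $\langle\bfU\cdot\nabla\Trs,\Trs\rangle = 0$, whence
\[
2\langle A(\Trs),\Trs\rangle + \|B\|_{L_2}^2 = -2\al\|\Delta\Trs\|^2 - 2\al\|\nabla\Trs\|_{L^4}^4 + \al A_0,
\]
which controls $\|\Trs\|_{H^2}^2$ and $\|\Trs\|_{W^{1,4}}^4$ via the zero-mean Poincar\'e inequality. \emph{Growth} of each term in the appropriate dual norm is routine, using $H^2(\TT)\hookrightarrow W^{1,4}(\TT)$ in two dimensions for the $4$-Laplacian and linearity for $\Delta^2$.

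The heart of the matter is \emph{local monotonicity}, where the advection term must be dominated by the bi-Laplacian dissipation. For $u,v\in V$ with $w = u-v$ and $\bfU_w = R^\perp w$, the transport difference splits as $\bfU_w\cdot\nabla u + \bfU_v\cdot\nabla w$; testing against $w$, the second piece vanishes by skew-symmetry, while for the first I would use the Riesz bound \eqref{rtd}, H\"older, the two-dimensional Ladyzhenskaya inequality $\|w\|_{L^4}^2\le C\|w\|\,\|\nabla w\|$, and interpolation $\|\nabla w\|\le C\|w\|^{1/2}\|w\|_{H^2}^{1/2}$ to obtain
\[
\bigl|\langle\bfU_w\cdot\nabla u, w\rangle\bigr| \le C\|w\|_{L^4}^2\|u\|_{H^2} \le C\|w\|^{3/2}\|w\|_{H^2}^{1/2}\|u\|_{H^2} \le \al\|\Delta w\|^2 + C_\al\bigl(1 + \|u\|_{H^2}^{4/3}\bigr)\|w\|^2.
\]
Combined with the monotonicity of the $4$-Laplacian, $\langle\Delta_4 u - \Delta_4 v, w\rangle\le 0$, this yields $2\langle A(u)-A(v),w\rangle\le -\al\|\Delta w\|^2 + (C + \rho(u))\|w\|^2$ with $\rho(u) = C_\al\|u\|_{H^2}^{4/3}$ bounded on balls of $V$, i.e.\ exactly the local monotonicity hypothesis; \emph{hemicontinuity} is immediate from the polynomial structure. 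Existence, uniqueness and adaptedness of $\Trs\in C_tL^2\cap L^2_tH^2\cap L^4_tW^{1,4}$ then follow from the Galerkin scheme, uniform a priori bounds, and a Minty--Browder argument as in \cite{Liu-Rockner}; the only modification is bookkeeping two coercivity exponents ($2$ for $\Delta^2$ and $4$ for $\Delta_4$) in place of one, which affects none of the limiting steps. Pathwise uniqueness follows since the additive noise cancels and Gronwall applied to $d\|w\|^2\le (C+\rho(u)+\rho(v))\|w\|^2\,dt$ forces $w\equiv 0$, the coefficient lying in $L^1([0,T])$ because $\rho(u)+\rho(v)\lesssim \|u\|_{H^2}^{4/3}+\|v\|_{H^2}^{4/3}\in L^1_t$.

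For the moment bound \eqref{ltn} I would argue separately, applying the infinite-dimensional It\^o formula (Appendix \ref{sec:Ito}) to $M^p$ with $M = \|\Trs\|^2$. Using the coercivity identity for $dM$ and the estimate $\sum_j a_j^2\langle\Trs,e_j\rangle^2\le A_0\|\Trs\|^2$ for the quadratic variation of the martingale part $N$, one gets
\[
dM^p + 2p\al M^{p-1}\bigl(\|\Delta\Trs\|^2 + \|\nabla\Trs\|_{L^4}^4\bigr)\,dt \le C_{p}\al A_0 M^{p-1}\,dt + p M^{p-1}\,dN .
\]
Taking expectations (after localization), using $M^{p-1}\le M^p + C$ and Gronwall bounds $\E M^p(t)$ and hence $\al\E\int_0^T M^{p-1}(\|\Trs\|_{H^2}^2+\|\Trs\|_{W^{1,4}}^4)\,dt$; the supremum estimate then follows from Burkholder--Davis--Gundy applied to $\int_0^\cdot pM^{p-1}\,dN$, since $\langle N\rangle'\le 4\al A_0 M$ lets one absorb the running maximum of the stochastic integral into $\tfrac12\E\sup_t M^p$ plus the already-controlled dissipation term.

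The step I expect to be the main obstacle is the local monotonicity estimate for the SQG advection, i.e.\ confirming that the nonmonotone transport nonlinearity is genuinely subordinate to the $\al\Delta^2$ dissipation with a coefficient bounded on balls of $V$. The delicate point is that SQG is one derivative rougher than 2D Euler, so the interpolation budget is tight and one must exploit the full $H^2$ (rather than $H^1$) control together with the Riesz $L^p$ bounds to close the estimate. Reconciling the mixed-exponent coercivity with the single-exponent formulation of \cite{Liu-Rockner} is the secondary technical point that requires the announced modification.
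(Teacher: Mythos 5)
Your proposal is correct and follows essentially the same route as the paper: both cast \eqref{sqgs} in the locally monotone variational framework of Liu--R\"ockner, exploit skew-symmetry of the transport term for coercivity, use monotonicity of the $4$-Laplacian, and close local monotonicity by absorbing the surviving advection term $\langle R^\perp w\cdot\nabla\Trs, w\rangle$ into $-\al\|w\|_{H^2}^2$ via Young's inequality. The only (immaterial) differences are that the paper uses $H^2\hookrightarrow L^\infty$ to get $\rho(v)=\|\nabla v\|_{L^2}^2$ where you use Ladyzhenskaya plus interpolation to get $\rho(u)=C\|u\|_{H^2}^{4/3}$, and that the paper delegates the It\^o/BDG argument for \eqref{ltn} to \cite[Lemma 2.2]{Liu-Rockner} while you write it out.
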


\begin{proof}
The proof closely follows the proof of \cite[Theorem 1.3]{Liu-Rockner}, see also 
\cite{Liu-Rockner2}. 
 However, since our differential 
operators have different scalings, we have to slightly modify the  arguments. 
We only highlight differences. 
For easier comparison, we use the notation form \cite{Liu-Rockner}.

Recall that $(e_n)_{n \geq 1}$ is an orthonormal basis of $L^2$ and denote 
$H_n = \textrm{span}\{e_1, \cdots, e_n\}$. Let 
$P_n : H^{-2} \to H_n$ be the orthogonal projection defined by 
\begin{equation}
P_n y = \sum_{i = 1}^n \langle y, e_i\rangle e_i, \qquad y \in H^{-2} \,.
\end{equation}
For each $n \geq 1$ consider the stochastic equation on $H_n$
\begin{equation}\label{fds}
dX^{(n)} = P_n (A X^{(n)}) dt + \sqrt{\alpha} d\eta_n \,, \qquad 
X_0^{(n)} = P_n \Trs_0 
\end{equation}
where 
\begin{equation}
AX = - \alpha \Delta^2 X + \alpha \nabla (|\nabla X|\nabla X) - Y \nabla X, 
\qquad Y = R^\perp X
\end{equation}
and 
\begin{equation}
\eta_n(x, t) = \sum_{j = 1}^n a_j e_j(x) W_j(t) \,.
\end{equation}
The existence and uniqueness of solutions of \eqref{fds} is classical and 
follows from  \cite[Section 1]{Krylov-Rozovskii}, see also \cite[Theorem 3.1.1]{Prevot-Rockner}. 

We have the following a priori estimates for $X^{(n)}$.

\begin{Lem}\label{l:coe}
For every $T > 0$ there exists $C_T$ depending on $A_0$, $p$ and $\alpha$, but
independent of $n$ such that for
each $n \geq 1$
\begin{equation}
\E \sup_{t \in [0, T]} \|X^{(n)}(t)\|_{L^2}^{2p} + 
 \int_0^t  \|X^{(n)}(t)\|_{L^2}^{2p-2}
( \|X^{(n)}\|_{H^2}^2 + \|X^{(n)}\|_{W^{1,4}}^4) ds \\
\leq  C_T (\E \|X^{(n)}(0)\|_{L^2}^{2p} + 1) \,.
\end{equation}
\end{Lem}

\begin{proof}[Proof of Lemma \ref{l:coe}]
The proof follows from \cite[Lemma 2.2]{Liu-Rockner}, see also proof of \eqref{cet}
below for the idea of the proof.
\end{proof}

Define the spaces
\begin{equation}
Y_1 :=  L^2([0, T] \times \Omega, H^2) \qquad 
Y_2 := L^4([0, T] \times \Omega, W^{1, 4})
\end{equation}
and
\begin{equation}
K = Y_1 \cap Y_2
\end{equation}
and the dual of $K$
\begin{equation}
K^* = Y_1^* + Y_2^*
\end{equation}
equipped with the usual intersection and sum norms
\begin{equation}\label{sxn}
\|X\|_{K} = \max\{\|X\|_{Y_1}, \|X\|_{Y_2}\}, \qquad 
\|X\|_{K^*} = \inf \{\|X_1\|_{Y_1} + \|X_2\|_{Y_2}, X_1 + X_2 = X\} \,.
\end{equation} 
Note that 
\begin{equation}
Y_1^* :=  L^2([0, T] \times \Omega, H^{-2}) \qquad 
Y_2^* := L^\frac{4}{3}([0, T] \times \Omega, W^{-1, \frac{4}{3}})
\end{equation}
Since 
\begin{equation}
\|A X^{(n)}\|_{K^*} \leq \alpha \|\Delta^2 X^{(n)}\|_{Y_1^*} +
 \alpha \|\nabla (|\nabla X^{(n)}|\nabla X^{(n)})\|_{Y^*_2} + \|Y^{(n)} \nabla X^{(n)}\|_{Y_1^*}
\end{equation}
and 
\begin{align*}
 \|\Delta^2 X^{(n)}\|_{Y_1^*} &= \E \|X^{(n)}\|_{L^2H^2}^2 \\
 \|\nabla (|\nabla X^{(n)}|^2\nabla X^{(n)})\|_{Y^*_2} &= \E \||\nabla X^{(n)}|^3\|_{L^{\frac{4}{3}}L^{\frac{4}{3}}} =  
 \E \|\nabla X^{(n)}\|_{L^{4}L^{4}}^3 \leq C \left(\E \|X^{(n)}\|_{L^{4}W^{1,4}}^4\right)^{\frac{3}{4}}\,, \\
 \|Y^{(n)} \nabla X^{(n)}\|_{Y_1^*}^2 &\leq C \E \||Y^{(n)}|X^{(n)}\|_{L^2L^2}^2  \leq 
C \E \|Y^{(n)}\|_{L^4L^4}^2 \|X^{(n)}\|_{L^4L^4}^2 \leq
   C \E \|X^{(n)}\|_{L^4W^{1,4}}^4\,.
\end{align*}
Thus, from Lemma \ref{l:coe} follows that 
\begin{equation}\label{kse}
\|A X^{(n)}\|_{K^*} \leq C
\end{equation}
with $C$ independent of $n$, because $\E M(X_0) \leq \E \|\Trs_0\|^2 < \infty$.

The continuity of the map (assumption (H1) in \cite{Liu-Rockner}) 
$s \mapsto \langle A(X_1 + s X_2), X\rangle$, $i = 1, 2$ is easy to verify for any $X_1, X_2, X \in H^2$. 
Also,  the local monotonicity assumption ((H2) in \cite{Liu-Rockner})
\begin{equation}\label{moo}
\langle A(X_1) - A(X_2), X_1 - X_2 \rangle \leq (K + \kappa(X_2)) \|X_1 - X_2\|^2_{L^2}
\end{equation}
is valid for our operator $A$. Indeed, note that $p$-Laplacian is monotone operator (see e.g.
\cite[Proposition 30.10]{Zeidler}), thus \eqref{moo} holds true for $AX  =  \nabla(|\nabla X|^2 \nabla X)$ with $f = \rho = \kappa \equiv 0$. Since $Y_1$ is divergence free,  \eqref{rtd}, $L^\infty \hookrightarrow H^2$, and H\" older and Young inequalities yield
\begin{align}
\langle A(X_1) - A(X_2), X_1 - X_2\rangle &\leq - \alpha\|X_1 - X_2\|^2_{H^2} +  
\langle Y_2\nabla X_2 -Y_1\nabla X_1, X_1 - X_2 \rangle \\
&= 
-\alpha\|X_1 - X_2\|^2_{H^2} +
\langle Y_2\nabla X_2 -Y_1\nabla X_2, X_1 - X_2 \rangle \\
&=  
-\alpha\|X_1 - X_2\|^2_{H^2} +
\langle (Y_2 - Y_1)\nabla X_2, X_1 - X_2 \rangle \\
&\leq -\alpha\|X_1 - X_2\|^2_{H^2} + 
\|Y_2 - Y_1\|_{L^2} \|\nabla X_2\|_{L^2} \|X_1 - X_2\|_{L^\infty}
\\
&\leq 
C_\alpha \|X_2 - X_1\|_{L^2}^2 \|\nabla X_2\|_{L^2}^2 \,,
\end{align}
where we used that
\begin{align}
Y_1\nabla X_1=Y_1\nabla (X_1-X_2)+Y_1\nabla X_2
\end{align}
and that, by an integration by part and the property $\nabla\cdot Y_2=0$,
\begin{align}
-\langle Y_1\nabla (X_1-X_2),X_1-X_2\rangle=\langle\nabla\cdot Y_1,\frac{1}{2}(X_1-X_2)^2\rangle =0.
\end{align}
Thus, assumption (H2) in \cite{Liu-Rockner} holds with $K = 0$
and $\rho(X) = \|X\|_{H^1}^2$. Next, the assumption (H3) in \cite{Liu-Rockner}:
\begin{equation}
2 \langle A(X), X \rangle  + \delta \|X\|_{H^2}^\gamma \leq  K\|X\|^2_{L^2} + f(t)
\end{equation}
holds in our case, due to cancellation in the non-linear term, 
for $\delta < 2\alpha$,  $f \equiv 0$, and $\gamma = 2$. 

Finally, as stated in \cite[Remark 3.2, (4)]{Liu-Rockner} the growth assumption in \cite[(H4)]{Liu-Rockner} is only needed to prove that 
$\E \|A(X^{(n)})\|_{K^*}^2$ is uniformly bounded in $n$, which was 
already established in \eqref{kse}.

The rest of the proof follows line by line the same as in \cite[proof of Theorem 1.1]{Liu-Rockner}. 
\end{proof}

\section{Probabilistic estimates for the stochastic flow}\label{section_ProbaEstimates}

In this section, we derive moment bounds 
on solutions of \eqref{sqgs}, that were constructed in Section \ref{section_GWP}. Our choice of norms is dictated by the conserved quantities of \eqref{sqg}, 
and it is essential to keep track of dependencies of constants on $\alpha$. The proofs are based on energy estimates and It\^ o lemma recalled in Appendix 
\ref{sec:Ito}. 
  
For $A_0$ defined in \eqref{asn}, observe that for any $p>0$ 
\begin{equation}
\int_{\TT} \left( \sum_{j = 1}^\infty  a_j^2 (e_j(x))^2  \right)^{\frac{p}{2}} \, dx \leq \frac{1}{(2\pi)^p}\int_{\TT} \left( \sum_{j = 1}^\infty  a_j^2   \right)^{\frac{p}{2}} \, dx = (2\pi)^{2-p} A_0^{\frac{p}{2}} < \infty 
\end{equation}
and recall the notation
\begin{align}
M(\Trs)=\frac{1}{2}\int_{\TT} \Trs^2dx \,.
\end{align}

\begin{Thm}
Assume $A_0 < \infty$. Then, the solution $\Trs$ constructed in Theorem $\ref{TheoremGWP}$ satisfies the following properties:

\begin{enumerate}
\item If $\E \|\Trs(0)\|_{H^{-1/2}}^2<\infty,$ then for any $t \geq 0$
\begin{multline}
\E \|\Trs(t)\|_{H^{-\frac{1}{2}}}^2 + 2\alpha \E \int_0^t  \| \Trs(s) \|_{H^{\frac{3}{2}}}^2  ds
+ 2 \alpha \E \int_0^t \int_{\TT} | \nabla \Trs(s) |^2 \nabla \Trs \cdot \nabla (-\Delta^{-\frac{1}{2}}) \Trs  dx ds
\\
= \E \|\Trs_0\|_{H^{-\frac{1}{2}}}^2  +\alpha A_{-1}t. \label{hmj}
\end{multline}
\item If $\E M^q (\Trs(0))<\infty$ for some $q\geq 1$,  then for any $t \geq 0$
\begin{multline}\label{cet}
\E M^q (\Trs(t)) + 2\alpha q \E \int_0^t  M^{q - 1} (\Trs)  ( \|\Trs\|_{H^2}^2 + \|\Trs\|_{W^{1,4}}^4) ds \\
=  \E M^q (\Trs(0)) + \alpha q \E \int_0^t A_0 M^{q - 1} (\Trs) ds + 
2(q - 1)  M^{q - 2}  (\Trs) \sum_{j = 1}^\infty  a_j^2 \left( \int_{\TT}   e_j \Trs dx \right)^2  ds \,.
\end{multline}
In particular, when $q=1$, we have for any $t \geq 0$ 
\begin{align}
\E\|\Trs(t)\|^2 + 2\al\int_0^t\E(\|\Trs(s)\|_{H^2}^2 + \|\Trs(s)\|_{W^{1,4}}^4)ds =\E\|\Trs_0\|^2 + \al A_0t.\label{alp}
\end{align}

\end{enumerate}
\end{Thm}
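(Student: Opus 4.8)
The plan is to derive both identities by applying the infinite-dimensional It\^o formula of Appendix~\ref{sec:Ito} to well-chosen functionals of $\Trs$ and then taking expectations, so that the stochastic integral drops out as a martingale. Since the solution $\Trs$ is obtained as a limit of the Galerkin approximations $X^{(n)}$ of \eqref{fds}, I would first establish each identity at the level of $X^{(n)}$, where \eqref{fds} is a finite-dimensional SDE and the classical It\^o formula applies verbatim, and then pass to the limit $n\to\infty$ using the uniform-in-$n$ bounds of Lemma~\ref{l:coe} and the convergence $X^{(n)}\to\Trs$ from Theorem~\ref{TheoremGWP}.

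For part (1), apply It\^o to $\Phi(\Trs)=\|\Trs\|_{H^{-1/2}}^2=\langle\Trs,(-\Delta)^{-1/2}\Trs\rangle$. The decisive point is that the transport term does not contribute: writing $\bfU=\nabla^\perp\psi$ with $\psi=(-\Delta)^{-1/2}\Trs$ and integrating by parts using $\nabla\cdot\bfU=0$ gives $\langle\psi,\bfU\cdot\nabla\Trs\rangle=-\int_{\TT}\Trs\,\nabla\psi\cdot\nabla^\perp\psi\,dx=0$, because $\nabla\psi\cdot\nabla^\perp\psi\equiv 0$ pointwise; this is exactly the conservation of $E_{-1/2}$. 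The bi-Laplacian term yields $-2\alpha\langle\Trs,(-\Delta)^{3/2}\Trs\rangle=-2\alpha\|\Trs\|_{H^{3/2}}^2$ and the $p$-Laplacian term, after one integration by parts, yields $-2\alpha\int_{\TT}|\nabla\Trs|^2\nabla\Trs\cdot\nabla(-\Delta)^{-1/2}\Trs\,dx$; moving both to the left-hand side produces the two dissipation terms in \eqref{hmj}. Finally, the It\^o trace term contributes the deterministic drift $\alpha\sum_j a_j^2\|e_j\|_{H^{-1/2}}^2\,t=\alpha\big(\sum_j a_j^2\lambda_j^{-1/2}\big)\,t$, which is the constant on the right of \eqref{hmj} and is finite since $\sum_j a_j^2\lambda_j^{-1/2}\le\lambda_1^{-1/2}A_0<\infty$.

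For part (2), I would apply It\^o in two stages. First, for $M(\Trs)=\tfrac12\|\Trs\|^2$ the transport term again cancels directly, $\langle\Trs,\bfU\cdot\nabla\Trs\rangle=\tfrac12\int_{\TT}\bfU\cdot\nabla(\Trs^2)\,dx=0$, the two smoothing terms give $-\alpha\|\Trs\|_{H^2}^2$ and $-\alpha\|\Trs\|_{W^{1,4}}^4$, and the trace term gives $\tfrac{\alpha A_0}{2}$ (here $\|e_j\|=1$). Then, applying the scalar It\^o formula to $x\mapsto x^q$ along the semimartingale $M(\Trs(t))$, the first-order term reproduces $qM^{q-1}\,dM$, while the second-order term $\tfrac12 q(q-1)M^{q-2}\,d[M]$ generates the quadratic-variation contribution proportional to $M^{q-2}\sum_j a_j^2\langle e_j,\Trs\rangle^2$, which is the last term in \eqref{cet}. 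Taking expectations kills the stochastic integral and gives \eqref{cet}; setting $q=1$ makes the second-order term vanish and recovers \eqref{alp}.

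The main obstacle is the rigorous passage to the limit turning the formal computations into the stated equalities. The transport cancellation survives the Galerkin truncation exactly, because $(-\Delta)^{-1/2}$ and the derivative of $M$ map $H_n$ into itself and so the projection $P_n$ is invisible when tested against $(-\Delta)^{-1/2}X^{(n)}$ or $X^{(n)}$; thus the delicate term causes no trouble at finite $n$. By contrast the dissipative and nonlinear integrals are only weakly convergent a priori, which would give inequalities rather than equalities. To recover equality I would use that all the remaining terms (initial data and trace/quadratic-variation constants) converge, together with the bounds \eqref{ltn} and lower semicontinuity of the norms, to force strong convergence of the dissipation integrals; alternatively one applies the It\^o formula of Appendix~\ref{sec:Ito} directly to $\Trs$, whose regularity $C_tL^2\cap L^2_tH^2\cap L^4_tW^{1,4}$ is precisely what renders the pairings $\langle\Delta^2\Trs,(-\Delta)^{-1/2}\Trs\rangle$ and $\langle\nabla(|\nabla\Trs|^2\nabla\Trs),\Trs\rangle$ well defined and the trace series summable under $A_0<\infty$.
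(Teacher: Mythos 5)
Your computations are the right ones and coincide with what the paper does: the transport term is annihilated exactly as in \eqref{hoi} (via $\nabla\cdot\bfU=0$ and $\nabla\psi\cdot\nabla^{\perp}\psi\equiv 0$), the bi-Laplacian and $p$-Laplacian produce the two dissipation integrals after integration by parts, and for $M^q$ the second-order It\^o term produces the $M^{q-2}\sum_j a_j^2\left(\int_{\TT} e_j\Trs\,dx\right)^2$ contribution. The difference is in how the formal computation is justified. The paper does \emph{not} go through the Galerkin system here: it applies the abstract It\^o formula of Theorem \ref{intro_KS_Ito_theorem} directly to the solution $\Trs$ of Theorem \ref{TheoremGWP}, checking \eqref{intro_KS_Ito_F1} and \eqref{intro_KS_Ito_F2} for $F=\|\cdot\|_{H^{-1/2}}^2$ and $F=M^q$, and verifying the martingale condition \eqref{intro_KS_Ito_F3} from the moment bound \eqref{ltn}. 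That is exactly the ``alternative'' you mention in your last sentence, and it is the route you should commit to.

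Your primary route (prove the identities for $X^{(n)}$ and pass $n\to\infty$) has a genuine soft spot at the limit passage, and your proposed fix does not cover it. Lower semicontinuity plus convergence of the remaining terms can upgrade an inequality to an equality only when the quantities that converge merely weakly are sign-definite. In \eqref{cet} the dissipation $\|\Trs\|_{H^2}^2+\|\Trs\|_{W^{1,4}}^4$ is nonnegative, so the trick is at least available, though even there you need $\E\, M^q(X^{(n)}(t))\to\E\, M^q(\Trs(t))$ at the fixed time $t$, which the variational (Liu--R\"ockner) convergence does not hand you for free. In \eqref{hmj}, however, the term $\int_{\TT}|\nabla\Trs|^2\nabla\Trs\cdot\nabla(-\Delta)^{-\frac{1}{2}}\Trs\,dx$ has no sign, so the semicontinuity argument cannot identify its limit at all; you would need strong convergence of $\nabla X^{(n)}$ in $L^4_{t,x}$, which is not among the stated conclusions of Theorem \ref{TheoremGWP}. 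Applying Theorem \ref{intro_KS_Ito_theorem} directly avoids all of this. One last small point: the It\^o trace term you compute in part (1), $\alpha\sum_j a_j^2\lambda_j^{-1/2}=\alpha A_{-1/2}$, is what the calculation actually gives (and what \eqref{est_L2_mualpha} and \eqref{est_L2_lim_meas} later rely on), whereas \eqref{hmj} as printed carries $\alpha A_{-1}t$; that appears to be a typo in the paper rather than an error on your side.
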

\begin{proof}
We proceed in the following steps.
\paragraph{\bfseries Proof of \eqref{hmj}.}

We use that $H^{-\frac{1}{2}}$ is conserved  for the SQG equation \eqref{sqg} 
(see \eqref{hoi} below). Clearly, the function $\Trs \mapsto \|\Trs\|_{H^{-1/2}}^2 = \|\Delta^{-1/4} \Trs \|^2$ satisfies assumptions \eqref{intro_KS_Ito_F1} and \eqref{intro_KS_Ito_F2} of Theorem \ref{intro_KS_Ito_theorem} with $s=-\frac{1}{2}$. To satisfy \eqref{intro_KS_Ito_F3}, notice that
\begin{align}
\sum_{j = 1}^\infty  a_j^2  \int_0^t \E \left( \int_{\TT} (-\Delta)^{-\frac{1}{4}} \Trs  (-\Delta)^{-\frac{1}{4}} e_j dx \right)^2 ds &= 
\sum_{j = 1}^\infty  a_j^2  \int_0^t \E \left( \int_{\TT}  \Trs  
(-\Delta)^{- \frac{1}{2}} e_j dx \right)^2 ds  \\
&= \sum_{j = 1}^\infty \frac{ a_j^2}{\lambda_j}  \int_0^t \E \left( \int_{\TT}  \Trs  e_j dx \right)^2 ds  \\
&\leq C  \sum_{j = 1}^\infty \frac{ a_j^2}{\lambda_j}  \int_0^t \E \int_{\TT}  \Trs^2   dx  ds \\
&= C A_{-1} \E  \int_0^t   \|\Trs\|^2   ds < \infty \,,
\end{align}
where the last inequality follows from \eqref{ltn}. Thus, Theorem \ref{intro_KS_Ito_theorem} yields
\begin{multline}
\E \|\Trs(t)\|_{H^{-1/2}}^2 = \E \|\Trs(0)\|_{H^{-1/2}}^2 + 
2\E \int_0^t \int_{\TT} (-\Delta)^{-\frac{1}{4}} \Trs (-\Delta)^{-\frac{1}{4}}(-\alpha \Delta^2 \Trs 
+ \al \nabla( |\nabla \Trs|^2 \nabla \Trs)
 - \bfU \nabla \Trs ) 
 \\
 + \alpha \sum_{j = 1}^\infty  a_j^2 ((-\Delta)^{-\frac{1}{4}} e_j)^2  \, dx  ds \,.
\end{multline}
Using that $\bfU = \nabla^{\perp}(-\Delta)^{\frac{1}{2}} \Trs$ is divergence free, we obtain 
\begin{equation}\label{hoi}
\begin{aligned}
 \int_{\TT} (-\Delta)^{-\frac{1}{4}} \Trs  (-\Delta)^{-\frac{1}{4}} (\bfU \nabla \Trs) \, dx &=
\int_{\TT} (-\Delta)^{-\frac{1}{2}} \Trs   \bfU \nabla \Trs \, dx \\
&=
 - \int_{\TT} ( \nabla (-\Delta)^{-\frac{1}{2}} \Trs \cdot \nabla^{\perp} (-\Delta)^{-\frac{1}{2}} \Trs) \Trs \, dx = 0 \,.
\end{aligned}
\end{equation}
Hence, using integration by parts and Fourier representation of fractional Laplacian we obtain
\begin{align}
\E \|\Trs(t)\|_{H^{-1/2}}^2 &= \E \|\Trs(0)\|_{H^{-1/2}}^2 +
2\alpha \E \int_0^t \int_{\TT} (-\Delta)^{-\frac{1}{4}} \Trs (-\Delta)^{-\frac{1}{4}}(- \Delta^2 \Trs + \nabla (|\nabla \Trs|^2\nabla \Trs)) \, dx ds  + 
\alpha \sum_{j = 1}^\infty \frac{ a_j^2}{\lambda_j}  t\\
&= \E \|\Trs(0)\|_{H^{-1/2}}^2 
- 2\alpha \E \int_0^t \|\Trs\|_{\frac{3}{2}}^2 ds -  
2\alpha \E \int_0^t \int_{\TT} (-\Delta)^{-\frac{1}{2}} \nabla \Trs  |\nabla \Trs|^2\nabla \Trs \, dx ds+ \alpha A_{-1}t 
\end{align}
and \eqref{hmj} follows.

\paragraph{Estimate for $M^q(\Trs).$}
Next, we turn our attention to the moment bounds for $M^q$ with $q \geq 1$. Clearly, the function $\theta \mapsto M^q (\Trs) = \frac{1}{2}\|\Trs\|^{2q}$ satisfies assumptions \eqref{intro_KS_Ito_F1} and \eqref{intro_KS_Ito_F2} of Theorem \eqref{intro_KS_Ito_theorem} with $s = 0$. In order to obtain 
 \eqref{intro_KS_Ito_F3}, we need to estimate  the quadratic variation of the martingale term. Since $(e_j)$ are bounded,  
\begin{align}
\sum_{j = 1}^\infty  a_j^2 \E \int_0^t\left( \|\Trs\|^{q - 1} \int_{\TT} \Trs e_j(x)  dx \right)^2 ds \leq C A_0 \E \int_0^t \|\Trs\|^{2q - 2} \int_{\TT} |\Trs|^2  dx ds =  
C A_0 \E \int_0^t  \|\Trs\|^{2q }   ds < \infty \,,
\end{align}
where the last inequality follows from \eqref{ltn}.
Hence, by Theorem \ref{intro_KS_Ito_theorem} we obtain for any $q \geq 1$
\begin{align}
\E M^q (\Trs(t)) &= \E M^q (\Trs(0)) + q \E \int_0^t M^{q - 1} (\Trs) \int_{\TT}  2\Trs   (-\alpha \Delta^2 \Trs  + \alpha \nabla (|\nabla \Trs|^2 \nabla \Trs) - \bfU \nabla \Trs
) +  \alpha \sum_{j = 1}^\infty  a_j^2 e_j^2  \, dx ds 
\\
&\qquad +  2\alpha(q - 1) M^{q - 2}  (\Trs) \sum_{j = 1}^\infty  a_j^2 \left( \int_{\TT}   e_j \Trs dx \right)^2  ds  \,.
\end{align}
An integration by parts and the property $\nabla \cdot \bfU = 0$ imply that $\int_{\TT} \Trs \bfU \nabla \Trs dx = 0$,  and  \eqref{cet} follows after integration by parts.
\end{proof}

\section{Stationary measures for the stochastic SQG}\label{section_StationMeasures}

In this section we construct invariant measures for the stochastic SQG equation \eqref{sqgs} and establish its moment bounds, which finishes the second part of 
Theorem \ref{thm-met}. As above, it is necessary 
to keep track of the parameter $\alpha$, since below we pass $\alpha$ to zero. Also note that the moment estimates are equalities, which will be important 
in the proof of non-degeneracy of the limiting measure. The proof of existence of invariant measures is based on the Kryloff-Bogoliouboff theorem, and the moment bounds
follow from bounds on solutions established in Section \ref{section_ProbaEstimates}.

\subsection{Construction and basic estimates}

\begin{Thm}\label{TheoremStationaryMeasures}
Assume $A_0 < \infty$. 
For any $\alpha \in (0,1),$ the equation $\eqref{sqgs}$ admits at least one stationary measure $\mu_\alpha$ supported on $L^2$ and satisfying the following properties:
\begin{align}
\int_{L^2}\left(\|\Trs\|_{H^\frac{3}{2}}^2
-  \int_{\TT} | \nabla \Trs |^2 \nabla \Trs \cdot \nabla (-\Delta)^{-\frac{1}{2}} \Trs  dx 
\right) \mu_\alpha(d\Trs) &=\frac{A_{\frac{-1}{2}}}{2}, \label{est_L2_mualpha}\\
\int_{L^2}(\|\Trs\|_{H^2}^2 + \|\Trs\|_{W^{1,4}}^4)\mu_\alpha(d\Trs) &=\frac{A_0}{2} \label{est_H1_mualpha}.
\end{align}
\item More generally, for any $q\geq 1,$
\begin{multline}
\int_{L^2}M^{q - 1} (\Trs)   (\|\Trs\|_{H^2}^2 + \|\Trs\|_{W^{1,4}}^4)\mu_\al(d\Trs) 
\\
=  \int_{L^2} \frac{A_0}{2} M^{q - 1} (\Trs) + (q - 1)  M^{q - 2}  (\Trs) \sum_{j = 1}^\infty  a_j^2 \left( \int_{\TT}   e_j \Trs dx \right)^2\mu_\alpha(d\Trs) \,.\label{est_powerq_mu_alpha}
\end{multline}
\item In particular, $\mu_\alpha(H^2)=1$,  any $\alpha > 0$, and any 
$q \geq 1$ there is $C$ independent of $\alpha$ such that 
\begin{equation}\label{igi}
\int_{L^2}M^{q - 1} (\Trs)  ( \|\Trs\|_{H^2}^2 + \|\Trs(s)\|_{W^{1,4}}^4) \mu_\al(d\Trs) \leq C \,.
\end{equation}
\end{Thm}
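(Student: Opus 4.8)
The plan is to produce $\mu_\alpha$ by the Kryloff--Bogoliouboff method and then to convert the pathwise energy balances \eqref{hmj}, \eqref{alp}, and \eqref{cet} into statements about the stationary state. First I would fix the deterministic datum $\Trs_0=0$, let $\Trs(\cdot\,;0)$ be the global solution furnished by Theorem \ref{TheoremGWP}, and form the time averages
\begin{equation}
\nu_T=\frac1T\int_0^T \mathrm{Law}\big(\Trs(t;0)\big)\,dt\in\mathfrak{p}(L^2(\TT))\,.
\end{equation}
Identity \eqref{alp} with $\Trs_0=0$, after discarding the nonnegative term $\E\|\Trs(t)\|^2$, gives $\int_{L^2}(\|\Trs\|_{H^2}^2+\|\Trs\|_{W^{1,4}}^4)\,\nu_T(d\Trs)\leq A_0/2$ uniformly in $T$. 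Since $H^2(\TT)\hookrightarrow L^2(\TT)$ is compact, Markov's inequality makes the family $(\nu_T)$ tight in $\mathfrak{p}(L^2)$, so along a subsequence $\nu_{T_k}\rightharpoonup\mu_\alpha$; the well-posedness in Theorem \ref{TheoremGWP} renders the Markov semigroup Feller on $L^2$, whence Kryloff--Bogoliouboff yields the stationarity of $\mu_\alpha$ for \eqref{sqgs}.

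Next I would extract moment bounds that are uniform in $\alpha$ by running the same lower-semicontinuity argument on the time averages of \eqref{cet}. Dropping the nonnegative boundary term and estimating the noise contribution by Bessel's inequality $\sum_j a_j^2(\int_\TT e_j\Trs\,dx)^2\leq A_0\|\Trs\|^2=2A_0 M(\Trs)$, one bounds $\int_{L^2} M^{q-1}(\Trs)(\|\Trs\|_{H^2}^2+\|\Trs\|_{W^{1,4}}^4)\,\nu_T(d\Trs)$ by a multiple of $\int_{L^2}M^{q-1}(\Trs)\,\nu_T(d\Trs)$, with constants free of $T$ and, crucially, of $\alpha$, since $\alpha$ cancels. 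Because $M^q(\Trs)\leq C\,M^{q-1}(\Trs)\|\Trs\|_{H^2}^2$, an induction on $q$ starting from $\nu_T(L^2)=1$ closes uniform-in-$(T,\alpha)$ bounds on all these averages. Lower semicontinuity of the functionals under $\nu_{T_k}\rightharpoonup\mu_\alpha$ then gives $\mu_\alpha(H^2)=1$, finiteness of $\int_{L^2} M^q\,\mu_\alpha(d\Trs)$ for every $q$, and the estimate \eqref{igi} with $C$ independent of $\alpha$.

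With these moments secured, I would promote the inequalities to the claimed equalities using stationarity. Since $\int_{L^2}\|\Trs\|^2\,\mu_\alpha(d\Trs)<\infty$, I may start \eqref{sqgs} from $\Trs_0\sim\mu_\alpha$; invariance forces $\mathrm{Law}(\Trs(t))=\mu_\alpha$ for all $t$, so in \eqref{alp} the two boundary terms cancel and the integrand is constant in $s$, and dividing by $2\alpha t$ yields \eqref{est_H1_mualpha}. The identical manipulation applied to \eqref{cet} for general $q$ gives \eqref{est_powerq_mu_alpha}, and applied to the $H^{-1/2}$ balance \eqref{hmj} gives \eqref{est_L2_mualpha}; here the forcing constant is $A_{-1/2}=\sum_j\lambda_j^{-1/2}a_j^2$ (see \eqref{asn}), arising from $\int_\TT((-\Delta)^{-1/4}e_j)^2\,dx=\lambda_j^{-1/2}$.

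The main obstacle is the circularity between applicability and conclusion: the It\^o-based balances \eqref{alp}, \eqref{hmj}, \eqref{cet} can legitimately be invoked for a $\mu_\alpha$-distributed initial datum only once the corresponding moment of $\mu_\alpha$ is already known to be finite, yet those balances are precisely the tool one wants for bounding the moments. The two-step scheme above is what breaks the loop: one first extracts one-sided (inequality) moment bounds directly from the time averages $\nu_T$ by lower semicontinuity, and only after finiteness is in hand does one use stationarity to sharpen them into exact identities. A secondary technical point is ensuring that every constant produced by the induction remains genuinely independent of $\alpha$ (this is what later feeds the limit $\alpha\to0$), which rests on the cancellation of $\alpha$ already visible in \eqref{est_H1_mualpha} and \eqref{est_powerq_mu_alpha}.
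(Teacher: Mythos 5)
Your proposal is correct and follows essentially the same route as the paper: time averages of the trajectory started from $\Trs_0=0$, tightness from the uniform $H^2$ bound in \eqref{alp}, Kryloff--Bogoliouboff for stationarity, one-sided moment bounds on the time averages first (to break exactly the circularity you identify), and then the exact identities \eqref{est_L2_mualpha}--\eqref{est_powerq_mu_alpha} from cancellation of the boundary terms under invariance. The only cosmetic difference is that you close the uniform bound \eqref{igi} by induction on $q$ using $M^{q-1}\leq C M^{q-2}\|\Trs\|_{H^2}^2$, whereas the paper splits the integral over $\{\|\Trs\|_{H^2}^2\gtrless R\}$ and absorbs; both work, and your identification of the It\^o correction constant as $A_{-1/2}$ is the one consistent with \eqref{est_L2_mualpha}.
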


\begin{proof}
 \textbf{Tightness and existence of stationary measures.}
Let $\Trs_\alpha$ be the solution of \eqref{sqgs} with $\Trs_\alpha(0) = 0$ almost surely, that is $\Trs_\al(0)$ is distributed as  the Dirac measure concentrated at $0$.
Then, by \eqref{alp}, one has 
\begin{align}
 2\alpha \E \int_0^t \| \Trs \|_{H^2}^2 ds \leq  \alpha A_0 t\,,
\end{align}
and consequently
\begin{equation}\label{frs}
 \frac{1}{t} \E \int_0^t \| \Trs \|_{H^2}^2 ds \leq C  \,.
\end{equation}
 For each $t > 0$ define the Borel probability measure on $L^2(\TT)$ as
\begin{equation}
\mu_\al^t (A) = \frac{1}{t} \int_0^t \Prb(\Trs_\al(s) \in A) \, ds \,,
\end{equation}
where $ A$ is any Borel set in  $H^2(\TT) \,.$
Then, 
\begin{equation}\label{smt}
\int_{L^2(\TT)} \|\Trs\|^2_{H^2} \mu_\al^t(d\Trs) = \frac{1}{t} \E \int_0^t
 \| \Trs_\al(s) \|_{H^2}^2 ds \leq C. 
\end{equation}
In particular, if $B_R$ is a ball  in $H^2$ of radius $R$ centered at $0$ and $B^c_R = H^2 \setminus B_R$, then 
by Chebyshev inequality and \eqref{smt} one has
\begin{equation}
\mu_\al^t (B_R^c) =  \frac{1}{t} \int_0^t \Prb( \|\Trs_\al (s)\|_{H^2} \geq R) \, ds \leq  \frac{1}{t} \int_0^t \frac{ \E  \|\Trs(s)\|_{H^2}^2}{R^2} \, ds \leq \frac{C}{R^2} \,.
\end{equation}
Since $B_R$ is compact in $H^{2 - \delta}$, $\delta > 0$, then for each $\alpha > 0$,
the set of measures $(\mu_\al^t)_{t > 0}$ is tight, and therefore by Pokhorov theorem it is compact. 
For any sequence $(t_n)$ with $t_n \to \infty$, one has that 
$\mu_\al^{t_n}$ has a  weakly convergent subsequence converging to $\mu^*_\al$.  The Bogoliubov-Krylov argument (see e.g. \cite{da2014stochastic}) implies that 
$\mu^*_\al$ is stationary for \eqref{sqgs}.

Also, by using \eqref{cet} with $\Trs(0) = 0$, we obtain for any $q \geq 1$
\begin{align}
\int_{L^2}M^{q - 1} (\Trs) (  \|\Trs\|_{H^2}^2 &+ \|\Trs(s)\|_{W^{1,4}}^4)\mu_\al^t(d\Trs) 
\leq C \int_{L^2} M^{q - 1} (\Trs) \mu_\alpha^t(d\Trs)  \\
&=  C \int_{ \|\Trs\|_{H^2}^2 \geq R} M^{q - 1} (\Trs) \frac{ \|\Trs\|_{H^2}^2}{ \|\Trs\|_{H^2}^2} \mu_\alpha^t(d\Trs) + 
 C\int_{ \|\Trs\|_{H^2}^2 < R} M^{q - 1} (\Trs) \mu_\alpha^t(d\Trs)\\
 &\leq 
 \frac{C}{R} \int_{ \|\Trs\|_{H^2}^2 \geq R} M^{q - 1} (\Trs)  \|\Trs\|_{H^2}^2 \mu_\alpha^t(d\Trs) + 
 C\int_{ \|\Trs\|_{H^2}^2 < R} M^{q - 1} (\Trs) \mu_\alpha^t(d\Trs)
 \,.
\end{align}
Choosing $R = 2C$ and using $M(\Trs) \leq C  \|\Trs\|_{H^2}$ we have
\begin{equation}
\int_{L^2}M^{q - 1} (\Trs) (  \|\Trs\|_{H^2}^2 + \|\Trs(s)\|_{W^{1,4}}^4)
 \mu_\al^t(d\Trs) \leq C\int_{ \|\Trs\|_{H^2}^2 < 2C} M^{q - 1} (\Trs) \mu_\alpha^t(d\Trs) \leq C 
\end{equation}
were $C$ is independent of $t$. Then, by the Portmanteau theorem, the same 
inequality holds true with $\mu^t_\al$ replaced by $\mu_\al$ and \eqref{igi} follows.

\textbf{Estimates for the stationary measures.}
Denote by $\mu_\alpha$ any invariant measure constructed by the above procedure. 
Let $\Trs_0^\alpha$ be a random variable
with the law $\mu_\alpha$ and let $\Trs^\alpha$ be the solution of \eqref{sqgs} with initial condition $\Trs_0^\alpha$. Then, by \eqref{igi} one has 
$\E \|\Trs_0^\alpha\|^{q - 1} < \infty$ and $\E \|\Trs_0^\alpha\|_{H^{-\frac{1}{2}}}^2 < \infty$  for any $q \geq 1$. 

Also, the invariance of $\mu_\alpha$ implies 
$\E \|\Trs^\alpha(t)\|^2_X = \E \|\Trs^\alpha_0\|^2_X$ for $X$ being $L^2$, $W^{1, 4}$, or $H^2$, and consequently
by \eqref{alp}  
\begin{align}\label{ihj}
t \int_{L^2} \|\Trs\|_{H^2}^2 + \|\Trs\|_{W^{1,4}}^4\mu_\alpha(d\Trs) &= 
 \E \int_0^{t} \|\Trs^\alpha(s)\|^2_{H^2} + \|\Trs^\al(s)\|_{W^{1,4}}^4 ds 
  = t \frac{A_0}{2} 
\end{align}
and \eqref{est_H1_mualpha} follows. 

Similarly,  in \eqref{hmj} and \eqref{cet}, 
using $\E\|\Trs^\alpha(t)\|_{H^\frac{-1}{2}}^2=\E\|\Trs^\alpha_0\|_{H^\frac{-1}{2}}^2$ 
and  $\E M^q(\Trs(t)) = \E M^q(\Trs(0))$ (by the invariance of $\mu_\alpha$)
we obtain respectively \eqref{est_L2_mualpha} and  \eqref{est_powerq_mu_alpha}.
Recall that \eqref{igi} was already proved. 
\end{proof}

\section{Inviscid limit}\label{section_InviscidLimit}

This section contains proofs of essential assertions of the manuscript detailed in Theorem \ref{thm:main} parts 1--5. In particular, we prove prove convergence of 
measures $\mu_\alpha$ constructed in Theorem \ref{TheoremStationaryMeasures} to an invariant measure $\mu$ for the deterministic SQG equation. Furthermore we show that 
almost all points in the support of $\mu$ are initial conditions for regular global solutions.

\begin{Thm}\label{TheoremPassagetotheLimit}
If $A_0 < \infty$, 
there exists a measure $\mu$ supported on $H^2$ with the following properties: 
\begin{enumerate}
\item For almost every $\Trs_0 \in \textrm{supp}(\mu)$, there exists a unique, global (existing for all positive times) solution $\Trs$ of \eqref{sqg} with  $\Trs\in C_tH^1_x\cap L^2_tH^2$. Furthermore, for any $t \geq 0$, the mas $\Trs_0 \mapsto \Trs(t, \Trs_0)$ is a continuous on $H^1$. 
\item The measure $\mu$ is invariant for \eqref{sqg}, meaning that for every Borel set $A$ in $H^2$, one has 
$\mu\{\Trs_0 : \Trs(t, \Trs_0) \in A\} = \mu(A)$.
\item For any $q \geq 1$ we have the moment bounds
\begin{align}
\int_{L^2}\left(\|\Trs\|_{H^\frac{3}{2}}^2
-  \int_{\TT} | \nabla \Trs |^2 \nabla \Trs \cdot \nabla (-\Delta)^{-\frac{1}{2}} \Trs  dx 
\right) \mu(d\Trs) &=\frac{A_{\frac{-1}{2}}}{2}\,, \label{est_L2_lim_meas}\\
\int_{L^2} M^{q - 1}(\Trs)( \|\Trs\|_{H^2}^2 + \|\Trs\|_{W^{1,4}}^4)\mu(d\Trs)&\leq C. \label{est_H1_lim_meas}
\end{align} 
\end{enumerate}
\end{Thm}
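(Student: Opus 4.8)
The plan is to follow the inviscid-limit strategy developed for the Euler equation in \cite{kuk_eul_lim, GHSV, KS12}, but to replace the use of global well posedness of the deterministic equation (unavailable here) by the uniform-in-$\alpha$ moment bounds of Theorem \ref{TheoremStationaryMeasures}, which hold precisely because the fluctuation and dissipation are balanced. First I would extract the measure $\mu$ and establish part (3). The bound \eqref{est_H1_mualpha} is uniform in $\alpha$, so by Chebyshev the family $(\mu_\alpha)$ is tight on $H^{2-\delta}$ for any $\delta>0$ (balls of $H^2$ being compact there), and Prokhorov's theorem yields a subsequence $\mu_{\alpha_n}\rightharpoonup\mu$ on $H^{2-\delta}$ with $\alpha_n\to0$. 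Since $\Trs\mapsto\|\Trs\|_{H^2}^2$ is lower semicontinuous on $H^{2-\delta}$, the Portmanteau theorem gives $\int\|\Trs\|_{H^2}^2\,\mu(d\Trs)\le A_0/2<\infty$, hence $\mu(H^2)=1$, and the same semicontinuity argument proves the inequality \eqref{est_H1_lim_meas}. For the equality \eqref{est_L2_lim_meas} I would use that, for $\delta<\tfrac12$, one has the compact embedding $H^{2-\delta}\hookrightarrow W^{1,4}$, so that $\Trs\mapsto\|\Trs\|_{H^{\frac{3}{2}}}^2-\int_{\TT}|\nabla\Trs|^2\nabla\Trs\cdot\nabla(-\Delta)^{-\frac{1}{2}}\Trs\,dx$ is continuous on $H^{2-\delta}$; combining continuity with the uniform integrability supplied by the higher moment bound \eqref{est_powerq_mu_alpha} (taken with some $q>1$) lets the equality \eqref{est_L2_mualpha} pass to the limit.

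Next I would lift the convergence to the level of trajectories. Let $\Trs^\alpha$ be a stationary solution of \eqref{sqgs} with $\Trs^\alpha(t)\sim\mu_\alpha$ for all $t$. Stationarity and \eqref{est_H1_mualpha} give $\E\int_0^T\|\Trs^\alpha\|_{H^2}^2\,ds\le TA_0/2$ uniformly in $\alpha$, while the dissipative drift $-\alpha\Delta^2\Trs^\alpha+\alpha\Delta_4\Trs^\alpha$ and the noise $\sqrt\alpha\,d\eta$ vanish in expectation as $\alpha\to0$ in suitable negative-order spaces; only the transport term $R^\perp\Trs^\alpha\cdot\nabla\Trs^\alpha$, bounded in $L^1([0,T],L^2)$ by $\|\Trs^\alpha\|_{H^2}^2$, survives. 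These estimates yield uniform bounds on the time increments of $\Trs^\alpha$ in a negative Sobolev norm, and together with the $L^2_tH^2$ bound and the compact embedding they make the laws of $(\Trs^\alpha)$ tight in $L^2([0,T],H^{2-\delta})\cap C([0,T],H^{-\delta})$. By Prokhorov and the Skorokhod representation theorem I pass, along a subsequence, to an almost surely convergent realization with limit $\Trs$ whose time-marginals are exactly $\mu$.

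It remains to identify $\Trs$ as a solution and to transfer invariance. Passing to the limit in the nonlinearity is the first delicate point: strong $L^2([0,T],H^{2-\delta})$ convergence (from the Aubin--Lions compactness above) together with $H^{2-\delta}\hookrightarrow L^\infty$ lets one pass to the limit in $R^\perp\Trs^\alpha\cdot\nabla\Trs^\alpha$, so that $\Trs$ solves \eqref{sqg} in $C_tH^1\cap L^2_tH^2$. The second main obstacle is deterministic uniqueness in this borderline class: for two solutions with the same data I would estimate the difference $w$ in $H^{-\frac12}$, pairing its equation with $(-\Delta)^{-\frac12}w$ and using a commutator bound for $[R^\perp\Trs\cdot\nabla,(-\Delta)^{-\frac14}]$ controlled by $\|\Trs\|_{H^2}$, so that the Gr\"onwall factor $\exp\!\big(C\int_0^t\|\Trs\|_{H^2}^2\,ds\big)$ is finite precisely because of the $L^2_tH^2$ regularity. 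The same stability estimate, combined with interpolation against the uniform $H^2$ bound, gives continuity of the flow on $H^1$ and hence part (1).

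Finally, uniqueness forces the limiting trajectory to coincide with $\rho_t(\Trs(0))$, where $\rho_t$ is the deterministic flow just constructed; since $\Trs(t)\sim\mu$ for every $t$, this yields $(\rho_t)_\#\mu=\mu$, i.e. the invariance asserted in part (2), and the existence of the limiting trajectory for every $T$ gives globality for $\mu$-almost every initial datum. I expect the deterministic uniqueness estimate and the passage to the limit in the nonlinear term to be the principal difficulties, since both sit at the critical regularity level for SQG; it is exactly the uniform moment bounds of Theorem \ref{TheoremStationaryMeasures} that place the limit in $C_tH^1\cap L^2_tH^2$ and thereby make these two steps feasible without any global well posedness for general data.
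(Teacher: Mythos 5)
Your overall architecture coincides with the paper's: tightness of $(\mu_\alpha)$ on $H^{2-\delta}$ from the uniform bound \eqref{est_H1_mualpha}, passage to the limit of the moment identities by continuity plus uniform integrability, lifting to measures on trajectory space with uniform $L^2_tH^2$ and time-increment bounds, Skorokhod representation, identification of the limit as a solution of \eqref{sqg}, and then invariance deduced from the constancy of the time-marginals combined with deterministic uniqueness. All of that matches Proposition \ref{prop:com}, Lemmas \ref{l:sko}--\ref{l:pjc} and Proposition \ref{p:mbm}.

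The genuine gap is in your uniqueness step. You propose to estimate the difference $w$ of two solutions in $H^{-\frac12}$ via a commutator bound for $[R^\perp\Trs\cdot\nabla,(-\Delta)^{-\frac14}]$ ``controlled by $\|\Trs\|_{H^2}$'' and then close with a plain Gr\"onwall factor $\exp\bigl(C\int_0^t\|\Trs\|_{H^2}^2\,ds\bigr)$. At the regularity $L^2_tH^2$ this does not close: in two dimensions $H^2$ embeds into $W^{1,p}$ for every finite $p$ but \emph{not} into $W^{1,\infty}$, so neither $\nabla\Trs$ nor $\nabla R^\perp\Trs$ is bounded, and any commutator or product estimate that produces a bound linear in $\|w\|_{H^{-1/2}}^2$ (or $\|w\|^2$) with a time-integrable coefficient would require exactly that missing $L^\infty$ control of a gradient. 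This is the borderline the paper flags explicitly before Lemma \ref{luts}: a direct energy estimate fails by a logarithm, and one must instead run a Yudovich--Osgood argument. Concretely, the paper tests with $w$, interpolates $\|w\|_{L^{2p}}\le\|w\|^{2-p}\|w\|_{L^{2p/(p-1)}}^{p-1}$, tracks the precise growth $C\sqrt{2p/(p-1)}$ of the Sobolev constant $H^1\hookrightarrow L^{2p/(p-1)}$, obtains
\begin{equation}
\frac{d}{dt}\|w\|^2\leq \frac{C}{\sqrt{1-\delta}}\,\|w\|^{2\delta}\bigl(1+\|\Trs_1\|_{H^2}^2+\|\Trs_2\|_{H^2}^2\bigr),\qquad \delta=\tfrac{3-p}{2},
\end{equation}
integrates this sublinear inequality, and only then sends $p\to1$ ($\delta\to1$) to force $w\equiv0$. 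The same device, run at the $H^1$ level, is what gives the continuity of the flow in $H^1$ claimed in part (1); your proposal would need to be repaired in the same way, since the continuity statement inherits the identical borderline difficulty.
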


Before providing details of the proof, let us first sketch the general strategy. 

\begin{Rmk}
Compared to the known results we face different challenges since it is not known whether the equation \eqref{sqg} is globally well posed. This poses several challenges. 
After verifying the tightness of measures $(\mu_\alpha)$ and passing $\mu_\alpha \to \mu$ as $\alpha \to 0$, we obtain moment bounds for $\mu$, however we cannot 
immediately conclude that almost all functions in the support are initial conditions of global solutions. This problem is not solved even if we prove that $\mu$ is invariant. For example, there can be a set $M_t$ of measure zero that contain functions that cease to exist at time $t$. Since $(M_t)_{t > 0}$ form an uncountable family, 
we cannot conclude that the union $\cup M_t$  has zero measure.  
\newline
For that reason we use the ``lifted" measures 
$\nu_\alpha$ supported on solutions of \eqref{sqgs} rather than on initial conditions. 
To pass $\alpha \to 0$ and conclude that the limiting measure $\nu$ is supported
on solutions of \eqref{sqg}, we have to obtain compactness (tightness) of $(\nu_\alpha)$ in spaces of time dependent functions. 
This follows from improved temporal bounds for the solutions of \eqref{sqgs}. Also, these bounds imply that the restriction of the measure $\nu$ at the initial time is $\mu$. 
\newline
Using the Skorokhod theorem we find stationary random variables $\Trs_\alpha$
distributed as $\nu_\alpha$ that converge almost surely to $\Trs$, which solves 
\eqref{sqg}. In addition, $\Trs(0)$ is distributed as $\mu$.  
\newline
To prove the uniqueness of $t \mapsto \Trs(t)$ we crucially use that the operator in the fluctuation-dissipation method is bi-Laplacian instead of Laplacian, and therefore 
$\Trs$ is supported on $L^2H^2$, a regularity space sufficient to guarantee uniqueness and continuous dependence on initial conditions. 
\end{Rmk}

\begin{proof}{Proof of Theorem \ref{TheoremPassagetotheLimit}}
The proof is divided into several parts. Proof of 1. follows from Proposition \ref{p:fpt}, 
part 2 follows from Lemma \ref{l:pjc} and the proof of 3 follows from Proposition \ref{p:mbm}. 
\end{proof}

If $\Trs_\al$ is a solution of $\eqref{sqgs}$ with $\Trs_\al(0)$ distributed as 
$\mu_\al$ (see Theorem \ref{TheoremStationaryMeasures}), then due to the invariance, $\Trs_\al(t)$ is
distributed as of $\mu_\al$ for any $t \geq 0$. 
We can either view $t \mapsto \theta_\al (t)$  as a random process with range in a space of $x$ dependent functions or alternatively, we can view $\theta_\al$ as a random variable on a space of $(x, t)$ dependent functions in
 $L^2_{loc}(\RR_+,H^2)$ (see \eqref{smt}). 

Denote $\nu_\al$ the distribution of $\Trs_\al$ and by the invariance of $\mu_\al$
one has $\Prb(\Trs_\al(t_0) \in A) =  \mu_\al (A)$ for any $t_0 \geq 0$, and any 
Borel set $A$ in  $H^{-\delta}$,  $\delta \in [0, 1)$. Observe that $\mu_\alpha$
is supported on $H^2$ and we can trivially (by zero) extend it to the larger space $H^{-\delta}$. 
Hence, 
\begin{equation}
\int \chi_{A \times \{t_0\}} d \nu_\alpha = 
\nu_\al (A \times \{t_0\}) = \Prb(\Trs_\al(t_0) \in A) =  \mu_\al (A) 
= \int_A \chi_{A} d\mu_\al 
\,,
\end{equation}
where $\chi_Z$ denotes the characteristic function of a set $Z$.  The 
linearity of integrals and the dominated convergence theorem also implies 
\begin{equation}\label{bci}
\int_{C_tH^{-\delta}_x} g(\Trs(t_0)) d\nu_\al (\Trs) =  
\int_{H^{-\delta}_x} g(\Trs)  d \mu_\al (\Trs)
\end{equation}
for any bounded continuous function $g: H^{-\delta}_x \to \RR$.

 Fix $T > 0$ and  define $I = (0, T) \subset \RR$.

\begin{Rmk}
In this section we implicitly assume that all spaces are defined on the time interval 
$I$. For example, $L^2H^2 = L^2(I, H^2(\TT))$ or $H^1_tL^4_x = H^1(I, L^4(\TT))$. 
We use the notation, say $L^4$ (single space), to denote $L^4(\TT)$, that is, 
we do not specify regularity in time. 
\newline
Also, $\Trs_\alpha$ denotes the solution of \eqref{sqgs}, that is, a function depending on $x$ and $t$, whereas $\Trs$ denotes the integration variable, that is, a function
depending on $x$ only.  
\end{Rmk}

By \eqref{igi}, and the invariance of $\mu_\alpha$ for any $q \geq 0$
\begin{align}\label{hdet}
C &\geq \int_0^T\int_{L^2_x} \|\Trs\|^q \|\Trs\|_{H^2}^2 d\mu_\al(\Trs) ds = 
\int_0^T \E \|\Trs_\al(s)\|^q \|\Trs_\al(s)\|_{H^2}^2 ds = 
\E \|\Trs_\al \|^q \|\Trs_\al\|_{L^2H^2}^2 \\
&= 
\int_{L^2_{t,x}} \|\Trs\|^q \|\Trs\|_{L^2H^2}^2 d\nu_\al(\Trs) \,,
\end{align}
where here and below $C$ is allowed to implicitly depend on $T$. To gain the temporal compactness in time, we prove the regularity of $\Trs_\alpha$ in time. 

\begin{Lem}\label{raam}
Set $\mathcal{X} = H^1L^2+H^1H^{-2}+ W^{1, \frac{4}{3}}W^{-1, \frac{4}{3}}+W^{\kappa,4}L^2$ with $\kappa\in(1/4,1/2)$ equipped with the standard sum 
norm (cf. \eqref{sxn}). Then
\begin{equation}
C \geq \E \|\Trs_\al\|_{\mathcal{X}}^\frac{4}{3} = \int_{L^2_{t,x}} \|\Trs\|_{\mathcal{X}}^\frac{4}{3} d\nu_\al(\Trs) \,,
\end{equation}
\end{Lem}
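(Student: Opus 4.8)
\emph{Plan of proof.} The displayed identity is nothing but the definition of $\nu_\al$ as the law of $\Trs_\al$ (exactly as in the computation preceding \eqref{hdet}), so the whole content is the uniform bound $\E\|\Trs_\al\|_{\mathcal{X}}^{4/3}\le C$ with $C$ independent of $\al$. The idea is to split $\Trs_\al$ according to the four terms on the right-hand side of the integral form of \eqref{sqgs} and to place each piece in exactly one summand of $\mathcal{X}$. Writing
\begin{equation}
\Trs_\al(t)=\Trs_\al^{\mathrm{I}}(t)+\Trs_\al^{\mathrm{II}}(t)+\Trs_\al^{\mathrm{III}}(t)+\Trs_\al^{\mathrm{IV}}(t),
\end{equation}
where
\begin{align}
\Trs_\al^{\mathrm{I}}(t)&=\Trs_\al(0)-\int_0^t R^\perp\Trs_\al\cdot\nabla\Trs_\al\,ds, &
\Trs_\al^{\mathrm{II}}(t)&=-\al\int_0^t\Delta^2\Trs_\al\,ds,\\
\Trs_\al^{\mathrm{III}}(t)&=\al\int_0^t\nabla(|\nabla\Trs_\al|^2\nabla\Trs_\al)\,ds, &
\Trs_\al^{\mathrm{IV}}(t)&=\sqrt{\al}\,\eta(t),
\end{align}
and using that the sum norm obeys $\|\Trs_\al\|_{\mathcal{X}}\le\|\Trs_\al^{\mathrm{I}}\|_{H^1L^2}+\|\Trs_\al^{\mathrm{II}}\|_{H^1H^{-2}}+\|\Trs_\al^{\mathrm{III}}\|_{W^{1,4/3}W^{-1,4/3}}+\|\Trs_\al^{\mathrm{IV}}\|_{W^{\kappa,4}L^2}$, it suffices to control the $\tfrac43$-moment of each of the four terms separately and uniformly in $\al$.

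For the three deterministic pieces I would differentiate in time and read off the target space from the structure of each term. For the transport part, $\partial_t\Trs_\al^{\mathrm{I}}=-R^\perp\Trs_\al\cdot\nabla\Trs_\al$, and by the Riesz bound \eqref{rtd} with $p=4$ and H\"older's inequality $\|R^\perp\Trs_\al\cdot\nabla\Trs_\al\|_{L^2}\le\|R^\perp\Trs_\al\|_{L^4}\|\nabla\Trs_\al\|_{L^4}\le C\|\Trs_\al\|_{W^{1,4}}^2$, whence $\partial_t\Trs_\al^{\mathrm{I}}\in L^2L^2$ and $\Trs_\al^{\mathrm{I}}\in H^1L^2$ with $\|\Trs_\al^{\mathrm{I}}\|_{H^1L^2}\le C(\|\Trs_\al(0)\|_{L^2}+\|\Trs_\al\|_{L^4W^{1,4}}^2)$. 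Similarly $\|\Delta^2\Trs_\al\|_{H^{-2}}\le C\|\Trs_\al\|_{H^2}$ gives $\|\Trs_\al^{\mathrm{II}}\|_{H^1H^{-2}}\le C\al\|\Trs_\al\|_{L^2H^2}$, while $\|\nabla(|\nabla\Trs_\al|^2\nabla\Trs_\al)\|_{W^{-1,4/3}}\le C\|\nabla\Trs_\al\|_{L^4}^3\le C\|\Trs_\al\|_{W^{1,4}}^3$ gives $\|\Trs_\al^{\mathrm{III}}\|_{W^{1,4/3}W^{-1,4/3}}\le C\al\|\Trs_\al\|_{L^4W^{1,4}}^3$; it is this last, lowest-integrability, term that forces the exponent $\tfrac43$ in the statement. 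Taking $\tfrac43$-moments and invoking the stationary bounds (\eqref{hdet} with $q=0$ gives $\E\|\Trs_\al\|_{L^2H^2}^2\le C$, and \eqref{est_H1_mualpha} together with the invariance of $\mu_\al$ gives $\E\|\Trs_\al\|_{L^4W^{1,4}}^4\le C$ and $\E\|\Trs_\al(0)\|_{L^2}^2\le C$, all uniform in $\al$), a single application of Jensen's inequality controls the three contributions; the prefactors $\al^{4/3}$ in fact make $\Trs_\al^{\mathrm{II}}$ and $\Trs_\al^{\mathrm{III}}$ vanish as $\al\to0$.

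The remaining and most delicate term is the stochastic one, where I would bound $\E\|\eta\|_{W^{\kappa,4}L^2}^4$ directly through the Gagliardo seminorm. Using \eqref{dfns} and the independence and Gaussianity of the $W_j$, one has $\E|W_j(t)-W_j(s)|^2=|t-s|$ and $\E\|\eta(t)-\eta(s)\|_{L^2}^4\le CA_0^2|t-s|^2$, so the seminorm integral $\iint|t-s|^{2-(1+4\kappa)}\,ds\,dt$ converges precisely when $\kappa<\tfrac12$, yielding $\E\|\eta\|_{W^{\kappa,4}L^2}^4\le C$; hence $\E\|\Trs_\al^{\mathrm{IV}}\|_{W^{\kappa,4}L^2}^{4/3}=\al^{2/3}\E\|\eta\|_{W^{\kappa,4}L^2}^{4/3}\le C\al^{2/3}(\E\|\eta\|_{W^{\kappa,4}L^2}^4)^{1/3}\le C$. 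Note that only $\kappa<\tfrac12$ is needed here, the lower bound $\kappa>\tfrac14$ being reserved for the compactness (Aubin--Lions) step that follows. Summing the four uniform estimates yields $\E\|\Trs_\al\|_{\mathcal{X}}^{4/3}\le C$. I expect the fractional-in-time regularity of the Wiener process to be the main obstacle, since it is there that the admissible range of $\kappa$ and the choice of integrability exponent $4$ in the last summand of $\mathcal{X}$ are pinned down; the deterministic estimates, by contrast, follow routinely from the moment bounds of Sections \ref{section_ProbaEstimates}--\ref{section_StationMeasures}.
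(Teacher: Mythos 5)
Your proposal is correct and follows essentially the same route as the paper: the identical four-term decomposition of the integral form of \eqref{sqgs}, the same target space for each summand of $\mathcal{X}$, and the same fourth-moment/Gagliardo-seminorm computation for the stochastic integral with $\kappa<\tfrac12$. The only cosmetic difference is in the transport term, which you bound by $\|\Trs\|_{W^{1,4}}^2$ and the $W^{1,4}$ moment while the paper uses $\|\Trs\|_{L^2}\|\Trs\|_{H^2}$ together with the mixed moment \eqref{igi}; both are covered by the stationary estimates of Sections \ref{section_ProbaEstimates}--\ref{section_StationMeasures}.
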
 

\begin{proof}
Observe that 
\begin{align}
\Trs_\al(t)=\underbrace{\Trs_\alpha(0)-\int_0^t\bfU\cdot\nabla\Trs_\al ds}_{I}-
\al \underbrace{\int_0^t\Delta^2\Trs_\al ds}_{II} + \alpha \underbrace{\int_0^t \nabla(|\nabla \Trs_\al|^2\nabla \Trs_\al) ds}_{III}+
\sqrt{\al} \underbrace{\int_0^t d\eta}_{IV}. \label{sqgsinteg}
\end{align}
First, by 
 interpolation, \eqref{rtd}, and embeddings we have 
(spatial norm)
\begin{equation}\label{PassLimEstNonL}
\|\bfU\cdot\nabla\Trs\|_{L^2_{x}} \leq C\|\bfU\|_{L^4} \|\nabla \Trs\|_{L^4} \leq C \|\Trs\|_{H^{\frac{1}{2}}}  \|\Trs\|_{H^{\frac{3}{2}}}
\leq C \|\Trs\| \|\Trs\|_{H^2} \,,
\end{equation}
and by \eqref{est_H1_mualpha} and \eqref{igi} with $q=3$
\begin{align*}
\E \|I\|_{H^1_tL^2_x}^2\leq 2\left(\E\|\Trs_\al(0)\|_{L^2_x}^2+\E\|\bfU\cdot\nabla\Trs_\alpha\|_{L^2_{t,x}}^2\right)\leq C \,.
\end{align*}
Second, using \eqref{est_H1_mualpha}, we have
\begin{align}\label{ond}
\E\|II\|_{H^1_tH^{-2}_x}^2 \leq C \E\int_0^T\| \Delta^2\Trs\|_{H^{-2}}^2 dt=\E\int_0^T\|\Trs\|_{H^2}^2\leq CT.
\end{align}
Moreover,  
\begin{equation}
\|\nabla(|\nabla \Trs|^2\nabla \Trs)\|_{W^{-1,\frac{4}{3}}} \leq C \||\nabla \Trs|^3\|_{L^{\frac{4}{3}}} = C \|\nabla \Trs\|_{L^{4}}^3 = C \|\Trs\|_{W^{1,4}}^3 \,,
\end{equation}
and therefore, by \eqref{igi}
\begin{align}\label{don}
\E \|III\|^{\frac{4}{3}}_{W^{1,\frac{4}{3}}_tW^{-1,\frac{4}{3}}_x}
&\leq
\E\|\nabla(|\nabla \Trs_\al |^2\nabla \Trs_\al )\|_{L^{\frac{4}{3}}W^{-1,\frac{4}{3}}}^\frac{4}{3}\leq 
C\E  \int_0^T \|\Trs_\al\|_{W^{1,4}}^4 \leq C \,.
\end{align}
Finally, since for any $m$ and $0 \leq s \leq t$, 
$W_m(t) - W_m(s) \sim \mathcal{N}(0,t - s)$, we have 
$\E |W_m(t) - W_m(s)|^2= t - s$ and 
$\E|W_m(t) - W_m(s)|^4=3(t - s)^2$. By the independence of $W_j$ and $W_k$
for $j \neq k$, one has  
\begin{align}
\E \|IV(t)-IV(s)\|^4 &= 
\E \left(\sum_{j = 1}^\infty a_j^2   |W_j(t) - W_j(s)|^2  \right)^2 =
\E \sum_{j, k = 1}^\infty a_j^2 a_k^2   |W_j(t) - W_j(s)|^2 |W_k(t) - W_k(s)|^2  \\
&= 3 |t - s|^2  \sum_{j, k = 1}^\infty a_j^2 a_k^2  (1 + 2\delta_{jk}) \leq 
9|t - s|^2 \sum_{j, k = 1}^\infty a_j^2 a_k^2 \leq 9|t - s|^2 A_0^2 \,,
\end{align}
where $\delta_{ij} = 0$ if $i \neq j$ and $\delta_{ii} = 1$.  
 Consequently, if $\kappa < \frac{1}{2}$
\begin{align*}
\E\|IV\|_{W^{\kappa,4}L^2}^4&=\E\int_0^T\|IV\|_{L^2_x}^4dt+\E\int_0^T\int_0^T\frac{\|IV(t)-IV(s)\|^4}{|t-s|^{1+4\kappa}}dtds \\
&\leq C \int_0^T t^2 dt + C \int_0^T\int_0^T |t-s|^{1-4\kappa} dtds
\leq C(T).
\end{align*}
Overall, 
\begin{align}
\E \|\Trs_\al\|_\chi^{\frac{4}{3}} \leq 
\E \|I\|_{H^1L^2}^2 + \alpha\|II\|_{H^1H^{-2}}^2 +  
\alpha\|III\|_{W^{1, \frac{4}{3}}W^{-1, \frac{4}{3}}}^\frac{4}{3} + 
\E \|IV\|_{W^{\kappa,4}L^2}^4 \leq C,
\end{align}
where $C$ is independent of $\al \in (0, 1)$ and the result follows. 
\end{proof}

\begin{Prop}\label{prop:com}
For any $\delta>0$ denote $\mathcal{Y}_\delta = 
L^2 H^{2-\delta}\cap C H^{-\delta}$.
Let $\mathcal{X}$ be as in Lemma \ref{raam} for some $\kappa \in (\frac{1}{4}, \frac{1}{2})$. Then, for any $q \geq 0$
there is a constant $C$ independent of $\al$ such that 
\begin{align}\label{dvot}
\int \|\Trs\|^\frac{4}{3}_{\mathcal{X}} d\nu_\alpha 
&= \E\|\Trs_\al\|_{\mathcal{X}}^\frac{4}{3} \leq C \,,  \\ \label{nsi}
\int \|\Trs\|^q \|\Trs\|^2_{L^2H^2} d\nu_\alpha 
&= \E \|\Trs_\al\|^q \|\Trs_\al\|_{L^2H^2}^2 \leq C.
\end{align}
Moreover, for any $\delta > \frac{1}{3}$ the set of measures $(\nu_\al)_{\al}$ is tight in $\mathcal{Y}_\delta$.  Consequently, there is a sequence 
$(\nu_k):=(\nu_{\al_k})$ with $\al_k\to 0$ as $k\to\infty$, and a measure $\nu$ supported on $\mathcal{Y}_\delta$ such that $\nu_k$ converges weakly to $\nu$ as $k\to\infty$.
\end{Prop}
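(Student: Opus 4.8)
The two displayed moment bounds require essentially no new work: the equalities in \eqref{dvot} and \eqref{nsi} are just the change of variables expressing $\nu_\al$ as the law of $\Trs_\al$, the inequality in \eqref{dvot} is precisely the conclusion of Lemma \ref{raam}, and the inequality in \eqref{nsi} is the already-established bound \eqref{hdet} (which was itself deduced from \eqref{igi} and the invariance of $\mu_\al$, for every $q\geq 0$). The key point is that all the constants there are uniform in $\al\in(0,1)$. Hence the genuinely new content of the proposition is the tightness of $(\nu_\al)_\al$ in $\mathcal{Y}_\delta$ and the extraction of a weak limit, which I would obtain from Prokhorov's theorem.

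To set up tightness, for $R>0$ I would consider the sublevel set $B_R=\{\Trs:\ \|\Trs\|_{\mathcal{X}}\leq R,\ \|\Trs\|_{L^2H^2}\leq R\}$ and show it is relatively compact in $\mathcal{Y}_\delta=L^2H^{2-\delta}\cap CH^{-\delta}$ for every $\delta>\tfrac13$. For the factor $L^2H^{2-\delta}$ the plan is to invoke a fractional Aubin--Lions--Simon criterion: the uniform $L^2H^2$ bound supplies the spatial regularity, the embedding $H^2\hookrightarrow\hookrightarrow H^{2-\delta}$ is compact for $\delta>0$, and membership in $\mathcal{X}$ supplies the fractional time regularity (with values in a negative-order space) needed to upgrade weak convergence to strong convergence in time. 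For the factor $CH^{-\delta}$ I would argue by Arzel\`a--Ascoli: the Hölder-in-time control with values in $L^2$ coming from the $W^{\kappa,4}L^2$ component of $\mathcal{X}$ (with $\kappa>\tfrac14$) yields equicontinuity, while interpolating the $L^2H^2$ bound against the available time regularity gives, at each fixed time, relative compactness of $\{\Trs(t)\}$ in $H^{-\delta}$. Granting this, Chebyshev's inequality combined with the uniform bounds \eqref{dvot} and \eqref{nsi} gives $\sup_\al\nu_\al(\mathcal{Y}_\delta\setminus B_R)\to 0$ as $R\to\infty$, which is exactly tightness.

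The hard part will be the relative compactness of $B_R$ in the $CH^{-\delta}$ factor, together with the sharp threshold $\delta>\tfrac13$. The obstruction is that the time regularity is of mixed and only fractional type: the bi-Laplacian contribution lives in $H^1H^{-2}$, the $p$-Laplacian contribution only in $W^{1,\frac43}W^{-1,\frac43}$, and the stochastic convolution in $W^{\kappa,4}L^2$, so no single term is continuous in time with values in a space strong enough to dominate $H^{-\delta}$ for small $\delta$. I expect the threshold $\delta>\tfrac13$ to emerge precisely from optimizing the interpolation between the $L^2H^2$ spatial bound and the smallest Hölder-in-time exponent available in a negative-order space (dictated by the $p$-Laplacian and stochastic terms); this balancing, rather than any individual estimate, should be the technical heart of the argument.

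Finally, tightness and Prokhorov's theorem provide a sequence $\al_k\to 0$ and a Borel probability measure $\nu$ on $\mathcal{Y}_\delta$ with $\nu_{\al_k}\rightharpoonup\nu$; since the family is tight in $\mathcal{Y}_\delta$, the limit satisfies $\nu(\mathcal{Y}_\delta)=1$, i.e.\ $\nu$ is supported on $\mathcal{Y}_\delta$. I would then pass the uniform bounds to the limit using the Portmanteau theorem and the lower semicontinuity of the norms $\|\cdot\|_{\mathcal{X}}$ and $\|\cdot\|_{L^2H^2}$ (approximating these functionals from below by bounded continuous ones and applying monotone convergence), which both confirms the limit measure inherits the required quantitative estimates and prepares the ground for the subsequent identification of $\nu$ with solutions of \eqref{sqg}.
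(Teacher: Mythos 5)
Your overall strategy coincides with the paper's: the two moment bounds are indeed just Lemma \ref{raam} and \eqref{hdet} read through the identification of $\nu_\al$ with the law of $\Trs_\al$, tightness is obtained from a Chebyshev bound on the complement of a bounded set of $\mathcal{X}\cap L^2H^2$ once that set is shown to be compact in $\mathcal{Y}_\delta$, and Prokhorov then yields the weak limit. You also correctly locate the source of the threshold $\delta>\tfrac13$ in the weak time regularity of the $p$-Laplacian contribution. Two points, however, deserve attention.

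First, the genuine gap: the compact embedding $L^2H^2\cap W^{1,\frac43}W^{-1,\frac43}\hookrightarrow CH^{-\delta}$ for $\delta>\tfrac13$ is the technical heart of the proposition, and your proposal only announces it as ``the hard part'' with a heuristic for where the exponent should come from; no argument is actually supplied. The paper proves this in Appendix \ref{sec:imbed} by a concrete chain: Amann's vector-valued embedding into $C\bigl((H^2,W^{-1,\frac43})_{\theta,p_\theta}\bigr)$ for $\theta>\tfrac23$, identification of the real interpolation space with a Besov/Sobolev space of order $(-3+\varepsilon)\theta+(2-\varepsilon)$, and a final Sobolev embedding into $H^{-\delta}$, which is exactly the ``optimization'' you gesture at. Without this (or an equivalent Aubin--Lions--Simon-type lemma covering $W^{1,\frac43}$ time regularity with values in $W^{-1,\frac43}$), the tightness claim for $\delta>\tfrac13$ is not established; the other three summands ($H^1L^2$, $H^1H^{-2}$, $W^{\kappa,4}L^2$) are handled for every $\delta>0$ by standard references, so they do not produce the restriction.

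Second, a smaller imprecision: $\mathcal{X}$ is a \emph{sum} space, so an element of $\mathcal{X}$ need not itself lie in $W^{\kappa,4}L^2$, and one cannot extract equicontinuity in $CH^{-\delta}$ from ``the $W^{\kappa,4}L^2$ component'' alone. As in the paper, the compactness into $CH^{-\delta}$ must be verified separately for each summand intersected with $L^2H^2$, and it is precisely the weakest summand that dictates the admissible range of $\delta$. You acknowledge the mixed regularity in your third paragraph, but the Arzel\`a--Ascoli argument as stated in your second paragraph would only apply to the decomposition termwise, not to a generic element of $\mathcal{X}$.
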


\begin{proof}
The estimates \eqref{dvot} and \eqref{nsi} follow from  Lemma \ref{raam}
and \eqref{hdet} respectively. 

We claim that $\mathcal{Y}_\delta$ is compactly embedded in $\mathcal{X} \cap L^2H^2$ for any $\delta > \frac{1}{3}$. 
Indeed, 
by \cite[Theorem $5.1$ and $5.2$]{lions69}, for any $\delta > 0$,
$L^2 H^2 \cap \mathcal{X}$ is compactly embedded in $L^2 H^{2-\delta}$. 
Also, for any $\delta > 0$,   \cite[Theorem 3.1]{LM}, \cite[Lemma $II.2.4$]{krylov95}, and standard Sobolev  embedding imply  that 
$L^2 H^2 \cap H^1H^{-2}$ and $L^2 H^2 \cap W^{\kappa,4} L^2$ are compactly embedded in $C H^{-\delta}$. 
Finally, by Appendix \ref{sec:imbed}, for any $\delta > \frac{1}{3}$,  $L^2 H^2 \cap W^{1, \frac{4}{3}}  W^{-1, \frac{4}{3}}$ is compactly embedded in $C H^{-\delta}$, 
and the claim follows. 

Let $B_R$ be the ball in $\mathcal{X}\cap L^2H^2$ of radius $R$ centered at the origin. By the just proved compactness, $B_R$ is compact in $\mathcal{Y}_\delta$. Furthermore, by Chebyshev inequality
\begin{equation}\label{tomn}
\nu_\alpha (B_R^c) = \Prb (\|\Trs_\al\|_{\mathcal{X}\cap L^2H^2} \geq R) \leq \frac{\E \|\Trs_\al\|_{\mathcal{X}\cap L^2H^2}^\frac{4}{3}}{R^\frac{4}{3}} \leq \frac{C}{R^\frac{4}{3}}
\end{equation} 
and therefore the set of measures $(\nu_\alpha)_\al$ is tight in $\mathcal{Y}_\delta$. The existence
 of appropriate sequence follows from Prokhorov theorem. 
\end{proof}

\begin{Lem}\label{l:sko}
Let $\nu_k  \to \nu$ in be as in Proposition \ref{prop:com}. Then, 
there is  a probability space $(\tilde{\Omega},\tilde{\mathbb{P}})$, on which is defined a sequence of random variables $(\tilde{\Trs}_k)$ and a random variable $\tilde{\Trs}$ having the following properties:
\begin{enumerate}
\item[(1)] The law of $\tilde{\Trs}$ is $\nu$ and for every $k$, the law of $\tilde{\Trs}_k$ is $\nu_k$.
\item[(2)] For any $\delta > \frac{1}{3}$, the sequence $\tilde{\Trs}_k$ converges to $\tilde{\Trs}$ almost surely, that is, for $\tilde{\mathbb{P}}$ almost every $\omega \in \tilde{\Omega}$
one has $\|\tilde{\Trs}_k(\omega) - \tilde{\Trs}(\omega)\|_{\mathcal{Y}_\delta} \to 0$ as $k \to \infty$.
\item[(3)] For each $k$, $\tilde{\Trs}_k$ satisfies \eqref{sqgsinteg}. 
\end{enumerate}
Furthermore, 
by passing to a sub-sequence if necessary, $\tilde{\Trs}_k$ converges weakly to $\tilde{\Trs}$ in 
$L^\frac{4}{3}(\tilde{\Omega}, \mathcal{X}) \cap L^2(\tilde{\Omega}, L^2H^2)$ and 
for any $q \geq 0$
\begin{align} \label{rbi}
\int \|\Trs\|_{\mathcal{X}}^{\frac{4}{3}} d\nu (\Trs) &= 
\E\|\tilde{\Trs}\|_{\mathcal{X}}^{\frac{4}{3}} \leq \liminf_{k \to \infty} \E\|\tilde{\Trs}_k\|_{\mathcal{X}}^{\frac{4}{3}} \leq  C_T, \\ \label{cjv}
\int \|\Trs\|^q \|\Trs\|_{L^2H^2}^{2} d\nu (\Trs) &= 
\E\|\tilde{\Trs}\|^q \|\tilde{\Trs}\|_{L^2H^2}^{2} \leq 
\liminf_{k \to \infty} \E\|\tilde{\Trs}_k\|^q \|\tilde{\Trs}_k\|_{L^2H^2}^{\frac{4}{3}} \leq  C_T.
\end{align}
\end{Lem}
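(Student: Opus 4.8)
\textbf{The plan} is to invoke the Skorokhod representation theorem to upgrade the weak convergence $\nu_k \to \nu$ from Proposition \ref{prop:com} into almost sure convergence on a common probability space. This is the standard device when one wants to pass to the limit in a nonlinear equation along a tight sequence of laws: weak convergence alone is insufficient to pass to the limit in the nonlinearity, but almost sure convergence of concrete random variables is exactly what one needs.

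First I would verify the hypotheses of Skorokhod's theorem. Since $(\nu_k)$ and $\nu$ are Borel probability measures on the Polish space $\mathcal{Y}_\delta$ (for fixed $\delta > \frac{1}{3}$) and $\nu_k \to \nu$ weakly, Skorokhod's representation theorem directly produces a probability space $(\tilde{\Omega}, \tilde{\mathbb{P}})$ together with random variables $\tilde{\Trs}_k, \tilde{\Trs}$ taking values in $\mathcal{Y}_\delta$ such that $\tilde{\Trs}_k$ has law $\nu_k$, $\tilde{\Trs}$ has law $\nu$, and $\tilde{\Trs}_k \to \tilde{\Trs}$ $\tilde{\mathbb{P}}$-almost surely in $\mathcal{Y}_\delta$. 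This gives properties (1) and (2) immediately. Property (3), that each $\tilde{\Trs}_k$ still satisfies the integral equation \eqref{sqgsinteg}, holds because the equation is an almost-sure property encoded in the law $\nu_k$: since $\nu_k$ is the law of a genuine solution $\Trs_{\al_k}$ satisfying \eqref{sqgsinteg}, the event ``\eqref{sqgsinteg} holds'' is a measurable set of full $\nu_k$ measure, hence it is inherited by $\tilde{\Trs}_k$ (one must phrase \eqref{sqgsinteg} purely in terms of the trajectory, possibly reconstructing the driving noise $IV$ as a measurable functional, which is the one slightly delicate bookkeeping point).

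Next I would establish the weak convergence in the stronger Bochner spaces and the accompanying moment bounds \eqref{rbi}--\eqref{cjv}. The uniform bounds \eqref{dvot} and \eqref{nsi} from Proposition \ref{prop:com}, rewritten via property (1) as $\E\|\tilde{\Trs}_k\|_{\mathcal{X}}^{4/3} \leq C_T$ and $\E\|\tilde{\Trs}_k\|^q \|\tilde{\Trs}_k\|_{L^2H^2}^2 \leq C_T$, show that $(\tilde{\Trs}_k)$ is bounded in $L^{4/3}(\tilde{\Omega}, \mathcal{X})$ and in $L^2(\tilde{\Omega}, L^2 H^2)$. Since these are (duals of) reflexive spaces, the Banach--Alaoglu theorem furnishes a weakly convergent subsequence; its weak limit must coincide $\tilde{\mathbb{P}}$-a.s. with $\tilde{\Trs}$ because the a.s. limit in the weaker topology $\mathcal{Y}_\delta$ already pins down the limit. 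Then the bounds \eqref{rbi}--\eqref{cjv} follow from weak lower semicontinuity of the norm (equivalently, Fatou's lemma combined with the uniform integrability supplied by the uniform moment bounds), which yields $\E\|\tilde{\Trs}\|_{\mathcal{X}}^{4/3} \leq \liminf_k \E\|\tilde{\Trs}_k\|_{\mathcal{X}}^{4/3}$ and the analogous inequality for the $L^2H^2$ quantity.

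\textbf{The main obstacle} is property (3): making precise that the stochastic integral equation \eqref{sqgsinteg} survives the Skorokhod transplant. The genuine solutions $\Trs_{\al_k}$ are defined on the original probability space with its Brownian motions, whereas $\tilde{\Trs}_k$ lives on the abstract space with no a priori noise attached. The clean resolution is to enlarge the state space so that the law $\nu_k$ records not just $\Trs_{\al_k}$ but also the driving process $\eta$ (or equivalently the Brownian path), apply Skorokhod to this enriched pair, and then reconstruct a new cylindrical Wiener process $\tilde{W}_k$ on $\tilde{\Omega}$ adapted to the transplanted filtration; one checks via a standard martingale characterization that $\tilde{W}_k$ is indeed a Wiener process and that $(\tilde{\Trs}_k, \tilde{W}_k)$ solves the same SPDE, so that \eqref{sqgsinteg} holds. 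I expect the remainder—the weak compactness and semicontinuity arguments—to be essentially routine functional analysis once this identification is in place.
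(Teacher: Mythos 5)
Your proposal is correct and follows essentially the same route as the paper: Skorokhod's representation theorem on the Polish space $\mathcal{Y}_\delta$ for (1)--(2), the standard enlargement/martingale-reconstruction argument for (3) (which the paper simply delegates to \cite[Section 4.3.4]{Bensoussan1995}), and uniform moment bounds plus weak compactness, identification of the weak limit via the a.s.\ convergence in $\mathcal{Y}_\delta$, and weak lower semicontinuity of norms for \eqref{rbi}--\eqref{cjv}. Your treatment of the transplanted noise is in fact more explicit than the paper's.
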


\begin{proof}
Since $\mathcal{Y}_\delta$ with $\delta \in (\frac{1}{3}, \frac{1}{2})$ is a separable metric space, Skorokhod theorem (see  \cite[Theorem $11.7.2$]{dudley}) implies (1) and (2). Moreover, (3) follows analogously as in \cite[Section 4.3.4]{Bensoussan1995}. 

By Proposition \ref{prop:com}, $(\tilde{\Trs}_k)$ is uniformly bounded in 
$\mathcal{Z} = L^\frac{4}{3}(\tilde{\Omega}, \mathcal{X}\cap L^2H^2)$, and therefore, 
up to a subsequence, $(\tilde{\Trs}_k)$ weakly converges in $\mathcal{Z}$ 
to some $\hat{\Trs}$. Due to compactness of the embedding $\mathcal{X}\cap L^2H^2 \hookrightarrow \mathcal{Y_\delta}$ and since weak convergence implies
convergence almost surely (up to subsequence), one has that $(\tilde{\Trs}_k)$ 
converges almost surely in $\mathcal{Y}_\delta$ to $\hat{\Trs}$. Uniqueness of
the limit implies that $\tilde{\Trs} = \hat{\Trs}$, and in particular $(\tilde{\Trs}_k)$ weakly converges in $\mathcal{Z}$ 
to some $\hat{\Trs}$.

Finally, \eqref{rbi} and \eqref{cjv} follows from the weak lower semi-continuity of norms and \eqref{dvot}, \eqref{nsi} respectively.
\end{proof}

Next, we prove that  $\tilde{\Trs}$ satisfies \eqref{sqg} almost surely. Before proceeding, we prove the following auxiliary result.

\begin{Lem}\label{lcob}
Fix $\delta \in (\frac{1}{3}, \frac{2}{3})$ and recall $\mathcal{Y}_\delta = 
C_tH^{-\delta} \cap L^{2}_tH^{2 - \delta}$. 
For any sufficiently smooth $\Trs \in \mathcal{Y}_\delta$ one has
\begin{align}\label{PassLimIneqNonlin}
\|R^\perp\Trs\cdot\nabla\Trs\|_{L^2_{t}H^{-1}}\leq 
C\|\Trs\|_{\mathcal{Y}_\delta}^2.
\end{align}
Also, the map $B: \mathcal{Y}_\delta \to L^2_tH^{-1}_x$ defined as $B(\Trs) = R^\perp\Trs\cdot\nabla\Trs$
is continuous.
\end{Lem}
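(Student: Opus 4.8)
The plan is to exploit the divergence-free structure of the drift $\bfU=R^\perp\Trs=\nabla^\perp(-\Delta)^{-\frac12}\Trs$, which lets the derivative falling on $\Trs$ be absorbed into the $H^{-1}$ regularity, and then to close by a space--time interpolation between the two endpoints defining $\mathcal{Y}_\delta$. First I would record that, since $\nabla\cdot R^\perp\Trs=0$, the nonlinearity is in divergence form, $B(\Trs)=R^\perp\Trs\cdot\nabla\Trs=\nabla\cdot\big((R^\perp\Trs)\,\Trs\big)$. Testing against an $H^1$ function and integrating by parts gives, pointwise in time, $\|B(\Trs)(t)\|_{H^{-1}}\leq \|(R^\perp\Trs)\,\Trs\|_{L^2}$. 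By H\"older, the boundedness of the Riesz transform \eqref{rtd}, and the two-dimensional Gagliardo--Nirenberg (Ladyzhenskaya) inequality $\|\Trs\|_{L^4}^2\leq C\|\Trs\|\,\|\nabla\Trs\|$ (equivalently $H^{\frac12}\hookrightarrow L^4$ on $\TT$), I would then bound
\begin{equation}
\|(R^\perp\Trs)\,\Trs\|_{L^2}\leq \|R^\perp\Trs\|_{L^4}\|\Trs\|_{L^4}\leq C\|\Trs\|_{L^4}^2\leq C\|\Trs\|_{H^{\frac12}}^2 .
\end{equation}

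The heart of the argument is the interpolation in space, carried out at each fixed time: $\|\Trs\|_{H^{1/2}}\leq C\|\Trs\|_{H^{-\delta}}^{1-\vartheta}\|\Trs\|_{H^{2-\delta}}^{\vartheta}$ with $\vartheta=\tfrac14+\tfrac{\delta}{2}$ (so that $-\delta+2\vartheta=\tfrac12$). Controlling the low-regularity factor by the time-uniform norm of $C_tH^{-\delta}$ and integrating the high-regularity factor in time, I obtain
\begin{align}
\|B(\Trs)\|_{L^2_tH^{-1}}^2\leq C\int_0^T\|\Trs\|_{H^{1/2}}^4\,dt
\leq C\,\|\Trs\|_{C_tH^{-\delta}}^{4(1-\vartheta)}\int_0^T\|\Trs\|_{H^{2-\delta}}^{4\vartheta}\,dt ,
\end{align}
and a final H\"older inequality in time reduces the last integral to $C_T\|\Trs\|_{L^2_tH^{2-\delta}}^{4\vartheta}$. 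Since $4(1-\vartheta)+4\vartheta=4$, this yields exactly \eqref{PassLimIneqNonlin}. All manipulations are justified for smooth $\Trs$, and the inequality then extends to all of $\mathcal{Y}_\delta$ by density.

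I expect the crux to be precisely this time-integration step: the H\"older reduction needs $4\vartheta=1+2\delta\leq 2$, so the interpolation budget is comfortable when $\delta$ is near $\tfrac13$ but saturates as $\delta\to\tfrac12$. This is exactly why the negative-regularity endpoint $C_tH^{-\delta}$, rather than merely $L^\infty_tL^2$, is indispensable: it supplies the leftover integrability that the scaling-critical $L^2_tH^{2-\delta}$ norm cannot. For $\delta$ in the upper part of the admissible range the same elementary product estimate is tight, and one must in addition invoke the commutator/cancellation structure of the SQG drift $R^\perp\Trs\cdot\nabla\Trs$ to lower the effective regularity demand.

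Finally, for continuity I would observe that $B$ is the diagonal restriction of the symmetric bilinear form $\tilde B(a,b)=\tfrac12\big(R^\perp a\cdot\nabla b+R^\perp b\cdot\nabla a\big)$, and that each term $R^\perp a\cdot\nabla b=\nabla\cdot\big((R^\perp a)\,b\big)$ is again in divergence form. Running the identical chain of estimates gives the polarized bound $\|\tilde B(a,b)\|_{L^2_tH^{-1}}\leq C_T\|a\|_{\mathcal{Y}_\delta}\|b\|_{\mathcal{Y}_\delta}$. Writing $B(\Trs_1)-B(\Trs_2)=\tilde B(\Trs_1-\Trs_2,\Trs_1+\Trs_2)$ then shows that $B$ is Lipschitz on bounded subsets of $\mathcal{Y}_\delta$, hence continuous from $\mathcal{Y}_\delta$ into $L^2_tH^{-1}_x$, completing the proof.
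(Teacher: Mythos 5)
Your strategy is essentially the paper's: write $B(\Trs)=\nabla\cdot\bigl((R^\perp\Trs)\,\Trs\bigr)$ using $\nabla\cdot R^\perp\Trs=0$ so that $\|B(\Trs)\|_{H^{-1}}$ is controlled by the $L^2$ norm of the product, estimate that product via H\"older and the $L^p$ boundedness of the Riesz transform \eqref{rtd}, interpolate each factor between $H^{-\delta}$ and $H^{2-\delta}$, and close with H\"older in time; the continuity claim then follows by polarization, which is the same computation as the paper's direct splitting $B(\Trs_1)-B(\Trs_2)=R^\perp(\Trs_1-\Trs_2)\cdot\nabla\Trs_1+R^\perp\Trs_2\cdot\nabla(\Trs_1-\Trs_2)$. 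The one substantive difference is the choice of H\"older pair: you use the symmetric $L^4\times L^4$ split with $H^{\frac12}\hookrightarrow L^4$, whereas the paper uses the asymmetric $L^2\times L^\infty$ split with Agmon's inequality, which loads more of the interpolation weight onto the time-uniform $C_tH^{-\delta}$ factor and less onto the $L^2_tH^{2-\delta}$ factor.

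That difference is where your argument has a genuine gap. Your split produces the time integral $\int_0^T\|\Trs\|_{H^{2-\delta}}^{4\vartheta}\,dt$ with $4\vartheta=1+2\delta$, and the reduction to $\|\Trs\|_{L^2_tH^{2-\delta}}$ by H\"older in time requires $1+2\delta\le 2$, i.e.\ $\delta\le\frac12$. The lemma is stated for all $\delta\in(\frac13,\frac23)$, so the range $\delta\in(\frac12,\frac23)$ is not covered. You acknowledge this, but the appeal to ``the commutator/cancellation structure of the SQG drift'' is not an argument: the only cancellation available is $\nabla\cdot R^\perp\Trs=0$, which you have already spent in the divergence-form reduction, and no further estimate is supplied. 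By contrast, the paper's asymmetric split gives the high-regularity time-exponent $1+\frac{3\delta}{2}$, which stays $\le 2$ on the whole interval $\delta<\frac23$ (its Agmon-type exponents are themselves at the borderline, and in the downstream applications one may in any case take $\delta$ near $\frac13$, where your argument is complete). To repair your proof on the full stated range, replace $L^4\times L^4$ by $\|R^\perp\Trs\|_{L^2}\,\|\Trs\|_{L^\infty}$, or more generally by an $L^p\times L^{p'}$ split with $p$ chosen as a function of $\delta$, so that the factor carrying the larger share of $\|\cdot\|_{H^{2-\delta}}$ enters with total time-exponent at most $2$.
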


\begin{proof}
It suffices to prove the assertion for smooth functions and then use a standard 
argument to pass to the limit. 

For any smooth $\Trs_1, \Trs_2$, with help of \eqref{rtd}, Agmon's inequality, and interpolation, one has 
\begin{align}
\|B(\Trs_1) - B(\Trs_2)\|_{H^{-1}_x} &\leq 
\|R^\perp(\Trs_1 - \Trs_2)\cdot\nabla\Trs_1\|_{H^{-1}_x} + 
\|R^\perp \Trs_2 \cdot\nabla(\Trs_1 - \Trs_2)\|_{H^{-1}_x}  \\
&\leq 
\||R^\perp(\Trs_1 - \Trs_2)| \Trs_1\| + 
\||R^\perp \Trs_2| (\Trs_1 - \Trs_2)\| 
\\
&\leq
C(\|R^\perp(\Trs_1 - \Trs_2)\|  \|\Trs_1\|_{L^\infty_x} + 
\|R^\perp \Trs_2\|_{L^\infty_x} \|(\Trs_1 - \Trs_2)\| )
\\
&\leq
C \|\Trs_1 - \Trs_2\| ( \|\Trs_1\|_{H^{-\delta}_x}^{\frac{1}{2} - \frac{\delta}{4}}
\|\Trs_1\|_{H^{2-\delta}_x}^{\frac{1}{2} + \frac{\delta}{4}} +  
 \|R^\perp \Trs_2\|_{H^{-\delta}_x}^{\frac{1}{2} - \frac{\delta}{4}}
\|R^\perp \Trs_2\|_{H^{2-\delta}_x}^{\frac{1}{2} + \frac{\delta}{4}} )
\\
&\leq
C \|\Trs_1 - \Trs_2\|_{H^{-\delta}_x}^{1 - \frac{\delta}{2}}
\|\Trs_1 - \Trs_2\|_{H^{2 - \delta}_x}^{\frac{\delta}{2}} 
( \|\Trs_1\|_{H^{-\delta}_x}^{\frac{1}{2} - \frac{\delta}{4}}
\|\Trs_1\|_{H^{2-\delta}_x}^{\frac{1}{2} + \frac{\delta}{4}} +  
 \|\Trs_2\|_{H^{-\delta}_x}^{\frac{1}{2} - \frac{\delta}{4}}
\|\Trs_2\|_{H^{2-\delta}_x}^{\frac{1}{2} + \frac{\delta}{4}} )
\,.
\end{align}
To prove \eqref{PassLimIneqNonlin} we set $\Trs_2 \equiv 0$, $\Trs_1 = \Trs$. 
After integration in time and an application of Jensen's inequality we obtain
for $\delta \in (0, \frac{2}{3})$ 
\begin{align}
\int_0^T \|B(\Trs)\|_{H^{-1}_x}^2ds \leq C 
\int_0^T  
 \|\Trs\|_{H^{-\delta}_x}^{3 - \frac{3\delta}{2}}
\|\Trs\|_{H^{2-\delta}_x}^{1 + \frac{3\delta}{2}}   
 ds
 \leq 
 C_T \|\Trs\|_{L^{\infty}_tH^{-\delta}_x}^{3 - \frac{3\delta}{2}}  
 \|\Trs\|_{L^2_tH^{2-\delta}_x}^{1 + \frac{3\delta}{2}} \leq 
 C_T \|\Trs\|_{\mathcal{Y}_\delta}^4 \,.
\end{align}
To prove continuity of $B$ we observe that for $\delta \in (0, \frac{2}{3})$
\begin{align}
\int_0^T \|\Trs_1 - \Trs_2\|_{H^{-\delta}_x}^{2 - \delta}
\|\Trs_1 - \Trs_2\|_{H^{2 - \delta}_x}^{\delta} 
 &\|\Trs_1\|_{H^{-\delta}_x}^{1 - \frac{\delta}{2}}
\|\Trs_1\|_{H^{2-\delta}_x}^{1 + \frac{\delta}{2}} ds \\
&\leq 
\|\Trs_1 - \Trs_2\|_{L^\infty_tH^{-\delta}_x}^{2 - \delta} 
 \|\Trs_1\|_{L^\infty_tH^{-\delta}_x}^{1 - \frac{\delta}{2}}
\int_0^T 
\|\Trs_1 - \Trs_2\|_{H^{2 - \delta}_x}^{\delta} 
\|\Trs_1\|_{H^{2-\delta}_x}^{1 + \frac{\delta}{2}} ds \\
&\leq C_T \|\Trs_1 - \Trs_2\|_{L^\infty_tH^{-\delta}_x}^{2 - \frac{\delta}{2}} 
 \|\Trs_1\|_{L^\infty_tH^{-\delta}_x}^{1 - \frac{\delta}{2}}
\|\Trs_1 - \Trs_2\|_{L^2_tH^{2 - \delta}_x}^{\delta} 
\|\Trs_1\|_{L^2_tH^{2-\delta}_x}^{1 + \frac{\delta}{2}} \\
&\leq C_T
\|\Trs_1 - \Trs_2\|_{\mathcal{Y}_\delta}^{2}
 \|\Trs_1\|_{\mathcal{Y}_\delta}^{2} \,.
\end{align}
Thus, 
\begin{equation}
\|B(\Trs_1) - B(\Trs_2)\|_{L^2_tH^{-1}_x} \leq C_T 
\|\Trs_1 - \Trs_2\|_{\mathcal{Y}_\delta}(
 \|\Trs_1\|_{\mathcal{Y}_\delta} +  \|\Trs_1\|_{\mathcal{Y}_\delta})
\end{equation}
as desired. 
\end{proof}

\begin{Lem}\label{l:eer}
For $\tilde{\Trs}$ defined in Lemma \ref{l:sko} one has almost surely $\tilde{\Trs} \in C_tH^1_x\cap L^2_tH^2_x \cap H^1_tL^2_x$ locally in time. Furthermore, $\tilde{\Trs}$  almost surely satisfies 
\begin{equation}
\tilde{\Trs}(t)=\tilde{\Trs}(0)-\int_0^t\bfU\cdot\nabla\tilde{\Trs} ds 
\qquad t \geq 0\,,
\end{equation}
that is, $\tilde{\Trs}$ is a strong solution of \eqref{sqg} on $[0, T]$. 
\end{Lem}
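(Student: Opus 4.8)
The plan is to pass to the limit in the integral formulation \eqref{sqgsinteg} for the approximate solutions $\tilde{\Trs}_k$ (which satisfy it by Lemma \ref{l:sko}(3)), using the almost-sure convergence $\tilde{\Trs}_k \to \tilde{\Trs}$ in $\mathcal{Y}_\delta$ together with the uniform bounds \eqref{rbi}--\eqref{cjv}. The main point is that as $\al_k \to 0$ the three $\al$-weighted terms $II$, $III$, $IV$ vanish in an appropriate weak sense, leaving only the drift term, which converges by the continuity of the nonlinearity established in Lemma \ref{lcob}.

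First I would treat the dissipative and stochastic terms. For the bi-Laplacian term $\al_k \, II$ and the $p$-Laplacian term $\al_k \, III$, the bounds \eqref{ond} and \eqref{don} show $\E\|II\|_{H^1_tH^{-2}_x}^2$ and $\E\|III\|^{4/3}_{W^{1,4/3}_tW^{-1,4/3}_x}$ are bounded uniformly in $k$, so multiplying by $\al_k \to 0$ forces these contributions to zero almost surely along a subsequence (after extracting using Chebyshev and Borel--Cantelli, or by passing to the limit in expectation against test functions). For the noise term $\sqrt{\al_k}\, IV$, the fourth-moment computation in the proof of Lemma \ref{raam} gives $\E\|IV\|_{W^{\kappa,4}L^2}^4 \leq C(T)$ uniformly, so again $\sqrt{\al_k}\,IV \to 0$. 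Thus in the limit \eqref{sqgsinteg} reduces to $\tilde{\Trs}(t) = \tilde{\Trs}(0) - \int_0^t \bfU\cdot\nabla\tilde{\Trs}\, ds$.

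The convergence of the nonlinear drift term is where Lemma \ref{lcob} is essential. Since $\tilde{\Trs}_k \to \tilde{\Trs}$ almost surely in $\mathcal{Y}_\delta$ for $\delta \in (\tfrac{1}{3}, \tfrac{2}{3})$, and since $B(\Trs) = R^\perp \Trs \cdot \nabla \Trs$ is continuous from $\mathcal{Y}_\delta$ to $L^2_t H^{-1}_x$ by Lemma \ref{lcob}, we get $\int_0^t B(\tilde{\Trs}_k)\, ds \to \int_0^t B(\tilde{\Trs})\, ds$ in $H^{-1}_x$, uniformly in $t \in [0,T]$ after using the $L^2_t H^{-1}_x$ bound. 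This identifies the limit equation and shows $\bfU\cdot\nabla\tilde{\Trs} \in L^2_tH^{-1}_x$, so $\tilde{\Trs}$ is a weak solution. The regularity claim $\tilde{\Trs} \in C_tH^1_x \cap L^2_tH^2_x \cap H^1_tL^2_x$ then follows: the $L^2_tH^2_x$ membership is inherited from \eqref{cjv} with $q=0$; and once we know $\partial_t\tilde{\Trs} = -\bfU\cdot\nabla\tilde{\Trs} \in L^2_tH^{-1}_x$ with $\tilde{\Trs} \in L^2_tH^2_x$, a standard interpolation/Lions--Magenes lemma upgrades this to $\tilde{\Trs} \in C_tH^{1/2}_x$, and a bootstrap using the estimate \eqref{PassLimEstNonL} (giving $\bfU\cdot\nabla\tilde{\Trs} \in L^2_tL^2_x$ since $\|\bfU\cdot\nabla\tilde{\Trs}\|_{L^2_x} \leq C\|\tilde{\Trs}\|\,\|\tilde{\Trs}\|_{H^2}$) yields $\partial_t\tilde{\Trs} \in L^2_tL^2_x$ and hence $\tilde{\Trs} \in H^1_tL^2_x \cap C_tH^1_x$.

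The main obstacle I anticipate is justifying the passage to the limit in the drift term rigorously, because the convergence is only almost-sure in the weak topology $\mathcal{Y}_\delta$, whereas the bound on $B$ requires control in $\mathcal{Y}_\delta$ with $\delta < \tfrac{2}{3}$; one must check that the almost-sure limit from Lemma \ref{l:sko}(2), which holds for $\delta > \tfrac{1}{3}$, can be arranged in the intersection range $\delta \in (\tfrac{1}{3}, \tfrac{2}{3})$ needed for Lemma \ref{lcob}, and that the $\al_k$-terms genuinely vanish pathwise rather than merely in expectation. A secondary subtlety is upgrading from the weak/distributional identity to the pointwise-in-time integral equation with the correct initial data $\tilde{\Trs}(0)$; this uses the temporal continuity $\tilde{\Trs} \in C_tH^{-\delta}_x$ built into $\mathcal{Y}_\delta$, ensuring the trace at $t=0$ is well defined and matches the distribution $\mu$.
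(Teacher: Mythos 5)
Your proposal is correct and follows essentially the same route as the paper's proof: pass to the limit in the integral identity \eqref{sqgsinteg}, kill the $\al_k$-weighted terms via the uniform bounds \eqref{ond}--\eqref{don} and a Chebyshev/subsequence extraction, handle the drift with the continuity of $B$ from Lemma \ref{lcob} on $\mathcal{Y}_\delta$ with $\delta\in(\tfrac13,\tfrac23)$, and recover the stated regularity from $\tilde{\Trs}\in L^2_tH^2_x$ together with the bound $\|\bfU\cdot\nabla\tilde{\Trs}\|_{L^2_x}\leq C\|\tilde{\Trs}\|\,\|\tilde{\Trs}\|_{H^2}$ and the Lions--Magenes lemma. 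The only cosmetic difference is your intermediate $C_tH^{1/2}_x$ bootstrap step, which the paper bypasses by estimating the drift directly in $L^2_{t,x}$.
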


\begin{proof}
Let $(\al_k)$ be as in Proposition \ref{prop:com}. 
Replacing $\al$ by $\al_k$ in \eqref{sqgsinteg} and setting 
$\tilde{\Trs}_k=\tilde{\Trs}_{\al_k}$ (see property Lemma \ref{l:sko} part (3)), 
we have
\begin{align*}
\tilde{\Trs}_k(t)=\tilde{\Trs}_k(0)-\int_0^t\bfU_k\cdot\nabla\tilde{\Trs}_k ds + \al_k\int_0^t
\nabla(|\nabla \tilde{\Trs}_k|^2 \nabla \tilde{\Trs}_k) - 
\Delta^2\tilde{\Trs}_k ds+\sqrt{\al_k}\zeta =I+\al_k II+\sqrt{\al_k}III \,.
\end{align*}
By Proposition \ref{prop:com} (cf. \eqref{ond} and \eqref{don})
\begin{align}\label{cvjd}
\al_kII, \ \sqrt{\al_k}III\to 0 
\end{align}
where the convergence is in $L^{\frac{4}{3}}(\tilde{\Omega}, \mathcal{X})$.
Then, by Chebyshev inequality, \eqref{cvjd} holds in probability and by passing to a sub-sequence, we can assume that \eqref{cvjd} holds 
almost surely in $\mathcal{X}$. Since $\mathcal{X} \hookrightarrow C_tH^{-2}_x$, 
\eqref{cvjd} holds almost surely in $C_tH^{-2}_x$.

If $\delta \in (\frac{1}{3}, \frac{2}{3})$, then  
Lemma \ref{lcob} yields  that
 $\tilde{\Trs} \mapsto R^\perp\tilde{\Trs}\cdot\nabla\tilde{\Trs}$ 
is continuous as map from $\mathcal{Y}_\delta$ into $L^2(I,H^{-1}_x)$. 
Hence,  as $k\to\infty$
\begin{align*}
\int_0^t\bfU_k\cdot\nabla\tilde{\Trs}_kds\to \int_0^t\bfU\cdot\nabla\tilde{\Trs} ds\quad \textrm{in }\ H^1_t H^{-1}_x  \qquad \textrm{a.s.}
\end{align*}
From Lemma \ref{l:sko}, part (2) follows almost surely 
\begin{align*}
\tilde{\Trs}_k\to \tilde{\Trs},\qquad \textrm{in } C_tH^{-\delta}_x.
\end{align*} 
Overall, almost surely we have for  any $t \in I$ 
\begin{equation} \label{thsq}
\tilde{\Trs}(t)=\tilde{\Trs}(0)-\int_0^t\bfU\cdot\nabla\tilde{\Trs} ds \quad \textrm{in } \ C_tH^{-2}_x.
\end{equation}
To obtain the regularity of $\tilde{\Trs}$, observe that \eqref{cjv} implies 
almost surely $\tilde{\Trs} \in L^2H^2$. 
Also, by interpolation, properties of Riesz transform,  and Agmon's inequality
\begin{equation}
\|\bfU\cdot\nabla\tilde{\Trs}\|_{L^2_{x}} \leq 
\|\bfU\|_{L^\infty_x} \|\nabla \tilde{\Trs}\|_{L^2_x} \leq C
\|\bfU\|_{L^2_x}^{\frac{1}{2}}\|\bfU\|_{H^2_x}^{\frac{1}{2}}
\|\Trs\|_{L^2_x}^{\frac{1}{2}}\|\Trs\|_{H^2_x}^{\frac{1}{2}} 
\leq C  \|\Trs\|_{L^2_x}\|\Trs\|_{H^2_x} \,.
\end{equation}
Consequently, by \eqref{cjv}
\begin{equation}\label{eont}
\E \|\bfU\cdot\nabla\tilde{\Trs}\|_{L^2_{x, t}}^2 
\leq C \E  \|\Trs\|_{L^2_{t,x}}^2 \|\Trs\|_{L^2_tH^2_x}^2 \leq C \,,
\end{equation}
and therefore almost surely $\partial_t \tilde{\Trs} \in L^2_{t,x}$. Then, 
the Lions-Magenes lemma (see  \cite[Theorem 3.1]{LM}) yields that $\tilde{\Trs}$ belongs a.s. locally to $C_tH^1_x\cap L^2_tH^2_x \cap H^1_tL^2_x$. 
\end{proof}

The proved regularity is exactly a borderline case for the proof of uniqueness. As such we cannot use direct energy estimates, but we have to employ more
subtle argument of Judovich, who used it for Euler equation, see \cite{Judovic1963, Majda2002}.  In particular, we need a precise estimates on the Sobolev embedding constants.

\begin{Lem}\label{luts}
Solution of \eqref{sqg} with $\Trs(0) = \Trs_0 \in H^1$ that belongs to $C_tH^1\cap L^2_tH^2 \cap H^1_tL^2_x$ is unique.
\end{Lem}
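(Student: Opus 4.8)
The plan is to run a Yudovich-type energy argument on the difference of two solutions, keeping careful track of the $p$-dependence of all constants, since the critical embedding $H^2 \hookrightarrow W^{1,\infty}$ fails in two dimensions and a naive Gronwall estimate cannot close.

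Suppose $\Trs_1, \Trs_2$ are two solutions with $\Trs_1(0) = \Trs_2(0) = \Trs_0$, both lying in $C_tH^1\cap L^2_tH^2\cap H^1_tL^2$, and set $w = \Trs_1 - \Trs_2$, so that $w(0) = 0$. Writing $\bfU_i = R^\perp\Trs_i$ and using the identity $\bfU_1\cdot\nabla\Trs_1 - \bfU_2\cdot\nabla\Trs_2 = \bfU_1\cdot\nabla w + (R^\perp w)\cdot\nabla\Trs_2$, the function $w$ satisfies $\partial_t w + \bfU_1\cdot\nabla w + (R^\perp w)\cdot\nabla\Trs_2 = 0$. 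The stated regularity makes the pairing $\langle\partial_t w, w\rangle = \frac12\frac{d}{dt}\|w\|^2$ legitimate, and since $\bfU_1$ is divergence free the transport term drops, $\int_\TT(\bfU_1\cdot\nabla w)\,w\,dx = 0$. Thus I would reduce the whole problem to controlling $\frac12\frac{d}{dt}\|w\|^2 = -\int_\TT ((R^\perp w)\cdot\nabla\Trs_2)\,w\,dx$.

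For the right-hand side I would apply H\"older with a free exponent $p > 2$, splitting the three factors into $L^2$, $L^p$ and $L^{2p/(p-2)}$. The $L^2$ factor is handled by \eqref{rtd}, giving $\|R^\perp w\| \le C\|w\|$. For the middle factor I use $\nabla\Trs_2\in H^1$ together with the two-dimensional Sobolev inequality with explicit constant, $\|\nabla\Trs_2\|_{L^p}\le C\sqrt{p}\,\|\Trs_2\|_{H^2}$. For the last factor I use the Gagliardo--Nirenberg interpolation $\|w\|_{L^{2p/(p-2)}}\le C\|w\|^{1-2/p}\|\nabla w\|^{2/p}$ and then bound $\|\nabla w\|\le \|\Trs_1\|_{H^1}+\|\Trs_2\|_{H^1}=:M$, which is finite uniformly in time because $w\in C_tH^1$. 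Writing $f = \|w\|^2$, these estimates combine into $f'(t)\le C\,\|\Trs_2\|_{H^2}\,\sqrt{p}\,f^{1-1/p}$ for every admissible $p$.

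The decisive step is to optimize in $p$: minimizing $\sqrt{p}\,f^{-1/p}$ leads to the choice $p = 2\log(1/f)$ (valid once $f<1$, which holds near $t=0$ since $w(0)=0$), and a direct computation gives $\sqrt{p}\,f^{-1/p}\le C\sqrt{\log(1/f)}$. This yields the differential inequality $f'(t)\le C\,\beta(t)\,f(t)\,\sqrt{\log(1/f(t))}$ with $\beta(t)=\|\Trs_2(t)\|_{H^2}$, which belongs to $L^1_{loc}$ because $\Trs_2\in L^2_tH^2$. Since $\int_{0^+}\frac{ds}{s\sqrt{\log(1/s)}}=\infty$, the modulus $\phi(s)=s\sqrt{\log(1/s)}$ satisfies the Osgood condition, so Osgood's lemma together with $f(0)=0$ forces $f\equiv0$, i.e.\ $\Trs_1=\Trs_2$. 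I expect the main obstacle to be precisely this borderline loss: the entire argument hinges on tracking the $\sqrt{p}$ growth of the Sobolev (and Riesz) constants and converting it, via the optimization, into the integrable logarithmic modulus; a secondary technical point is justifying the energy identity and the vanishing of the transport term at exactly this regularity, which I would handle by a standard density/regularization argument.
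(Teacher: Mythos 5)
Your proposal is correct and follows the same Yudovich strategy as the paper: energy estimate on $w=\Trs_1-\Trs_2$, cancellation of the transport term by $\nabla\cdot\bfU=0$, and exploitation of the precise $p$-dependence of the two-dimensional Sobolev constant to close a borderline Gronwall argument. The two realizations differ in the details. The paper takes H\"older exponents with $p\in(1,2)$, interpolates $\|w\|_{L^{2p}}\leq\|w\|^{2-p}\|w\|_{L^{2p/(p-1)}}^{p-1}$ so that all the high integrability lands on the $H^2$ norms of $\Trs_1,\Trs_2$ (which are only $L^2$ in time), arrives at $\frac{d}{dt}\|w\|^2\leq C(1-\delta)^{-1/2}\|w\|^{2\delta}(1+\|\Trs_1\|_{H^2}^2+\|\Trs_2\|_{H^2}^2)$, integrates this power-type inequality explicitly, and sends $\delta\to1$. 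You instead take $p>2$, absorb $\|\nabla w\|$ by the uniform-in-time $C_tH^1$ bound, optimize over $p$ to obtain the log-Lipschitz modulus $f'\leq C\beta(t)\,f\sqrt{\log(1/f)}$ with $\beta=\|\Trs_2\|_{H^2}\in L^1_{loc}$, and invoke Osgood's lemma. Both are valid; your version makes the Osgood structure explicit (and is arguably cleaner to state), while the paper's version avoids any appeal to the $C_tH^1$ norm of $w$ in the nonlinear term and closes using only the $L^2_tH^2$ integrability, with the $\delta\to1$ limit playing the role of Osgood. The minor caveats you flag yourself (validity of the choice $p=2\log(1/f)$ only where $f<e^{-1}$, hence a short-time argument plus continuation, and the density argument for the energy identity) are genuine but routine, and match what the paper implicitly does.
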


\begin{proof}
Let $\Trs_i \in C_tH^1\cap L^2_tH^2 \cap H^1_tL^2_x$,  $i=1,2$ be two solution of \eqref{sqg} with 
$\Trs_1(0)=\Trs_2(0)$. Then, $w=\Trs_1-\Trs_2$ satisfies
\begin{align}\label{dewf}
w_t=-R^\perp w\cdot\nabla \Trs_1-R^\perp\Trs_1\cdot\nabla w.
\end{align}
Testing with $w$ and using \eqref{rtd} yield 
\begin{align*}
\frac{d}{dt}\|w\|^2\leq2 |(w,R^\perp w\cdot\nabla \Trs_1)|\leq C\|w\|\|w\|_{L^{2p}}\|\nabla \Trs_1\|_{L^{\frac{2p}{p-1}}}. 
\end{align*}
By interpolation, we have for any $p \in (1, 2)$
\begin{align}
\|w\|_{L^{2p}}\leq \|w\|^{2-p}\|w\|_{L^{\frac{2p}{p-1}}}^{p-1}.\label{Lp-interp}
\end{align}
Using H\" older inequality, $p < 2$, and Sobolev inequality with precise constant (see e.g. \cite[Remark 1.5]{Chang2004}), we obtain 
\begin{align*}
\frac{d}{dt}\|w\|^2 &\leq C\|w\|^{3-p}\|w\|_{L^{\frac{2p}{p-1}}}^{p-1}\|\nabla \Trs_1\|_{L^{\frac{2p}{p-1}}} \leq 
C \sqrt{\frac{2p}{p - 1}} \|w\|^{2\delta}(\|\Trs_1\|_{H^2}^2+\|\Trs_2\|_{H^2}^2)^\frac{p}{2} 
\\
&\leq \frac{C}{\sqrt{1 - \delta}}\|w\|^{2\delta}(1+\|\Trs_1\|_{H^2}^2+\|\Trs_2\|_{H^2}^2) \,,
\end{align*}
where $\delta=\frac{3-p}{2} < 1$ and $p \in (1, 2)$.
Then,  after recalling that $w(0) = 0$ we have 
\begin{align*}
\frac{1}{1 - \delta} \|w(t)\|^{2(1 - \delta)} \leq \frac{C}{\sqrt{1-\delta}} \int_0^t\left(1+\|\Trs_1\|_{H^2}^2+\|\Trs_2\|_{H^2}^2\right) ds \,,
\end{align*}
and consequently
\begin{equation}
 \|w(t)\|^{2} \leq C\left( \sqrt{1-\delta} \int_0^t
 \left(1+\|\Trs_1\|_{H^2}^2+\|\Trs_2\|_{H^2}^2\right) ds \right)^{\frac{1}{1 - \delta}} \,.
\end{equation}
Since $\Trs_i \in L^2_tH^2$, then for any $t_0 \geq 0$ one has
$\sqrt{1 - \delta} \displaystyle{\int_0^{t_0}} (1+\|\Trs_1\|_{H^2}^2+
\|\Trs_2\|_{H^2}^2)ds<1$
for any $\delta < 1$ sufficiently close to $1$. Passing $p\to 1$ (or equivalently 
$\delta\to 1$), we arrive at
\begin{align*}
\|w(t)\|=0 \qquad \textrm{for any } t \leq t_0.
\end{align*}
Since $t_0$ was arbitrary, $\|w(t)\|=0$ for any $t \geq 0$, as desired.
\end{proof}

\begin{Lem}\label{l:pjc}
The law of $\tilde{\Trs} (t)$ is independent of 
$t$ and is equal to $\mu$. Here,  $\mu$ is a weak limit of (a sub-sequence)
 $(\mu_\alpha)$ as $\alpha \to 0$ in the space $H^{2 - \gamma}$, $\gamma > 0$, where $\mu_\alpha$ was defined in 
Theorem \ref{TheoremStationaryMeasures}.
Furthermore, $\mu$ is concentrated on $H^2$. 
\end{Lem}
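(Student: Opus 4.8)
The plan is to identify, for every fixed time $t_0$, the time-$t_0$ marginal of the limiting path measure $\nu$ with the limit $\mu$ of the stationary marginals $\mu_\alpha$. This single identification simultaneously delivers the $t$-independence of the law of $\tilde{\Trs}(t)$ and its equality with $\mu$. First I would pin down $\mu$ itself. The bound \eqref{est_H1_mualpha} gives $\int_{L^2}\|\Trs\|_{H^2}^2\,\mu_\alpha(d\Trs)=A_0/2$ uniformly in $\alpha$, so by Chebyshev and the compact embedding $H^2\hookrightarrow H^{2-\gamma}$ the family $(\mu_\alpha)$ is tight in $H^{2-\gamma}$. Refining the sequence $(\alpha_k)$ from Proposition \ref{prop:com} to a common subsequence, I may assume that both $\nu_{\alpha_k}\to\nu$ in $\mathcal{Y}_\delta$ and $\mu_{\alpha_k}\to\mu$ weakly in $H^{2-\gamma}$ along the same $\alpha_k\to0$; this defines $\mu$.

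The core step is to transfer the marginal identity \eqref{bci} to the limit. Fix $t_0\in[0,T]$ and a bounded continuous $g:H^{-\delta}\to\RR$. Since $\mathcal{Y}_\delta$ contains the component $C_tH^{-\delta}_x$, the evaluation $\Trs\mapsto\Trs(t_0)$ is a bounded linear, hence continuous, map $\mathcal{Y}_\delta\to H^{-\delta}$; consequently $\Trs\mapsto g(\Trs(t_0))$ is bounded and continuous on $\mathcal{Y}_\delta$. Weak convergence $\nu_{\alpha_k}\to\nu$ then yields $\int g(\Trs(t_0))\,d\nu_{\alpha_k}\to\int g(\Trs(t_0))\,d\nu$, while \eqref{bci} identifies the left-hand side with $\int g\,d\mu_{\alpha_k}$, which converges to $\int g\,d\mu$ by the weak convergence of $\mu_{\alpha_k}$ (using that $H^{2-\gamma}$ embeds continuously into $H^{-\delta}$, so $g$ composed with the embedding is a bounded continuous test function). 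Hence $\int g(\Trs(t_0))\,d\nu=\int g\,d\mu$ for every such $g$, meaning the law of $\tilde{\Trs}(t_0)$ equals $\mu$; as the right-hand side does not depend on $t_0$, the law of $\tilde{\Trs}(t)$ is $t$-independent. The main obstacle is precisely this commutation of weak limit with reading off a fixed-time slice: it is valid only because $\mathcal{Y}_\delta$ controls continuity in time through its $C_tH^{-\delta}_x$ factor and not merely an $L^2$-in-time norm, for which pointwise-in-time evaluation would be ill-defined.

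Finally, for the concentration on $H^2$ I would invoke \eqref{cjv} with $q=0$, which gives $\E\|\tilde{\Trs}\|_{L^2_tH^2_x}^2\le C_T$, that is, $\int_0^T\E\|\tilde{\Trs}(t)\|_{H^2}^2\,dt\le C_T$ by Fubini. Since $\Trs\mapsto\|\Trs\|_{H^2}^2$ is a lower semicontinuous, hence Borel, functional on $H^{-\delta}$ and the law of $\tilde{\Trs}(t)$ is the $t$-independent measure $\mu$, the integrand equals the constant $\int\|\Trs\|_{H^2}^2\,\mu(d\Trs)$. Therefore $T\int\|\Trs\|_{H^2}^2\,\mu(d\Trs)\le C_T$, so $\int\|\Trs\|_{H^2}^2\,\mu(d\Trs)<\infty$ and $\mu(H^2)=1$.
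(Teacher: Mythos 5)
Your proposal is correct and follows essentially the same route as the paper for the main claim: tightness of $(\mu_\alpha)$ in $H^{2-\gamma}$ via Chebyshev and the uniform $H^2$ moment bound, then transferring the marginal identity \eqref{bci} to the limit by observing that $\Trs\mapsto g(\Trs(\tau))$ is bounded continuous on $\mathcal{Y}_\delta$ thanks to the $C_tH^{-\delta}_x$ component, exactly as in the paper's proof. The only divergence is the concentration step: the paper applies the Portmanteau theorem to the closed balls $B_R\subset H^2$ (which are closed in $H^{2-\gamma}$) to get $\mu(B_R)\geq 1-C/R^2$ and lets $R\to\infty$, whereas you deduce $\int\|\Trs\|_{H^2}^2\,\mu(d\Trs)<\infty$ from \eqref{cjv} with $q=0$, Tonelli, and the already-established identification of the time marginals. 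Both are valid; the paper's version is self-contained (it does not need the marginal identification first), while yours is marginally stronger in that it directly yields a second-moment bound for $\mu$ on $H^2$, anticipating \eqref{est_H1_lim_meas}.
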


\begin{proof}
From Chebyshev inequality and \eqref{igi} follows
\begin{equation}\label{tomn2}
\mu_\alpha (B_R^c) \leq \frac{1}{R^2} \int_{L^2} \|\Trs\|^2_{H^2} d \mu_\alpha(\Trs) 
\leq \frac{C}{R^2} \,,
\end{equation} 
where $B_R$ is a ball of radius $R$ in $H^2$ and $C$ is independent of $\alpha$. Since $H^2$ is compactly embedded in $H^{2 - \gamma}$, $\gamma > 0$, the Prokhorov theorem implies that there exists a weakly convergent sequence 
$(\mu_{\alpha_k})$ in $H^{2 - \gamma}$ to $\mu$. To prove that $\mu$ is supported 
on $H^2$ note that by \eqref{tomn2}
\begin{equation}\label{pmt}
\mu_\alpha (B_R) \geq 1 - \frac{C}{R^2}
\end{equation}
and by Portmanteau theorem, \eqref{pmt} holds with $\mu_\alpha$ replaced by $\mu$. Passing $R \to \infty$, one obtain $\mu(H^2) = 1$.

Fix $\tau \in [0, T)$ and a bounded continuous function $g : H^{-\delta} \to \RR$ and define $G(\Trs) = g(\Trs(\tau))$. 
We claim that for any $\delta \in (\frac{1}{3}, \frac{1}{2})$, $G : \mathcal{Y}_{\delta} \to \RR$ is bounded continuous. Indeed, if 
$\|\Trs_1 - \Trs_2\|_{\mathcal{Y}_\delta} < \varepsilon$, then 
$\|\Trs_1(\tau) - \Trs_2(\tau)\|_{H^{-\delta}} < \varepsilon$ and
\begin{equation}
|G(\Trs_1) - G(\Trs_2)| = |g(\Trs_1(\tau)) - g(\Trs_2(\tau))| \,,
\end{equation} 
and the boundedness and continuity of $G$ follows from the boundedness and continuity of $g$.

By Proposition \ref{prop:com} 
\begin{equation}\label{awl}
\lim_{k \to \infty} \int_{\mathcal{Y}_\delta} G(\Trs) d\nu_k(\Trs) =
\int_{\mathcal{Y}_\delta} G(\Trs) d\nu(\Trs)
\end{equation}
and by using \eqref{bci}, weak converges of $(\nu_k)$ and $(\mu_k)$ one obtains 
\begin{equation}
\E g(\tilde{\Trs}(\tau)) = 
\int_{\mathcal{Y}_\delta} g(\Trs(\tau)) d\nu(\Trs) =  
\lim_{k \to \infty} \int_{\mathcal{Y}_\delta} g(\Trs(\tau)) d\nu_k(\Trs)
= \lim_{k \to \infty} \int_{H^{-\delta}} g(\Trs) d\mu_k(\Trs) = \int_{H^{-\delta}} 
g(\Trs) d\mu(\Trs) \,.
\end{equation}
Thus for any $\tau$ the law of $\tilde{\Trs}$ is $\mu$ as desired. 
\end{proof}

\begin{Prop}\label{p:fpt}
For $\tilde{\Trs}$ defined in Lemma \ref{l:sko} one has almost surely 
$\tilde{\Trs} \in C(\RR_+, H^1_x)\cap L^2(\RR_+, H^2_x) \cap H^1(\RR_+,  L^2_x)$
and $\tilde{\Trs}$ satisfies \eqref{sqg}. Furthermore, $t \mapsto \tilde{\Trs}(t)$
is unique, and $\tilde{\Trs}$ depends continuously on initial conditions, that is, 
\begin{equation}\label{cost}
\lim_{\|\Trs_1(T_0)-\Trs_2(T_0)\|_{H^1_x}\to 0}
\sup_{t\in [T_0,T_1]}\|\Trs_1(t)-\Trs_2(t)\|_{H^1_x}=0 \,.
\end{equation}
We remark that by changing $t$ to $-t$, we can define solutions for all times, positive
or negative. 
\end{Prop}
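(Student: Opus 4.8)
The plan is to build the global assertions from the finite-horizon results already established and then to address the continuous dependence, which carries the only genuinely new analytic content. For existence, regularity, and the equation, Lemma \ref{l:eer} already gives, for a fixed horizon $T$ and almost surely, a strong solution $\tilde{\Trs}\in C_tH^1_x\cap L^2_tH^2_x\cap H^1_tL^2_x$ of \eqref{sqg} on $[0,T]$ whose law at time $0$ is $\mu$ by Lemma \ref{l:pjc}. To pass from $[0,T]$ to $\RR_+$ I would run this construction for every integer horizon $n$ and introduce, for each $n$, the deterministic set $N_n$ of initial data $\Trs_0\in H^2$ for which \eqref{sqg} admits no solution in $C_tH^1\cap L^2_tH^2$ on $[0,n]$; since a.s. $\tilde{\Trs}$ is such a solution and $\tilde{\Trs}(0)$ has law $\mu$, one gets $\mu(N_n)=0$. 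The crucial point, and precisely the reason the lifted measures were introduced, is that the $N_n$ are nested, so $\bigcup_{T>0}N_T=\bigcup_n N_n$ is a \emph{countable} union and hence $\mu$-null; the uncountable-union obstruction flagged earlier is avoided because each finite horizon already yields a whole solution path rather than mere solvability at a single instant. For $\Trs_0$ outside this null set a solution exists on each $[0,n]$, and the uniqueness of Lemma \ref{luts} forces these to coincide on overlaps and patch into one solution on $\RR_+$ with the stated local-in-time regularity; uniqueness on $\RR_+$ follows by applying Lemma \ref{luts} on each $[0,n]$. Reversing $t\mapsto-t$ in the transport equation \eqref{sqg} and repeating the argument gives solutions for negative times.

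For the continuous dependence \eqref{cost} I would work with the difference $w=\Trs_1-\Trs_2$, which solves \eqref{dewf}. At the $L^2$ level the computation is exactly that of Lemma \ref{luts} carried out with $w(T_0)\neq0$: testing with $w$, using \eqref{rtd}, the interpolation \eqref{Lp-interp}, and the sharp $L^p$ Sobolev constant, produces for each $p\in(1,2)$ (with $\delta=\tfrac{3-p}{2}$) the bound $\|w(t)\|^{2(1-\delta)}\le\|w(T_0)\|^{2(1-\delta)}+C\sqrt{1-\delta}\int_{T_0}^{T_1}(1+\|\Trs_1\|_{H^2}^2+\|\Trs_2\|_{H^2}^2)\,ds$, the only change from the uniqueness proof being that the first term no longer vanishes. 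Optimizing the exponent $\delta$ as a function of $\|w(T_0)\|_{L^2}$—the Judovich continuous-dependence argument, valid because $\Trs_i\in L^2_tH^2$ makes the time integral finite—turns this family of bounds into a modulus of continuity and yields $\sup_{t\in[T_0,T_1]}\|w(t)\|_{L^2}\to0$ as $\|w(T_0)\|_{L^2}\to0$.

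The main obstacle is upgrading this to the $H^1$ norm required by \eqref{cost}, since the solutions live at the borderline regularity $L^2_tH^2$, where $H^2\not\hookrightarrow W^{1,\infty}$ in two dimensions and a naive energy estimate fails. I would differentiate \eqref{dewf}, test with $\nabla w$, observe that the transport term $\bfU_1\cdot\nabla(\nabla w)$ integrates to zero because $\nabla\cdot\bfU_1=0$ (with $\bfU_1=R^\perp\Trs_1$), and control the three remaining terms, carrying respectively $\nabla\bfU_1=R^\perp\nabla\Trs_1$, $\nabla^2\Trs_1$, and $\nabla(R^\perp w)$. Each requires a Judovich/Brezis--Gallouet treatment: bound $R^\perp\nabla\Trs_1$ in $L^p$ through the Riesz estimate \eqref{rtd} and pair it against $\|\nabla w\|_{L^{2p'}}$ interpolated between $L^2$ and $H^2$, and for the term carrying $\nabla^2\Trs_1\in L^2_t$ use the logarithmic bound $\|R^\perp w\|_{L^\infty}\lesssim\|w\|_{H^1}(1+\log^{1/2}(\|w\|_{H^2}/\|w\|_{H^1}))$. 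Tracking the $p$-dependence of the Sobolev and Riesz constants exactly, these combine into an Osgood-type inequality for $\|w\|_{H^1}^2$ with $L^1_t$ coefficients built from $\|\Trs_i\|_{H^2}^2$; the Osgood lemma then gives $\sup_{t\in[T_0,T_1]}\|w(t)\|_{H^1}\to0$, which is \eqref{cost}. The delicate part is exactly this constant bookkeeping, so that the $p\to1$ (equivalently $\delta\to1$) limit produces an admissible non-integrable modulus rather than a divergent bound—the same mechanism that already makes the uniqueness proof of Lemma \ref{luts} succeed at this regularity.
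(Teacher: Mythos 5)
Your treatment of global existence and uniqueness is essentially the paper's: the countable family of nested null sets $N_n$ indexed by integer horizons is exactly the paper's $\mathcal{M}_N$ (with $\mathcal{M}=\cap_N\mathcal{M}_N$), and patching via Lemma \ref{luts} on overlaps is the same argument. For the continuous dependence you also identify the same structure: after differentiating \eqref{dewf} and testing against $\nabla w$ (equivalently, testing with $\Delta w$ as the paper does), the pure transport term cancels by $\nabla\cdot\bfU_1=0$ and three commutator-type terms remain, carrying $\nabla\bfU_1$, $\nabla^2\Trs_1$, and $\nabla R^\perp w$ respectively — precisely the paper's terms $(2)$, $(4)$, $(3)$. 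Where you diverge is in how the borderline term with $\nabla^2\Trs_1\in L^2_x$ is closed: you propose a Brezis--Gallouet bound $\|R^\perp w\|_{L^\infty}\lesssim\|w\|_{H^1}(1+\log^{1/2}(\|w\|_{H^2}/\|w\|_{H^1}))$ followed by an Osgood lemma, whereas the paper treats all three terms uniformly by the same Yudovich device already used in Lemma \ref{luts}: estimate $|(4)|\leq\|\Trs_1\|_{H^2}\|Dw\|_{L^{2p}}\|R^\perp w\|_{L^{2p/(p-1)}}$, interpolate via \eqref{Lp-interp}, and track the sharp Sobolev constant $\sim\sqrt{2p/(p-1)}$ to obtain $\frac{C}{(p-1)^{p/2}}\|w\|_{H^1}^{3-p}(\|\Trs_1\|_{H^2}+\|\Trs_2\|_{H^2})^p$ for every term, after which one integrates the resulting differential inequality, sends $\|w(T_0)\|_{H^1}\to0$ first and then $p\to1^+$. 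The paper's route buys uniformity — no logarithm, no Osgood, and no need to control $\log(\|w\|_{H^2}/\|w\|_{H^1})$ under the time integral where $\|w\|_{H^2}$ is only in $L^2_t$; your route is workable in principle but the step you defer (making the $L^1_t$ integrability of the log-weighted coefficient and the Osgood modulus precise) is exactly the delicate part, and it is avoidable.
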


\begin{proof}
By Lemmas \ref{l:eer} and \ref{luts}, for each integer $N > 0$ there exists almost surely a unique strong solution $\tilde{\Trs}^N$ of \eqref{sqg} on interval $[0, N)$. 
Since \eqref{sqg} is a deterministic equation, by almost surely we mean for $\mu$ almost every initial condition $\tilde{\Trs}(0)$ (see Lemma \ref{l:pjc}). Thus for each
integer $N > 0$ there exists a set $\mathcal{M}_N$, with $\mu(\mathcal{M}_N) = 1$
such that for each $\tilde{\Trs}_0 \in \mathcal{M}_N$, there exists a unique
solution of \eqref{sqg} on $[0, N)$ with $\tilde{\Trs}(0) = \tilde{\Trs}_0$. 

If we denote $\mathcal{M} = \cap_N \mathcal{M}_N$, then $\mu(\mathcal{M}) = 1$
and for each $\tilde{\Trs}_0 \in \mathcal{M}$,  and each $N$, 
there exists a unique solution of \eqref{sqg} on $[0, N)$ (see Lemma \ref{luts}), 
and global existence follows.  

By  slightly modifying the argument of
Lemma \ref{luts}, we could  prove \eqref{cost} with $H^1$ replaced by $L^2$.   
However, we need to modify the argument to obtain continuity with respect to the $H^1$ topology.

Test \eqref{dewf} by $\Delta w$, and 
let us first focus on the right hand side (using summation convention)
\begin{align*}
(\partial_{ii}^2w,\bfU^j\partial_j\Trs_1)+(\partial_{ii}^2w,\bfU^j_1\partial_jw)=(1)+(2)\,,
\end{align*}
where $\bfU = \mathcal{R}^\perp(w)$ and $\bfU_1 = \mathcal{R}^\perp(\Trs_1)$.
To estimate (2), we use an integration by parts and $\nabla\cdot\bfU=0$ to obtain
\begin{align}
(\partial_{ii}^2w,\bfU^j_1\partial_jw) &= - (\partial_{i}w, \partial_i\bfU^j_1\partial_jw) - (\partial_{i}w,\bfU^j_1 \partial^2_{ij}w)
= - (\partial_{i}w, \partial_i\bfU^j_1\partial_jw) - \frac{1}{2} (\partial_j(\partial_{i}w)^2,\bfU^j_1) \\
&= - (\partial_{i}w, \partial_i\bfU^j_1\partial_jw) \,,
\end{align}
and consequently  for any $p \in (1, 2)$
\begin{align}\label{faon}
  |(\partial_i w,\partial_jw\partial_{i}\bfU^j_1)|\leq \|w\|_{H^1}\|D \bfU_1\|_{L^{\frac{2p}{p-1}}}\|Dw\|_{L^{2p}}.
\end{align}
Using \eqref{Lp-interp}, \eqref{rtd},  and precise constant of embedding  as in the proof of Lemma 
\ref{luts}, one has
\begin{align}\label{saon}
|(2)| =
 |(\partial_i w,\partial_jw\partial_{i}\bfU^j_1)| \leq C\|w\|_{H^2}^{3-p}\|Dw\|_{L^{\frac{2p}{p-1}}}^{p-1}\|D\bfU_1\|_{L^{\frac{2p}{p-1}}}
\leq  \frac{C}{(p - 1)^\frac{p}{2}} \|w\|_{H^1}^{3-p} \left( \|\Trs_1\|_{H^2} + 
\|\Trs_2\|_{H^2}\right)^p \,.
\end{align}
On the other hand,
\begin{align*}
|(1)|&\leq |(\partial_i w,\partial_i\bfU^j\partial_j\Trs_1)| + |(\partial_i w,\bfU^j\partial_{ij}^2\Trs_1)|= |(3)| + |(4)|.
\end{align*}
As in \eqref{faon} and \eqref{saon} we obtain
\begin{equation}
|(3)| = |(\partial_i w,\partial_i\bfU^j\partial_j\Trs_1)| \leq  \frac{C}{(p - 1)^\frac{p}{2}} \|w\|_{H^1}^{3-p} \left( \|\Trs_1\|_{H^2} + \|\Trs_2\|_{H^2}\right)^p.
\end{equation}
 To estimate $(4)$,  the embedding $L^{q}\hookrightarrow H^1$ 
 (with precise constant \cite[Remark 1.5]{Chang2004}), \eqref{rtd} and  \eqref{Lp-interp}   imply
\begin{align}
|(4)| = |(\partial_i w,\bfU^j\partial_{ij}^2\Trs_1)| &\leq \|\Trs_1\|_{H^2}\|D w\|_{L^{2p}}\|\bfU\|_{L^\frac{2p}{p-1}}\leq 
 \|\Trs_1\|_{H^2}\|D w\|_{L^{2p}}\|w\|_{L^\frac{2p}{p-1}}
 \\
 &\leq 
C\|\Trs_1\|_{H^2} \|w\|_{H^1}^{2-p} \|Dw\|_{L^{\frac{2p}{p-1}}}^{p-1}\|w\|_{L^\frac{2p}{p-1}}  \leq
 \frac{C}{(p - 1)^\frac{p}{2}} \|w\|_{H^1}^{3-p} \left( \|\Trs_1\|_{H^2} + \|\Trs_2\|_{H^2}\right)^p \,,
\end{align}
where $C$ is independent of $p$. 
Combining all the estimates, and using that $p < 2$, we have  
\begin{align*}
\frac{d}{dt}\|w(t)\|_{H^1}^2\leq \frac{C}{(p - 1)^\frac{p}{2}}  \|w(t)\|_{H^1}^{2\delta}
(1+\|\Trs_1\|^2_{H^2} + \|\Trs_2\|^2_{H^2}),
\end{align*}
where $\delta=\frac{3-p}{2}$ and $p\in (1, 2)$. Thus, for any $T_0 < T_1$
\begin{align*}
\sup_{t\in [T_0,T_1]}\|w(t)\|_{H^1}^{2}\leq  \left(\|w(T_0)\|^{2(1-\delta)}_{H^1} + C(1 - \delta)^{\delta - \frac{1}{2}} \int_{T_0}^{T_1}(1+\|\Trs_1\|^2_{H^2}
+\|\Trs_2\|_{H^2}^2)ds\right)^{\frac{1}{1-\delta}}. 
\end{align*}
Passing $\|w(T_0)\|_1 \to 0$ implies for any $\delta \in (0 ,1)$
\begin{align*}
\lim_{\|w(T_0)\|\to 0}\sup_{t\in [T_0,T_1]}\|w(t)\|_1^{2}\leq  \left(C(1 - \delta)^{\delta - \frac{1}{2}} \int_{T_0}^{T_1}(1+\|\Trs_1\|^2_2
+\|\Trs_2\|_2^2)ds\right)^{\frac{1}{1-\delta}}. 
\end{align*}
 and finally letting $\delta  \to 1^+$, or equivalently $p \to 1^+$, we arrive at
\begin{align*}
\lim_{\|w(T_0)\|_1\to 0}\sup_{t\in [T_0,T_1]}\|w(t)\|_1=0,
\end{align*}
as desired.
\end{proof}

\begin{Prop}\label{p:mbm}
Under the assumption of Theorem \ref{TheoremPassagetotheLimit}, the 
relations  \eqref{est_L2_lim_meas} and \eqref{est_H1_lim_meas}.
\end{Prop}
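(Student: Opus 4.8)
The plan is to obtain both identities by passing to the limit $\alpha_k\to 0$ in the stationary relations \eqref{est_L2_mualpha}, \eqref{est_H1_mualpha} and \eqref{igi} proved for $\mu_{\alpha_k}$ in Theorem \ref{TheoremStationaryMeasures}. I would carry out the passage on the lifted, almost surely convergent variables: by Lemma \ref{l:sko} one has $\tilde{\Trs}_k\to\tilde{\Trs}$ almost surely in $\mathcal{Y}_\delta$, and by Lemma \ref{l:pjc} the law of $\tilde{\Trs}_k(t)$ is $\mu_{\alpha_k}$ and the law of $\tilde{\Trs}(t)$ is $\mu$ for every $t$. Writing $G(\Trs)=\int_{\TT}|\nabla\Trs|^2\nabla\Trs\cdot\nabla(-\Delta)^{-\frac12}\Trs\,dx$ and $F=\|\cdot\|_{H^{3/2}}^2-G$, the identity $\int_{L^2}F\,d\mu_{\alpha_k}=\E F(\tilde{\Trs}_k(t))$ (constant in $t$) together with averaging over $t\in(0,T)$ reduces \eqref{est_L2_lim_meas} to proving
\[
\frac1T\E\int_0^T F(\tilde{\Trs}_k)\,dt\longrightarrow\frac1T\E\int_0^T F(\tilde{\Trs})\,dt,
\]
because the left-hand side equals the constant $A_{-1/2}/2$ for every $k$ by \eqref{est_L2_mualpha}, while the right-hand side is $\int_{L^2}F\,d\mu$.

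The inequality \eqref{est_H1_lim_meas} is the soft part. The functional $\Trs\mapsto M^{q-1}(\Trs)(\|\Trs\|_{H^2}^2+\|\Trs\|_{W^{1,4}}^4)$ is nonnegative and lower semicontinuous on $H^{2-\gamma}$, being the product of the continuous factor $M^{q-1}$ (recall $H^{2-\gamma}\hookrightarrow L^2$) and the weakly lower semicontinuous convex factor $\|\cdot\|_{H^2}^2+\|\cdot\|_{W^{1,4}}^4$. Since $\mu_{\alpha_k}\to\mu$ weakly in $H^{2-\gamma}$ (Lemma \ref{l:pjc}), the lower semicontinuity of integrals of nonnegative lower semicontinuous functions under weak convergence of measures, combined with the uniform bound \eqref{igi}, immediately yields \eqref{est_H1_lim_meas}.

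For the equality \eqref{est_L2_lim_meas} I would treat the two pieces of $F$ separately. For the quadratic piece, $H^{2-\delta}\hookrightarrow H^{3/2}$ for $\delta\in(\frac13,\frac12)$, so the a.s.\ convergence in $L^2_tH^{2-\delta}_x$ improves to $L^2_tH^{3/2}_x$ and hence $\int_0^T\|\tilde{\Trs}_k\|_{H^{3/2}}^2\,dt\to\int_0^T\|\tilde{\Trs}\|_{H^{3/2}}^2\,dt$ almost surely; the interpolation $\|\cdot\|_{H^{3/2}}\le\|\cdot\|_{L^2}^{1/4}\|\cdot\|_{H^2}^{3/4}$ and \eqref{cjv} bound $\E\big(\int_0^T\|\tilde{\Trs}_k\|_{H^{3/2}}^2\,dt\big)^{1+\varepsilon}$ uniformly, so Vitali's theorem upgrades this to convergence of the expectations. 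For the cubic piece, $\nabla\Trs\in H^{1-\delta}\hookrightarrow L^4$ (for $\delta<\frac12$) makes $G$ continuous on $H^{2-\delta}$, whence $G(\tilde{\Trs}_k)\to G(\tilde{\Trs})$ in measure (and a.e.\ along a subsequence) on $(0,T)\times\tilde{\Omega}$, using that the a.s.\ $\mathcal{Y}_\delta$-convergence and \eqref{cjv} force convergence in $L^2((0,T)\times\tilde{\Omega},H^{2-\delta})$.

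The crux, and the step I expect to be the main obstacle, is then the passage $\E\int_0^T G(\tilde{\Trs}_k)\,dt\to\E\int_0^T G(\tilde{\Trs})\,dt$, i.e.\ the uniform integrability of $\{G(\tilde{\Trs}_k)\}$ on $(0,T)\times\tilde{\Omega}$. This is delicate precisely because the a priori control available, $L^2_tH^2_x$ and $L^4_tW^{1,4}_x$, is critical for the quartic structure $|G|\le C\|\nabla\Trs\|_{L^4}^3\|\Trs\|_{L^4}$, leaving no slack in any single norm. I would resolve it by a de la Vallée–Poussin type splitting according to $\{M\le N\}$ and $\{M>N\}$: on the high-energy region the weight $M^{q-1}$ in \eqref{igi} together with the uniform moments $\E M^q\le C$ (which follow from \eqref{igi} and $M\le C\|\Trs\|_{H^2}^2$) absorbs the tail, while on $\{M\le N\}$ a Chebyshev estimate in $\|\Trs\|_{W^{1,4}}$ against $\E\int_0^T\|\Trs\|_{W^{1,4}}^4\,dt\le C$ controls the remaining contribution. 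Granting this uniform integrability, Vitali's theorem gives convergence of the cubic expectation; combined with the convergence of the quadratic piece and the constancy $\int_{L^2}F\,d\mu_{\alpha_k}=A_{-1/2}/2$, this establishes \eqref{est_L2_lim_meas} and completes the proof.
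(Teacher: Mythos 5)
Your proposal is correct in outline but follows a genuinely different route from the paper. You pass to the limit on the lifted, Skorokhod-realized variables $\tilde{\Trs}_k\to\tilde{\Trs}$, average in time, and invoke Vitali's theorem, whereas the paper never touches the lifted variables here: it truncates the integrand with a cutoff $\psi_R(I(\Trs))I(\Trs)$, checks that the truncated functional is bounded and continuous on $H^{\frac{10}{7}}$, passes to the limit using only the weak convergence $\mu_k\to\mu$ on $H^{2-\gamma}$, controls the error on $\{\|\Trs\|\geq R\}$ uniformly in $k$ by H\"older against $\mu_k(B_R^c)^{1/6}\lesssim R^{-1/3}$, and finishes with dominated convergence in $R$. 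Both arguments rest on the same two quantitative inputs, the uniform moments \eqref{igi} and the Gagliardo--Nirenberg bound \eqref{fgnh}; the paper's version avoids any discussion of almost-sure convergence of the time-dependent process and is somewhat shorter. One substantive remark on your ``crux'': the criticality you fear in $|G(\Trs)|\leq C\|\nabla\Trs\|_{L^4}^3\|\Trs\|_{L^4}$ disappears once you apply Gagliardo--Nirenberg to the last factor, $\|\Trs\|_{L^4}\leq C\|\Trs\|^{2/3}\|\nabla\Trs\|_{L^4}^{1/3}$, which gives $|G(\Trs)|\leq C\|\nabla\Trs\|_{L^4}^{10/3}\|\Trs\|^{2/3}$ as in \eqref{fgnh}; this is strictly subcritical in $W^{1,4}$ and yields a uniform bound on $\E\int_0^T|G(\tilde{\Trs}_k)|^{6/5}\,dt$ directly from \eqref{igi} with $q=\frac{7}{5}$, so the de la Vall\'ee--Poussin splitting you sketch, while workable, is not needed. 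Your lower-semicontinuity argument for \eqref{est_H1_lim_meas} is essentially the paper's Portmanteau appeal spelled out.
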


\begin{proof}
Recall that by Lemma \ref{l:pjc}
$\mu_k \to \mu$ as measures on $H^{2-\gamma}$, $\gamma > 0$ and $\mu$ is 
supported on $H^2$. 

The inequality \eqref{est_H1_lim_meas} follows from \eqref{igi} and Portmanteau theorem, since $C$ in \eqref{igi} is independent of $\alpha$. 

To establish \eqref{est_L2_lim_meas}, frix $R \geq 1$ and  
let  $\psi_R : \RR \to [0, 1]$ be a $C^\infty$ cut off function with  $\psi_R(r) = 1$ for $|r| \leq R$ and $\psi_R(r) = 0$ for $|r| \geq R + 1$ . Denote $B_R$  the ball in $L^2$ centred at $0$ with radius $R$, and $B_R^c$ the complement of $B_R$ in $L^2$ and define
\begin{equation}\label{dith}
I(\Trs) = \|\Trs\|_{\frac{3}{2}}^2
-  \int_{\TT} | \nabla \Trs |^2 \nabla \Trs \cdot \nabla (-\Delta)^{-\frac{1}{2}} \Trs  dx 
 \,.
\end{equation}
Then, by \eqref{est_L2_mualpha} 
\begin{align*}
\frac{A_{\frac{-1}{2}}}{2} - \int_{B_R^c}  |I(\Trs)| \mu_k(d\Trs)\leq \int_{L^2}\psi_R(I(\Trs)) I(\Trs) \mu_k(d\Trs)\leq \frac{A_{\frac{-1}{2}}}{2}
+ \int_{B_R^c}  |I(\Trs)| \mu_k(d\Trs)\,.
\end{align*} 
Also, H\"older's inequality, interpolation,  \eqref{tomn2}, and \eqref{igi} imply
\begin{align*}
\int_{B_R^c}\|\Trs\|_{\frac{3}{2}}^2\mu_k(d\Trs)\leq 
\left(\int_{L^2}\|\Trs\|_\frac{3}{2}^\frac{8}{3}\mu_k(d\Trs)\right)^\frac{3}{4}(\mu_k(B_R^c))^\frac{1}{4}
\leq \left(\int_{L^2}\|\Trs\|^\frac{2}{3}\|\Trs\|_2^2 \mu_k(d\Trs)\right)^\frac{3}{4}(\mu_k(B_R^c))^\frac{1}{4}\leq \frac{C}{R^\frac{1}{2}}
\end{align*}
and by H\" older's and Gagliardo-Nirenberg inequalities
\begin{align}\label{fgnh}
\left|  \int_{\TT} | \nabla \Trs |^2 \nabla \Trs \cdot \nabla (-\Delta)^{-\frac{1}{2}} \Trs  dx  \right| &\leq  
\|| \nabla \Trs |^2 \nabla \Trs\|_{L^{\frac{4}{3}}} \|\nabla (-\Delta)^{-\frac{1}{2}}\Trs \|_{L^4} 
\leq C \|\nabla \Trs \|_{L^4}^3  \|\Trs \|_{L^4} \\
&\leq C \|\nabla \Trs \|_{L^4}^{\frac{10}{3}}  \|\Trs \|_{L^2}^{\frac{2}{3}} \,.
\end{align}
Hence, by the  \eqref{igi} and \eqref{tomn2}
\begin{align}
\int_{B_R^c}\left|  \int_{\TT} | \nabla \Trs |^2 \nabla \Trs \cdot \nabla (-\Delta)^{-\frac{1}{2}} \Trs  dx  \right| \mu_k(d\Trs)
&\leq C\int_{B_R^c}  \|\nabla \Trs \|_{L^4}^{\frac{10}{3}}  \|\Trs \|_{L^2}^{\frac{2}{3}}  \mu_k(d\Trs) \\
&\leq C \left(\int_{L^2}  \|\nabla \Trs \|_{L^4}^{4}  \|\Trs \|_{L^2}^{\frac{4}{5}}  \mu_k(d\Trs)\right)^{\frac{5}{6}} (\mu_k(B_R^c))^{\frac{1}{6}}
\leq \frac{C}{R^{\frac{1}{3}}} \,.
\end{align}
Thus,
\begin{align}\label{pmcv}
\frac{A_{-\frac{1}{2}}}{2} - C \left( \frac{1}{R^\frac{1}{2}} + \frac{1}{R^{\frac{1}{3}}}\right)
\leq \int_{L^2}\psi_R(I(\Trs)) I(\Trs) \mu_k(d\Trs)\leq \frac{A_{-\frac{1}{2}}}{2} + C \left( \frac{1}{R^\frac{1}{2}} + \frac{1}{R^{\frac{1}{3}}}\right).
\end{align} 
Furthermore, H\" older inequality and Sobolev embedding imply
\begin{align}
\left|  \int_{\TT} | \nabla \Trs |^2 \nabla \Trs \cdot \nabla (-\Delta)^{-\frac{1}{2}} \Trs  dx  \right| &\leq  
\|| \nabla \Trs |^2 \nabla \Trs\|_{L^{\frac{7}{6}}} \|\nabla (-\Delta)^{-\frac{1}{2}}\Trs \|_{L^7} 
\leq C \|\nabla \Trs \|_{L^{\frac{7}{2}}}^3  \|\Trs \|_{L^7} \\
&\leq C \| \Trs \|_{H^{\frac{10}{7}}}^3  \|\Trs \|_{L^7} \,.
\end{align}
Therefore, $I : H^{\frac{10}{7}} \to \RR$, and by passing $k\to\infty$ in \eqref{pmcv} and using of weak convergence $\mu_k\to\mu$ on $H^\frac{10	}{7}$ (see Lemma 
\ref{l:pjc}) and the boundedness of $\psi_R(I(\Trs)) I(\Trs)$
we obtain 
\begin{align}\label{alih}
\frac{A_{-\frac{1}{2}}}{2} - C \left( \frac{1}{R^\frac{2}{5}} + \frac{1}{R^{\frac{1}{6}}}\right)
\leq \int_{L^2}\psi_R(I(\Trs)) I(\Trs) \mu(d\Trs)\leq \frac{A_{-\frac{1}{2}}}{2} + C \left( \frac{1}{R^\frac{2}{5}} + \frac{1}{R^{\frac{1}{6}}}\right).
\end{align} 
Finally, by \eqref{fgnh} one has 
\begin{equation}\label{siei}
|\psi_R(I(\Trs)) I(\Trs)| \leq 
|I(\Trs)| \leq C ( \|\nabla \Trs \|_{L^4}^{\frac{10}{3}}  \|\Trs \|_{L^2}^{\frac{2}{3}} + \|\Trs\|_{\frac{3}{2}}^2)
\end{equation}
and by \eqref{est_H1_lim_meas} the right hand side is $\mu$ integrable. Since $\psi_R(I(\Trs)) I(\Trs) \to I(\Trs)$ everywhere as $R \to \infty$, 
by the dominated convergence theorem, and \eqref{alih} implies  
 \eqref{est_L2_lim_meas}.
\end{proof}

\section{Qualitative properties}\label{section_QualProperties}

In this section we complete the proof of the main result, Theorem \ref{thm:main}
by showing that parts 6 and 7 holds  true. 

In particular, 
we show that the the distributions via $\mu$ of the functionals below admit densities with respect to the Lebesgue measure on $\RR.$ 
\begin{align*}
E_{-\frac{1}{2}}(\Trs) &=\frac{1}{2}\|\Trs\|_{H^\frac{-1}{2}}^2,\\
M(\Trs) &=\frac{1}{2}\|\Trs\|^{2}.
\end{align*}  
Also, using other conservation laws of the SQG equation, we show the infinite-dimensional nature of the measure $\mu$. The proofs follow general framework
developed for analogous problems, however the adaptation is not straightforward. 
Since our smoothing operator is not Laplacian, but bi-Laplacian, we lost several 
important properties. For example,  unlike $\langle  -\Delta \Trs, f(\Trs)\rangle \geq 0$
for any increasing function $f$, it is not clear that 
 $\langle \Delta^2 \Trs, f(\Trs)\rangle$ is bounded from below for any $\Trs \in H^2$ and for sufficiently many functions $f$. This obstacle was solved by introducing
the $p$-Laplacian to the equation, that lead to stronger moment bounds, see Theorem \ref{TheoremPassagetotheLimit}. In such case, after nontrivial integration by parts we can show that  $\langle \Delta^2 \Trs, f(\Trs)\rangle$ is bounded from below if $f$ has bounded derivative up to fourth order. 
Also, compared to Euler equation we have to choose differently the set of functions $f$. 
 Let us provide details.

\begin{Thm}\label{theoremabsolutecontinuity}
Assume $A_0 < \infty$.
The laws of the functionals $M(\Trs)$ and $E_{-\frac{1}{2}}(\Trs)$ under $\mu$ are absolutely continuous with respect to the Lebesgue measure on $(0,\infty).$ 
\end{Thm}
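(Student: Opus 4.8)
The plan is to verify, for each conserved functional $\Phi\in\{M,E_{-\frac12}\}$, the classical criterion: a finite Borel measure on $\RR$ whose distributional derivative is again a finite measure is absolutely continuous with a density of bounded variation. I would establish this not for the push‑forward $\Phi_*\mu$ directly, but for the \emph{tilted} push‑forwards $\nu_\al:=\Phi_*(b_\Phi\,\mu_\al)$, where $b_\Phi(\Trs)\ge0$ is the diffusion coefficient generated by the noise, namely $b_M(\Trs)=\sum_j a_j^2\big(\int_\TT e_j\Trs\,dx\big)^2$ and $b_{E}(\Trs)=\sum_j \lambda_j^{-1}a_j^2\big(\int_\TT e_j\Trs\,dx\big)^2$, and then pass to the limit $\al\to0$. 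Going through $\mu_\al$ is forced on us because $\mu$ itself is only an inviscid limit and is not the stationary state of a non‑degenerate SPDE.

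The starting point is a stationary It\^o identity. Fix $g\in C_0^\infty(\RR)$, let $\Trs_\al$ be stationary with law $\mu_\al$, and apply the infinite‑dimensional It\^o formula (Theorem \ref{intro_KS_Ito_theorem}) to $t\mapsto g(\Phi(\Trs_\al(t)))$. Exactly as in the derivations of \eqref{cet} and \eqref{hmj}, the drift of $\Phi(\Trs_\al)$ equals $\al\,d_\Phi(\Trs_\al)$ and its quadratic variation equals $\al\,b_\Phi(\Trs_\al)\,dt$, where $d_M=-\|\Trs\|_{H^2}^2-\|\Trs\|_{W^{1,4}}^4+\tfrac12 A_0$ and $d_{E}=-\|\Trs\|_{H^{\frac32}}^2-\int_\TT|\nabla\Trs|^2\nabla\Trs\cdot\nabla(-\Delta)^{-\frac12}\Trs\,dx+\tfrac12 A_{-1}$. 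Since $\Trs_\al(t)$ is distributed as $\mu_\al$ for every $t$, the map $t\mapsto\E\,g(\Phi(\Trs_\al(t)))$ is constant, so the expected drift of $g(\Phi(\Trs_\al))$ vanishes; after cancelling $\al$ this yields
\begin{equation}
\E_{\mu_\al}\!\big[g'(\Phi)\,d_\Phi\big]+\tfrac12\,\E_{\mu_\al}\!\big[g''(\Phi)\,b_\Phi\big]=0 .
\end{equation}
The integrability needed to apply Theorem \ref{intro_KS_Ito_theorem} and to see that the martingale has zero mean follows from the moment bounds \eqref{igi} and $\|g'\|_{L^\infty},\|g''\|_{L^\infty}<\infty$.

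Rewriting this in terms of the finite measures $\nu_\al=\Phi_*(b_\Phi\mu_\al)$ and $\sigma_\al=\Phi_*(d_\Phi\mu_\al)$ gives $\int_\RR g''\,d\nu_\al=-2\int_\RR g'\,d\sigma_\al$ for all $g\in C_0^\infty(\RR)$, i.e.\ $\nu_\al''=2\,\sigma_\al'$ in the sense of distributions; since both measures are supported on $[0,\infty)$ and $\nu_\al$ carries no mass near $-\infty$, the spurious constant vanishes and $\nu_\al'=2\,\sigma_\al$. Here the decisive point is that $\|\sigma_\al\|_{TV}=\E_{\mu_\al}[|d_\Phi|]$ is bounded \emph{uniformly in} $\al$: the \emph{equalities} \eqref{est_L2_mualpha}--\eqref{est_H1_mualpha} give $\E_{\mu_\al}[\|\Trs\|_{H^2}^2+\|\Trs\|_{W^{1,4}}^4]=\tfrac12A_0$ and $\E_{\mu_\al}[\|\Trs\|_{H^{\frac32}}^2-\int_\TT|\nabla\Trs|^2\nabla\Trs\cdot\nabla(-\Delta)^{-\frac12}\Trs\,dx]=\tfrac12A_{-\frac12}$, while the sign‑indefinite term in $d_E$ is absorbed by $\|\Trs\|_{W^{1,4}}^4$ exactly as in the proof of Corollary \ref{cor:ubb}. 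Thus $\nu_\al$ has a density $q_\al$ of bounded variation, and since also its total mass $\nu_\al(\RR)=\E_{\mu_\al}[b_\Phi]\le C$ is uniformly bounded by \eqref{igi}, the family $(q_\al)$ is bounded in $L^\infty(\RR)$ uniformly in $\al$.

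Finally I would pass to the limit. As $\Phi$ and $b_\Phi$ are continuous on the spaces $H^{2-\gamma}$ on which $\mu_\al\to\mu$ (Lemma \ref{l:pjc}) and the relevant integrands are uniformly integrable by the moment bounds, $\nu_\al\to\nu:=\Phi_*(b_\Phi\mu)$ weakly, and the uniform $L^\infty$ bound on $q_\al$ forces $\nu$ to possess a bounded density; in particular $\nu\ll\mathrm{Leb}$. To transfer this to $\rho:=\Phi_*\mu$ on $(0,\infty)$ I use the non‑degeneracy of the noise, which guarantees $b_\Phi(\Trs)>0$ whenever $\Trs\neq0$: for any Borel $S\subset(0,\infty)$ of Lebesgue measure zero, $0=\nu(S)=\int_{\{\Phi\in S\}}b_\Phi\,d\mu$ forces $\mu(\{\Phi\in S\})=0$, i.e.\ $\rho(S)=0$, which is the claimed absolute continuity. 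I expect the genuine difficulties to be twofold: first, the second‑derivative term $\E_{\mu_\al}[g''(\Phi)b_\Phi]$ cannot be discarded, and the whole scheme rests on converting it, through the nonnegativity of $b_\Phi$ and the identity $\nu_\al'=2\sigma_\al$, into bounded‑variation control of the \emph{tilted} measure $\nu_\al$ rather than of $\rho$; second, absolute continuity is unstable under weak limits, so the estimates on $q_\al$ must be uniform in $\al$, which is precisely why the exact moment equalities \eqref{est_L2_mualpha}--\eqref{est_H1_mualpha}, not mere inequalities, are indispensable.
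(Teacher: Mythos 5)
Your proposal is correct and rests on the same two pillars as the paper's proof: the stationary It\^o identity balancing the drift $d_\Phi$ against the diffusion coefficient $b_\Phi$ of the pushed-forward process, and the non-degeneracy of $b_\Phi$ away from the origin (which, exactly as in the paper, tacitly requires $a_j\neq 0$ for all $j$). The differences are in the execution. First, where you derive $\nu_\al'=2\sigma_\al$ distributionally and invoke bounded-variation theory to get a uniformly bounded density for the tilted pushforward $\Phi_*(b_\Phi\mu_\al)$, the paper reaches the equivalent ``pilot relation'' $\E_{\mu_\al}[\chi_\Gamma(\Phi)\,b_\Phi]\le C\,\ell(\Gamma)$ by introducing the resolvent $\Phi_\delta$ of $\tfrac12\partial_x^2-\delta$ applied to $f$, applying It\^o to $\Phi_\delta(\Phi(\Trs))$ and sending $\delta\to0$; your $g'$ is precisely the limit of $\Phi_\delta'$, so the computations coincide, the resolvent being merely the paper's device for legitimizing the integration by parts on $\RR$. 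Second --- the more substantive divergence --- the paper converts the pilot relation into absolute continuity of the \emph{untilted} law $\Phi_*\mu_\al$ \emph{before} letting $\al\to0$: it truncates to the first $N$ Fourier modes, uses an inverse Poincar\'e inequality to bound $b_\Phi$ from below on the sets $\Gamma_\delta=\{\|\Trs\|_{H^{-1}}^2\ge\delta\}\cap\{\|\Trs\|^2\le 1/\delta\}$ (resp.\ $I_\epsilon$), exhausts $L^2\setminus\{0\}$ by these sets, and only then invokes Portmanteau. You instead pass the tilted measures to the limit first (lower semicontinuity on open sets plus regularity preserves the density bound) and divide by the strictly positive $b_\Phi$ on $\Phi^{-1}(S)\subset\{\Trs\neq 0\}$ at the very end; this eliminates the $\bar a_N$, $\lambda_N$, $\Gamma_\delta$ bookkeeping at the modest cost of checking uniform integrability of $b_\Phi$ under $(\mu_\al)$, which \eqref{igi} supplies. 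One cosmetic remark: the It\^o correction in $d_{E}$ is $\tfrac12\sum_j a_j^2\lambda_j^{-1/2}=\tfrac12 A_{-1/2}$ rather than $\tfrac12 A_{-1}$ (the paper itself oscillates between the two in \eqref{hmj} and \eqref{est_L2_mualpha}); the exact value is immaterial here, since the argument only needs this constant to be finite.
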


Analogous statement can be proved for other invariants, but it is more technically involved, and we decided to skip it for the clarity of presentation. 

To obtain the following result in a cleaner form, we redefine the measure $\mu$ constructed in Theorem 
\ref{TheoremPassagetotheLimit} so that it does not have an atom at the origin. 

By \eqref{est_L2_lim_meas} and $A_{-\frac{1}{2}} \neq 0$ 
one has $\mu(\{0\}) < 1$, and therefore $S := \mu(H^2 \setminus \{0\})>0$. Define the probability measure 
\begin{equation}\label{dtm}
\tilde{\mu}(A) =  \frac{\mu(A\setminus \{0\})}{S}  \,.
\end{equation}
The SQG equation preserves the $L^2$ norm of solutions, and therefore it 
preserves the set $\textrm{supp}(\mu) \setminus \{0\}$. The invariance of $\tilde{\mu}$ comes readily from the invariance  of $\mu$.

\begin{Thm}\label{theoremdimension}
If $A_0 < \infty$, then 
he measure $\tilde{\mu}$ is infinite-dimensional in the sense that  if 
$K\subset H^1$ is a compact set of finite Hausdorff dimension, then
$\mu(K)=0.$
\end{Thm}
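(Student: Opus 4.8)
The plan is to prove that $\tilde\mu$ (equivalently $\mu$ restricted away from the origin) charges no compact set of finite Hausdorff dimension by exploiting the infinitely many conservation laws of SQG together with a nondegeneracy argument in the spirit of the Euler case in \cite{GHSV, KS12}. The essential mechanism is that, although $\mu$ is invariant under the flow $\rho_t$, the flow preserves a large family of functionals (the conserved quantities of \eqref{sqg}), and one can produce enough ``independent directions'' of the dynamics to rule out concentration on low-dimensional sets. Concretely, I would first recall the conserved quantities: all integrals of the form $\int_{\TT} G(\Trs)\,dx$ for smooth $G$ (the transport structure of \eqref{sqg} conserves the distribution function of $\Trs$), so in particular $M(\Trs)=\tfrac12\|\Trs\|^2$ and the whole family $m_k(\Trs)=\int_{\TT}\Trs^k\,dx$ are invariant. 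These generate, via the stationary equation \eqref{sqgs} and an It\^o/Krylov computation at the level of $\mu_\alpha$, quantitative lower bounds on how $\mu_\alpha$ (and hence $\mu$) spreads.

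\textbf{Key steps.} First I would argue by contradiction: suppose $\tilde\mu(K)>0$ for some compact $K\subset H^1$ with $\dim_H K = d<\infty$. Cover $K$ by balls and use that a finite-dimensional compact set can be parametrized, up to small error, by finitely many coordinates; the goal is to show that under the dynamics the measure must have a component absolutely continuous in infinitely many independent directions, contradicting finite dimensionality. The technical engine is a Krylov-type estimate (as announced in the introduction and Section \ref{section_QualProperties}) giving, for $\mu_\alpha$, bounds of the form
\begin{equation}
\int_{L^2} \langle \Delta^2\Trs - \Delta_4\Trs,\, f(\Trs)\rangle\,\mu_\alpha(d\Trs)
\end{equation}
being controlled uniformly in $\alpha$ for every $f$ with four bounded derivatives; this is precisely where the $p$-Laplacian term was introduced so that $\langle \Delta^2\Trs-\Delta_4\Trs, f(\Trs)\rangle$ is bounded below. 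Passing $\alpha\to 0$ and combining with the moment bounds \eqref{est_H1_lim_meas}, I would extract a family of a priori estimates on $\mu$ that survive the inviscid limit. Then I would test against functionals $f$ localized to detect the spread of $\mu$ in the directions $e_j$ associated with high eigenvalues $\lambda_j$: the conservation laws provide countably many functionally independent invariants $\{m_k\}$, and the nondegeneracy estimate shows each contributes a genuinely ``new'' direction that cannot collapse onto a finite-dimensional set. The contradiction with $\dim_H K<\infty$ then follows by a dimension-counting argument comparing the number of independent directions the measure must fill against the Hausdorff content of $K$.

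\textbf{Main obstacle.} The hard part will be the nondegeneracy step: making rigorous that the conserved quantities force infinitely many independent directions of support, given that we do \emph{not} have global well-posedness of \eqref{sqg} and only barely have uniqueness in $C_tH^1\cap L^2_tH^2$. Unlike Euler, the smoothing operator is bi-Laplacian, so the sign condition $\langle -\Delta\Trs, f(\Trs)\rangle\ge 0$ for monotone $f$ is lost, and one must work with $\langle \Delta^2\Trs - \Delta_4\Trs, f(\Trs)\rangle$ and accept that $f$ is restricted to those with bounded derivatives up to order four. Ensuring this restricted class of test functions is still rich enough to resolve infinitely many directions, while keeping all estimates uniform in $\alpha$ so they pass to the limit $\mu$, is the crux. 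I expect the argument to proceed by applying the Krylov lemma to $\mu_\alpha$ with carefully chosen $f = f_R\circ \langle \cdot, e_j\rangle$ (smooth, compactly supported cutoffs), deriving an $\alpha$-uniform bound on the one-dimensional marginals of $\mu$ in each coordinate $\langle \Trs, e_j\rangle$ showing they are nonatomic and spread, and then invoking a Fubini/marginal argument (as for the absolute continuity in Theorem \ref{theoremabsolutecontinuity}) to conclude that $\mu$ cannot be carried by any finite-dimensional Lipschitz image, hence $\mu(K)=0$.
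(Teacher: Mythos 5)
Your high-level strategy (conservation laws of \eqref{sqg} plus a Krylov-type nondegeneracy estimate for $\mu_\alpha$, uniform in $\alpha$, followed by Portmanteau) is the same as the paper's, and you correctly identify why the $p$-Laplacian and the restriction to test functions with four bounded derivatives enter. However, there are two genuine gaps. First, the functionals you propose to test against, $f_R\circ\langle\cdot,e_j\rangle$, are \emph{not} conserved by the SQG flow, and conservation is not a convenience here but the engine of the whole argument: for a conserved functional $F(\Trs)=\frac{1}{|\TT|}\int_{\TT} f(\Trs)\,dx$ the transport term drops out of the It\^o formula, so the drift of the process $t\mapsto F(\Trs_\alpha(t))$ is $O(\alpha)$ while its quadratic variation is also $O(\alpha)$; these two factors of $\alpha$ cancel in Krylov's estimate \cite[Theorem 7.9.1]{KS12}, producing a bound on $\E_{\mu_\alpha}\bigl[(\det M)^{1/n}\chi_{\mathcal B}(V_n)\bigr]$ that is uniform in $\alpha$. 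For a non-conserved functional such as $\langle\Trs,e_j\rangle$ the drift is $O(1)$, the resulting constant is $O(\alpha^{-1})$ after dividing out the $O(\alpha)$ diffusion, and the estimate degenerates in the inviscid limit, so no information survives to $\mu$. Relatedly, the unbounded moments $m_k(\Trs)=\int_{\TT}\Trs^k\,dx$ you mention must be replaced by compactly supported truncations $f_k$ (equal to $z^{k+1}$ near the origin) so that $f_k^{(p)}$, $p\le 4$, are uniformly bounded; otherwise neither the lower bound on $\langle\Delta^2\Trs-\nabla(|\nabla\Trs|^2\nabla\Trs),f_k'(\Trs)\rangle$ nor the uniform drift bound \eqref{bxu} holds.

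Second, your concluding ``dimension-counting / Fubini'' step is not yet an argument. The mechanism that actually closes the proof is concrete: fix $n>\dim_H K$ and $n$ conserved functionals $F_1,\dots,F_n$ whose derivatives $f_1',\dots,f_n'$ are linearly independent modulo constants when composed with any nonzero zero-mean continuous function; this is the nondegeneracy needed to get $\det M>0$ away from $\Trs\equiv 0$, and it has to be verified for the chosen family, not merely asserted from ``functional independence'' of the invariants. The Krylov estimate then shows that the push-forward of $\tilde\mu$ under $V_n=(F_1,\dots,F_n)$ is absolutely continuous on $\RR^n\setminus\{0\}$. Since each $F_k$ has bounded second derivative on $L^2$, the map $V_n$ is locally Lipschitz, hence $V_n(K)$ has Hausdorff dimension at most $\dim_H K<n$ and therefore Lebesgue measure zero in $\RR^n$; absolute continuity then gives $\tilde\mu(K)\le\tilde\mu\bigl(V_n^{-1}(V_n(K))\bigr)=0$. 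Without this Lipschitz push-forward step, and without handling the possible atom of $\mu$ at the origin (which is why $f_1$ is taken nonnegative and strictly positive in a punctured neighbourhood of zero, and why $\tilde\mu$ is introduced in \eqref{dtm}), your contradiction scheme does not close.
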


\begin{proof}[Proof of Theorem $\ref{theoremabsolutecontinuity}$]

Let $F(\Trs)$ be either $M(\Trs)$ or $E_{\frac{-1}{2}}(\Trs)$.
Thanks to the Portmanteau theorem, it suffices to prove the theorem for 
measures $\mu_\alpha$ with bounds that are uniform in $\alpha.$
Also, according to the non-negativity of $F$, our analysis shall be reduced to the interval $[0,\infty).$

\noindent
\textbf{Step $1:$ The pilot relation.} Fix any $f  \in C^\infty_0$ and define
\begin{align*}
\Phi_\delta(x)=\frac{1}{\sqrt{2\delta}}\int_{-\infty}^\infty f(y)e^{-|x-y|\sqrt{2\delta}}dy=\frac{1}{\sqrt{2\delta}}\left(\int_{-\infty}^xf(y)e^{-(x-y)\sqrt{2\delta}}dy+\int_x^{\infty}f(y)e^{(x-y)\sqrt{2\delta}}dy\right).
\end{align*}
and then
\begin{align*}
\Phi_\delta'(x)=\int_x^{\infty}f(y)e^{(x-y)\sqrt{2\delta}}dy-\int_{-\infty}^xf(y)e^{-(x-y)\sqrt{2\delta}}dy.
\end{align*}
Computing the second derivative of $\Phi_\delta$, we obtain that
\begin{align}
\frac{1}{2}\Phi_\delta''+f=\delta \Phi_\delta.\label{equ_Phi_lamda}
\end{align}
Since $\Phi_\delta$ is bounded uniformly in $\delta$ (as $f$ is compactly supported),
for every $x$, $\delta \Phi_\delta(x) \to 0$ as $\delta \to 0$ and 
\begin{align}
\Phi'_\delta(x) &\to\int_x^{\infty}f(y)dy -\int_{-\infty}^xf(y)dy \quad\text{as $\delta\to 0$},\label{CONV_phi_prim}\\
\Phi''_\delta(x) &\to -2f(x)\quad\text{as $\delta\to 0$}.\label{CONV_phi_sec}
\end{align}
Assume that $\Trs$ is a solution of \eqref{sqgs} with $\Trs(0)$ distributed as $\mu_\al$, and therefore 
$\Trs(t)$ is distributed as $\mu_\al$ for any $t \geq 0$. Note that by Theorem \ref{TheoremGWP}, $\Trs$ is a global solution and to simply the notation, we 
will not indicate explicitly the dependence of $\Trs$ on $\alpha$.
Denote $\E_{\mu_\alpha}$ to integral with respect to the measure 
$\mu_\alpha(d\Trs)$.

Let us apply the It\^o formula to $\Phi_\delta(F(\Trs))$, take the expectation, and use $\nabla_\Trs F(\Trs; \bfU\cdot\nabla\Trs)=0$
and stationarity of $\Trs(t)$ to obtain 
\begin{multline}\label{mlf}
\E_{\mu_\alpha}\left(\Phi_\delta'(F(\Trs))\left[\nabla_\Trs F(\Trs;\Delta^2 \Trs - \nabla(|\nabla \Trs|^2 \nabla \Trs)) 
+\frac{1}{2}\sum_{m} a_m^2\nabla_\Trs^2F(\Trs;e_m,e_m)\right]\right)
\\
+\frac{1}{2}\E_{\mu_\alpha}\left(\Phi''_\delta(\Trs)\sum_{m} a_m^2(\nabla_\Trs F(\Trs,e_m))^2\right)=0.
\end{multline}
If $F(\Trs) = M(\Trs)$, then 
\begin{equation}
\nabla_\Trs F(\Trs;\Delta^2 \Trs - \nabla(|\nabla \Trs|^2 \nabla \Trs))  = 
\|\Trs\|_{H^2}^2 + \|\Trs\|_{W^{1, 4}}^4 
\end{equation}
and if $F(\Trs) = E_{\frac{-1}{2}}(\Trs)$, then 
\begin{equation}
\nabla_\Trs F(\Trs;\Delta^2 \Trs - \nabla(|\nabla \Trs|^2 \nabla \Trs))  = I(\Trs)
\end{equation} 
where $I$ is defined by \eqref{dith}.

By \eqref{alp}, $\mu_\alpha$ is supported on $H^2$. Since $f\in C^\infty_0$, 
$|\Phi'_\delta|, |\Phi''_\delta|$ are bounded independently of $\delta$, 
 we can use the Lebesgue dominated convergence theorem to pass $\delta \to 0$
 in \eqref{mlf} and obtain for $F = M$
\begin{multline}\label{amf}
\E_{\mu_\alpha}\left(\left[\int_{F(\Trs)}^\infty f(y)dy-\int_{-\infty}^{F(\Trs)}f(y)dy\right]\left[\|\Trs\|_{H^2}^2 + \|\Trs\|_{W^{1, 4}}^4  
+\frac{1}{2}\sum_{m} a_m^2\nabla_\Trs^2F(\Trs;e_m,e_m)\right]\right)\\
-\E_{\mu_\alpha}\left(f(F(\Trs))\sum_{m} a_m^2|\nabla_\Trs F(\Trs,e_m)|^2\right)=0.
\end{multline}
and for $F = E_{-\frac{1}{2}}$ we just replace $\|\Trs\|_{H^2}^2 + \|\Trs\|_{W^{1, 4}}^4$ by $I(\Trs)$.

By a standard approximation argument combined with the Lebesgue dominated convergence theorem, we can
extend \eqref{amf} to $f=\chi_\Gamma$ being the characteristic function 
of a Borel set $\Gamma \subset \RR$. Then, $F \geq 0$ and 
\eqref{igi} imply that 
there is $C$ independent of $\alpha$  and $\Gamma$ such that 
\begin{align}
\E_{\mu_\alpha}&\left(\left[\int_{F(\Trs)}^\infty \chi_\Gamma(y)dy -\int_{-\infty}^{F(\Trs)}\chi_\Gamma(y)dy\right]\left[\|\Trs\|_{H^2}^2 + \|\Trs\|_{W^{1, 4}}^4  
+\frac{1}{2}\sum_{m} a_m^2\nabla_\Trs^2F(\Trs;e_m,e_m)\right]\right)\nonumber\\
&\leq
\left(\int_{0}^\infty \chi_\Gamma(y)dy \right)
\E_{\mu_\alpha} \left[\|\Trs\|_{H^2}^2 + \|\Trs\|_{W^{1, 4}}^4  
+\frac{1}{2}\sum_{m} a_m^2 \right]\nonumber\\
&\leq C\int_{-\infty}^\infty\chi_{\Gamma}(y)dy\leq C\ell(\Gamma) \,,
\end{align}
where $\ell(\Gamma)$ denotes the Lebesgue measure of $\Gamma$. If 
$F = E_{-\frac{1}{2}}$, one obtains a similar bound by using \eqref{siei}
and \eqref{igi}.

By \eqref{amf},
\begin{equation}\label{absolute_continuity:pilot_relation}
\E_{\mu_\alpha}\left(\chi_\Gamma(F(\Trs))\sum_{m} a_m^2|\nabla_\Trs F(\Trs,e_m)|^2\right) 
\leq C \ell(\Gamma).
\end{equation}
In the remaining part of the proof, we estimate the left hand side from below.

\noindent
\textbf{Step $2$: Absolute continuity on $(0, \infty).$}
Recall that $F$ is either $E_{\frac{-1}{2}}(\Trs)$ or $M(\Trs)$, then $\nabla_\Trs F(\Trs; e_m)$ is respectively
\begin{align*}
\lambda_m^{-1}(\Trs,e_m)\ \ \ \textrm{or} \ \ \ \ (\Trs,e_m).
\end{align*}
First,  focus on  $F (\Trs) = E_{\frac{-1}{2}}(\Trs)$.
 For any $\delta > 0$ denote 
 $\Gamma_\delta = \{\|\Trs\|_{-1}^2\geq \delta\} \cap \{\|\Trs\|^2\leq 1/\delta\}$.  
 
Let  $\bar{a}_N = \min \{| a_m|, \ 1\leq|m|\leq N\}$ and recall that $0 < \lambda_1 \leq \lambda_2 \leq \cdots \leq \lambda_N \leq \cdots$
are eigenvalues of $(-\Delta)$. 
Let $P_N$ be the projection on the space spanned by the first $N$ eigenfunctions of the Laplacian. Then, by the inverse Poincar\' e inequality,
for any $\Trs \in \Gamma_\delta$ one has
\begin{align*}
\sum_{m} a_m^2\lambda_m^{-2}(\Trs,e_m)^2 &=\sum_{|m|\leq N} a_m^2\lambda_m^{-2}(\Trs,e_m)^2+\sum_{|m|\pg N} a_m^2\lambda_m^{-2}(\Trs,e_m)^2\\
&\geq \bar{a}_N^2\lambda_N^{-1}\|P_N\Trs\|_{-1}^2 \geq \bar{a}_N^2\lambda_N^{-1}(\|\Trs\|_{-1}^2-\lambda_N^{-1}\|(I - P_N)\Trs\|^2)\\
&\geq \bar{a}_N^2\lambda_N^{-1} \left(\delta-\frac{1}{\delta \lambda_N}\right) \,.
\end{align*}
Fix any $\bar{\delta} > 0$ and any Borel set  $\Gamma^* \subset (\bar{\delta}, \infty)$.  For any $\delta \in (0, \bar{\delta})$, using \eqref{amf}
 the left hand side of \eqref{absolute_continuity:pilot_relation} can be estimated as
\begin{equation}
\E_{\mu_\alpha}\left(\chi_{\Gamma^*}(F(\Trs))\sum_{m} a_m^2|\nabla_\Trs F(\Trs,e_m)|^2\right)\geq  \bar{a}_N^2 \lambda_N^{-1} \left(\delta-\frac{1}{\delta \lambda_N}\right) \mu_\alpha(F^{-1}(\Gamma^*) \cap \Gamma_\delta).
\end{equation}
Since the sequence $\lambda_N$ increases to infinity, we can find $N$ such that 
$\delta-\frac{1}{\delta \lambda_N}\pg 0$, and therefore by \eqref{absolute_continuity:pilot_relation}
\begin{equation}
\mu_\alpha(F^{-1}(\Gamma^*) \cap \Gamma_\delta) 
\leq \frac{C\lambda_N}{\bar{a}_N^2\left(\delta-\frac{1}{\delta \lambda_N}\right) }\ell(\Gamma^*).
\end{equation}
Using the Portmanteau theorem, we pass to the limit $\alpha\to 0$ and  obtain
\begin{equation}
\mu(F^{-1}(\Gamma^*) \cap \Gamma_\delta) \leq \frac{C \lambda_N}{\bar{a}_N^2\left(\delta-\frac{1}{\delta \lambda_N}\right) }\ell(\Gamma^*).
\end{equation}
Consequently, if $\ell(\Gamma^*) = 0$, then for any $\delta \in (0, \bar{\delta})$
\begin{equation}
\mu(F^{-1}(\Gamma^*) \cap \Gamma_\delta)  = 0.
\end{equation}
Since $\bigcup_{\delta \in (0, \bar{\delta})} \Gamma_\delta = L^2 \setminus \{0\}$
and $0 \not \in F^{-1}(\Gamma^*)$, one obtains
\begin{equation}
\mu_\alpha(F^{-1}(\Gamma^*)) = \mu_\alpha(F^{-1}(\Gamma^*)  \setminus \{0\})) = 0 
\end{equation}
and  the claimed continuity follows.

Next, we focus on $F(\Trs) = M(\Trs)$. 
By the Cauchy-Schwartz inequality,
\begin{align*}
\|\Trs\|^{2}&=(\Trs,\Trs) =\sum_{m} (\Trs, e_m)^2 = \sum_{|m|\leq N} (\Trs, e_m)^2 + \sum_{|m|> N} (\Trs, e_m)^2 \\
&\leq\frac{\|\Trs\|}{\bar{a}_N}\left(\sum_{|m|\leq N} a_m^2 (\Trs, e_m)^2\right)^{\frac{1}{2}}+\|\Trs\|\left(\sum_{|m|> N}(\Trs, e_m)^2\right)^{\frac{1}{2}}.
\end{align*}
Also, 
\begin{align*}
\sum_{|m|> N}(\Trs, e_m)^2\leq \frac{1}{\lambda_N^2}\|\Trs\|_1^2 \,,
\end{align*}
and therefore
\begin{align}\label{est_gradient_Fp}
\overline{a}_N \left(\|\Trs\|- \frac{1}{\lambda_N}\|\Trs\|_1 \right)\leq \left(\sum_{m} a_m^2 (\Trs,e_m)^2\right)^\frac{1}{2}.
\end{align}
For any $\epsilon>0$ denote $I_\epsilon=\{\|\Trs\| \geq \epsilon, \ \ \|\Trs\|_1\leq\frac{1}{\epsilon}\}$. Then, for any $\Trs \in I_\varepsilon$
\begin{align}
\overline{a}_N \left(\epsilon - \frac{1}{\bar{\lambda}_N \varepsilon} \right)\leq \left(\sum_{m} a_m^2(\Trs, e_m)^2\right)^\frac{1}{2}.\label{est_gradient_Fp_epsilon}
\end{align}
Fix $\bar{\varepsilon}>0$ and a Borel set $\Gamma  \subset [\bar{\varepsilon}, \infty)$. 
Since $\Trs$ is distributed as $\mu_\al$, and $M(\Trs) \in \Gamma$ implies 
$\|\Trs\| \geq \bar{\varepsilon}$, then for any $\varepsilon \in (0, \bar{\varepsilon})$ 
\begin{align*}
\mu_\al(M^{-1}(\Gamma))=  \mu_\alpha(\{M(\Trs)\in \Gamma\}\cap \{\Trs\in I_\varepsilon\}) + \mu_\alpha \left(\{M(\Trs)\in \Gamma\} \cap \left \{\|\Trs\|_1\geq \frac{1}{\varepsilon}\right\}\right)=I+II.
\end{align*}  
Using the Chebyshev inequality and  \eqref{igi}, we obtain
$$
II\leq C\frac{A_0}{2}\varepsilon^2.
$$
Since $\lambda_N\to\infty$ as $N \to \infty$, we can suppose that $(\varepsilon-\frac{1}{\varepsilon \lambda_N})>0$. By  \eqref{absolute_continuity:pilot_relation} and \eqref{est_gradient_Fp_epsilon},
$$
I\leq \frac{C}{\bar{a}_N (\varepsilon-\frac{1}{\lambda_N \varepsilon})}\ell (\Gamma) \,.
$$
Consequently Portmanteau theorem yields
\begin{align*}
\mu(M^{-1}(\Gamma))\leq\frac{C}{\bar{a}_N (\varepsilon-\frac{1}{\varepsilon \lambda_N})}\ell (\Gamma)+ \frac{A_0}{2}\varepsilon^2
\end{align*}
and the rest of the proof follows as in the previous case. 
\end{proof}

Next, let us turn to the proof of Theorem \ref{theoremdimension}. 

\begin{proof}[Proof of Theorem $\ref{theoremdimension}$]
For any positive integer $k$ denote 
\begin{align*}
F_k(\Trs)= \frac{1}{|\TT|}\int_{\TT}f_k(\Trs(x))dx,
\end{align*}
where $f_k$ is a smooth function on $\RR$. Then, by $\nabla\cdot\bfU=0,$ we have for any $\Trs \in H^2$
\begin{equation}\label{cffs}
(f_k'(\Trs),\bfU\cdot\nabla\Trs)=0.
\end{equation}
Therefore, the functionals $F_k$ are conservation laws for \eqref{sqg}.
Fix $n$ and functions $(f_k)_{k= 1}^n$ on $\RR$ such that 
\begin{itemize}
\item[(i)] There is a constant $C$ independent of $k$ such that $|f^{(p)}(z)| \leq C$ for $p \in \{0, \cdots, 4\}$, that is, the sequence $(f_k)$ has uniformly (in $k$) bounded derivatives up to fourth order. Note that the bound can depend on $n$.
\item[(ii)] $f_k(0) = 0$ for each $k$. 
\item[(iii)] $f_1 \geq 0$ and $f_1 > 0$ on $(-\delta^*, \delta^*) \setminus\{0\}$ for some $\delta^* > 0$. 
\item[(iv)] If for some 
$v \in \RR^n$ and some continuous function $m : \TT \to \RR$ with zero mean 
one has 
\begin{align}\label{ipf}
\sum_{i=1}^n v_if_i'(m(x)) = \textrm{Const},\ \ \textrm{for all } x\in\TT\,, \textrm{ then } 
 v_i = 0 \quad \textrm{for each $i$,} \quad  \textrm{ or } \quad m \equiv 0.
\end{align}
\end{itemize}
For any $n \geq 1$,  such $(f_k)_{k = 1}^n$ indeed exists. For example let 
$f_k$ be smooth functions, compactly supported on $[-2, 2]$ and $f_k(z) = z^{k+1}$ on $(-1, 1)$. In addition, we assume that $f_1 \geq 0$. Clearly (i)--(iii) holds and it 
remains to verify (iv). Fix any zero mean continuous function $m \not \equiv 0$. Then the image of $m$ contains the interval $(-\delta, \delta)$ for some $\delta > 0$ and 
consequently 
\begin{equation}
\sum_{i=1}^n c_if_i'(z) = \textrm{Const} \qquad \textrm{ for all} \quad z \in (-\delta, \delta) \,.
\end{equation}
Since $f_k'$ are non-constant polynomials on $(-1, 1)$ one obtains that $v_i = 0$
for each $i$ as desired.

Since the second order derivatives of $f_k$ are bounded, then for any 
solution  $\Trs$  of \eqref{sqg} we can use Theorem \ref{intro_KS_Ito_theorem} and \eqref{cffs} to obtain
\begin{align}\label{ifg}
\E F_k(\Trs(t))+\alpha\E\int_0^t(f_k'(\Trs),\Delta^2\Trs - \nabla (|\nabla \Trs|^2 \nabla \Trs))ds=\E F_k(\Trs_0)+\frac{\al}{2}\sum_{m=1}^\infty a_m^2\E\int_0^t(f_k''(\Trs;e_m,e_m))ds.
\end{align}
Next, with a use of summation convention
\begin{align*}
(f_k'(\Trs),\partial^2_{ii}\partial^2_{jj}\Trs)=
-(f_k''(\Trs),  \partial_{i}\Trs \partial_i\partial^2_{jj}\Trs)
=
(f_k''(\Trs),\partial^2_{ii}\Trs\partial^2_{jj}\Trs)+(f_k^{(3)}(\Trs),(\partial_{i}\Trs)^2\partial^2_{jj}\Trs)=:(1)+(2).
\end{align*}
On the other hand,
\begin{align*}
(f_k'(\Trs),\partial^2_{ii}\partial^2_{jj}\Trs)=
-(f_k''(\Trs),  \partial_{i}\Trs \partial_i\partial^2_{jj}\Trs)
=
(f_k''(\Trs),(\partial_{ij}^2\Trs)^2)+(f_k^{(3)}(\Trs),\partial_i\Trs\partial_j\Trs\partial^2_{ij}\Trs)=: (3)+(4).
\end{align*}
Furthermore, 
\begin{align*}
(2)=-(f_k^{(4)}(\Trs),(\partial_i\Trs)^2(\partial_j\Trs)^2) - 2(f_k^{(3)}(\Trs),\partial_i\Trs\partial_j\Trs\partial^2_{ij}\Trs).
\end{align*}
We obtain
\begin{align*}
(f_k'(\Trs),\partial^2_{ii}\partial^2_{jj}\Trs)=
\frac{1}{3}[(1) + (2)] + \frac{2}{3}[(3) + (4)] = 
\frac{1}{3}\left((f_k''(\Trs), \partial_{ii}^2\Trs\partial^2_{jj}\Trs+2(\partial_{ij}^2\Trs)^2)-(f_k^{(4)}(\Trs),(\partial_i\Trs)^2(\partial_j\Trs)^2)\right) \,,
\end{align*}
and consequently
\begin{align}\label{doak}
(f_k'(\Trs),\Delta^2\Trs -  \nabla (|\nabla \Trs|^2 \nabla \Trs))=\frac{1}{3}\left((f_k''(\Trs),(\Delta\Trs)^2 + 2(D^2\Trs)^2) + (f_k''(\Trs)-f_k^{(4)}(\Trs),|\nabla\Trs|^4)\right)=: A_k(\Trs).
\end{align}
For any positive integer $n$, denote
\begin{align}
V_n(\Trs)=
\left(
\begin{array}{c}
F_1(\Trs)\\
F_2(\Trs)\\
\vdots\\
F_n(\Trs)
\end{array}
\right).\label{Krylov_vector}
\end{align}
By the It\^o formula and \eqref{cffs}, 
\begin{align}
V_n(\Trs)=V_n(\Trs_0)+\int_0^tx_sds+\sqrt{\al}\sum_m\int_0^ty_{m}(s)d W_m(s),\label{Krylov_vector_Ito_form}
\end{align}
and with $A_k$ defined in \eqref{doak} one has
\begin{align*}
x_s &=-\alpha\left(\begin{array}{c}
A_1(\Trs)\\
A_2(\Trs)\\
\vdots\\
A_n(\Trs)
\end{array}
\right)+\frac{\alpha}{2}\sum_m a_m^2
\left(\begin{array}{c}
\nabla_\Trs^2F_1(\Trs;e_m,e_m)\\
\nabla_\Trs^2F_2(\Trs;e_m,e_m)\\
\vdots\\
\nabla_\Trs^2F_n(\Trs;e_m,e_m)
\end{array}
\right)\\
&=-\alpha\left(\begin{array}{c}
A_1(\Trs)\\
A_2(\Trs)\\
\vdots\\
A_n(\Trs)
\end{array}
\right)+\frac{\alpha}{2}\sum_m a_m^2
\left(\begin{array}{c}
(f_1''(\Trs);e_m,e_m)\\
(f_2''(\Trs);e_m,e_m)\\
\vdots\\
(f_n''(\Trs);e_m,e_m)
\end{array}
\right)=:-\al A(\Trs)+\frac{\al}{2}B(\Trs),\\
y_{m} &= a_m
\left(\begin{array}{c}
(f_1'(\Trs),e_m)\\
(f_2'(\Trs),e_m)\\
\vdots\\
(f_n'(\Trs),e_m)
\end{array}
\right).
\end{align*}
Let $y_{m}^i = a_m (f'_i(\Trs), e_m)$ be the $i$th component of $y_m$. Denote by $M$ the $n \times n$ matrix with entries
\begin{align*}
M_{i,j}=\sum_{m}y_{m}^iy_{m}^j=  \sum_m a_m^2(f_i'(\Trs),e_m)(f_j'(\Trs),e_m)
\end{align*}
and note that $M$ depends on $t$, but is independent of $x$. 

Since $f'_k$, $f''_k$, and $f^{(4)}_k$ are bounded, by \eqref{doak} and \eqref{igi} one has  for any $\alpha \in (0, 1)$
\begin{equation}\label{bxu}
\E \int_0^t |x_s| + \alpha \sum_{m = 1}^\infty |y_m(s)|^2 ds \leq 
C \alpha \E \sum_{k = 1}^n\int_0^t  \|\Trs\|_{H^2}^2 + \|\nabla \Trs\|^4 + 1 +  
 \sum_{m = 1}^\infty  a_m^2 
ds \leq C \,,
\end{equation}
where $C$ is independent of $\alpha$. 
Then, \cite[Theorem 7.9.1]{KS12} and \eqref{bxu} (bound on $|x_s|$) imply
for any bounded measurable function $g$ the 
Krylov's estimate
\begin{equation}
\E_{\mu_\alpha}\int_0^1(\det M)^{1/n}g(V_n)dt\leq C_n \|g\|_{L^n}.
\end{equation} 
Let $\mathcal{B} \subset \RR^n$ be a Borel set and denote  $g=\chi_{\mathcal{B}}$
the indicator function of $\mathcal{B}$. 
Then, since $\mu_\alpha$ is an invariant measure
\begin{equation}\label{kess}
\int (\det M(\bar{\Trs}))^{1/n}\chi_{\mathcal{B}}(V_n(\bar{\Trs})) d \mu_\alpha(\bar{\Trs}) = 
\E_{\mu_\alpha}\int_0^1(\det M(\Trs(s)))^{1/n}\chi_{\mathcal{B}}(V_n(\Trs(s)))ds \leq C_nA_0(\ell_n(\mathcal{B}))^{\frac{1}{n}}.
\end{equation}
For any integer $k > 0$ denote $\mathcal{B}_k = \mathcal{B} \cap B_k^c$, where
$B_k \subset \RR^n$ is a ball of radius $\frac{1}{k}$ centred at the origin, and $B_k^c$ is 
its complement.  Note that 
$\mathcal{B} = \{0\} \cup \bigcup_k \mathcal{B}_k$ and by \eqref{kess} for any $k > 0$
  \begin{equation}\label{kes}
\int (\det M(\bar{\Trs}))^{1/n}\chi_{\mathcal{B}_k}(V_n(\bar{\Trs})) d \mu_\alpha(\bar{\Trs}) \leq C_nA_0(\ell_n(\mathcal{B}))^{\frac{1}{n}}.
\end{equation}

\noindent
\textbf{Estimate on $\mathbf{\det M}$.} The matrix $M$ is clearly a non-negative symmetric $n\times n$ matrix. 
We show that for any $\epsilon > 0$, $M$ is positively bounded from below 
outside the ball in $H^1$ of radius $\epsilon\pg 0$ centered at the origin. 
Observe that $M$ is an infinite sum of non-negative matrices $M^m$ with coefficients
\begin{equation}
M^m_{i,j}=  a_m^2(f_i'(\Trs),e_m)(f_j'(\Trs),e_m).
\end{equation}
Then, for any vector $v=(v_1,...,v_n)\in \RR^n$, we have
\begin{align*}
(v, Mv) &=\sum_{m\geq 0}(v, M^m v)=\sum_{m\geq 0}\sum_{1\leq i, j\leq n}M^m_{i,j}v_iv_j\\ 
&=\sum_{m\geq 0} a_m^2\sum_{1\leq i, j\leq n}v_i v_j (f_i'(\Trs(x)), e_m(x))  (f_j'(\Trs(x)), e_m(x)) \\
&=\sum_{m\geq 0} a_m^2\left(\sum_{j=1}^n v_j  (f_j'(\Trs(x)), e_m(x))\right)^2=
\sum_{m\geq 0} a_m^2\left(\sum_{j=1}^n v_jf_j'(\Trs(x)),e_m \right)^2.
\end{align*}
Suppose that $(v, Mv)=0$ for some $v \neq 0$, since $ a_m\neq 0$ for all $m$, 
\begin{align*}
\left(\sum_{j=1}^n v_jf_j'(\Trs),e_m \right) = 0 \qquad \textrm{for all } m \geq 0.
\end{align*}
Hence, the function $x\mapsto\sum_{j=1}^n v_jf_j'(\Trs(x))$ is constant, that is,  there is $C$ such that 
\begin{align*}
\sum_{j=1}^n v_jf_j'(\Trs(x))=C \qquad  \textrm{for all } x\in \TT.
\end{align*}
By the independence property \eqref{ipf},  
either $v_k = 0$ for each $k$ or $\Trs \equiv C$. Since $v \neq 0$,  the latter 
property holds and since $\Trs$ has zero mean we have $\Trs \equiv 0$.  

Therefore if $\Trs \not \equiv 0$, then  $\textrm{det} (M) > 0$.
Next, denote $I_\epsilon=\{\|\Trs\| \geq\epsilon,\ \ \|\Trs\|_{H^1}\leq\frac{1}{\epsilon}\}$, and note that $I_\epsilon$  is compact in $L^2$. Indeed, 
if $(\Trs_j)_j \subset I_\epsilon$, then $\|\Trs_j\|_{H^1} \leq \frac{1}{\varepsilon}$, 
and therefore there exists subsequence, still denoted
$(\Trs_j)_j$,  converging to $\Trs_\infty$ weakly in $H^1$ and strongly in $L^2$.  Weak lower semi-continuity, and strong continuity of norms 
yield $\Trs \in I_\varepsilon$ as desired.  
 
By smoothness of $f_k$, uniform boundedness of $f'$, and $A_0 < \infty$, the map 
$\Trs \mapsto M(\Trs) : H^{1} \to \RR^{n\times n}$ is continuous, and consequently $\Trs \mapsto \det M(\Trs): H^{1}\to [0, \infty)$ is continuous
as well. Since $\det M > 0$  on the compact set $I_\varepsilon$, then
 $\det M(\Trs)\geq c_\varepsilon > 0$ on $I_\varepsilon$.

\noindent
\textbf{Conclusion.} For the Borel set $\mathcal{B}_k$ fixed in \eqref{kes}, 
and $V_n$ defined in 
\eqref{Krylov_vector} one has for any $k$
\begin{align*}
\mu_\alpha(\{V_n(\Trs)\in \mathcal{B}_k\})\leq 
\mu_\alpha(\{V_n(\Trs)\in \mathcal{B}_k\}\cap\{\Trs \in I_\epsilon\})
+
\mu_\alpha\left(\{V_n(\Trs)\in \mathcal{B}_k\}\cap \left\{ \Trs  \in I_\varepsilon^c \right\}\right)
=I+II.
\end{align*}
Since $\det M(\Trs)\geq c_\epsilon,$ on $I_\epsilon$,  \eqref{kes}
yields
\begin{align*}
I\leq A_0C_nc_\epsilon^{-\frac{1}{n}} (\ell_n(\mathcal{B}))^{\frac{1}{n}}.
\end{align*}
Next, since $(f_k)$ are uniformly, globally Lipschitz and $f_k(0) = 0$, there exists $L \geq 0$ such that 
$|f_k(z)| \leq L |z|$ for any $k$ and any $z \in \RR$.  Then, for any $\Trs$ with 
$V_n(\Trs) \in \mathcal{B}_k$ one has $|V_n(\Trs)| \geq \frac{1}{k}$, and therefore
\begin{equation}
\frac{1}{k} \leq C \max_j |F_j(\Trs)| \leq C \max_j \int_{\TT} |f_j(\Trs(x))| dx 
\leq C\|\Trs\| \,,
\end{equation}
where we used Jensen's inequality in the last estimate. Without loss of generality assume $C \geq 1$.
Thus,  by \eqref{igi} and the Chebyshev inequality, if $\varepsilon < \frac{1}{C k}$, then 
\begin{align}
II &\leq  \mu_\alpha\left(\{V_n(\Trs)\in \mathcal{B}_k\}\cap \left\{ \|\Trs\|_{H^1} \geq \frac{1}{\varepsilon} \right\}\right) + 
 \underbrace{\mu_\alpha\left(\{V_n(\Trs)\in \mathcal{B}_k\}\cap \left\{ \|\Trs\| \leq \varepsilon \right\}\right)}_{ = 0}
\\
&\leq 
  \mu_\alpha\left(\left\{ \|\Trs\|_{H^1} \geq \frac{1}{\varepsilon} \right\}\right) 
\leq 
\frac{A_0}{2}\epsilon^2.
\end{align}
Gathering these estimates, we arrive at
\begin{equation}\label{absolcontVn}
\mu_\alpha(\{V_n(\Trs)\in \mathcal{B}_k\})\leq \frac{A_0}{2}\epsilon^2 + C_n c_\epsilon^{-\frac{1}{n}} (\ell_n(\mathcal{B}))^{\frac{1}{n}} 
\end{equation}
and by Portmanteau theorem,  $(\ref{absolcontVn})$ is valid with $\mu_\alpha$ replaced by the limiting measure $\mu$.
If $\ell_n(\mathcal{B}) = 0$, then since $\varepsilon > 0$ is arbitrary, we obtain 
$\mu(\{V_n(\Trs)\in \mathcal{B}_k\}) = 0$ for any $k > 0$. Taking the countable 
union in integer $k > 0$, we arrive to $\mu(\{V_n(\Trs)\in \mathcal{B} \setminus \{0\} \}) = 0$.

Since $f_1$ is non-negative and $f_1 > 0$ in a punctured neighbourhood of zero, then for any continuous, zero mean function $\Trs \not \equiv 0$ one has $F_1(\Trs) \neq 0$. Hence, 
\begin{equation}
\mu(\{V_n(\Trs)\in \mathcal{B} \} \setminus \{0\}) = 
\mu(\{V_n(\Trs)\in \mathcal{B} \setminus \{0\} \}) = 0\,. 
\end{equation}
By the definition of $\tilde{\mu}$,
\begin{equation}\label{rfe}
\tilde{\mu}(\{V_n(\Trs)\in \mathcal{B} \}) = \frac{\mu(\{V_n(\Trs)\in \mathcal{B} \} \setminus \{0\})}{S} = 0 
\end{equation}
for any $\mathcal{B}$ with $\ell_n(\mathcal{B}) = 0$.

Finally, we prove that $\tilde{\mu}$ is infinite dimensional. 
Let $K \subset L^2$ be a compact set with finite Hausdorff dimension
$\dim_H (K) =: h$  and fix an integer $n>h$.
We claim that $V_n$ defined in \eqref{Krylov_vector}  is differentiable on $L^2$. Indeed, each component $F_k$ of $V_n$ satisfies
 \begin{align}
 |F_k''(\Trs;u,v)|= \frac{1}{|\TT|} \left|\int_{\TT}f_k''(\Trs)uvdx\right| \leq C \int_{\TT} |uv|dx \leq 
 C \|u\|\|v\| \,,
 \end{align}
 where we used that $f''_k$ is bounded. In particular, $V_n$ is  locally Lipschitz. 
 Since locally Lipschitz maps do not increase the Hausdorff dimension, 
$\dim_H(\mathcal{V}) \leq h < n$, where   
 $\mathcal{V} := V_n(K) \in \RR^n$, and therefore $\ell_n(\mathcal{V})=0$. 
 Then, by \eqref{rfe}
\begin{align*}
\tilde{\mu}(K)\leq \tilde{\mu}(\{V_n(\Trs) \in \mathcal{V}\})
= 0 
\end{align*}
as desired.
\end{proof}

\appendix

\section{Some facts on the fluctuation-dissipation approach for finite-dimensional Hamiltonian systems}\label{sec:ham}

In this section we elaborate  on the question that was raised in the introduction: Do the constructed invariant measure $\mu$ for \eqref{sqg} concentrates on the 
equilibria? Although we proved that the support of $\mu$ is infinite dimensional, it also known that the set of equilibria is also infinite dimensional: any solution of the equation 
\begin{equation}
 (-\Delta)^{\frac{1}{2}} \Phi = F(\Phi)
\end{equation}
is an equilibrium of \eqref{sqg}. Since every equilibrium is trivially a global solution,  there is a possibility that $\mu$ concentrates on the set of equilibria, and we 
did not construct any new solution. As mentioned above, we don't have a definite answer to this question, however we provide an example of a general system for which the measure arising from fluctuation dissipation method is not supported on equilibria. 

Since the SQG equation has a Hamiltonian structure, we will focus only on the Hamiltonian systems. There are several trivial examples in which the equilibria 
form a discrete set, and therefore  are of measure zero, for instance the cubic defocusing Schr\"odinger equation with only one equilibrium. The example closest to
SQG is 2D Euler equation, which has infinite dimensional manifold of equilibria with 
similar structure. However, whether the invariant measures for 2D Euler equation concentrate on equilibria is an open question, hence regularizing the problem 
might not help. 

Let us turn our attention to finite dimensional systems. Consider a $2n$-dimensional Hamiltonian system
\begin{align}\label{ohs}
\dot{x}=-\partial_yH(x,y),\quad \dot{y}=\partial_xH(x,y),
\end{align}
where $H:\RR^n\times\RR^n\to \RR$ is a smooth Hamiltonian function. It is well known that $f(H)dxdy$ is an invariant measure for the system, for any integrable smooth function $f$. 
We consider now the following fluctuation-dissipation model
\begin{align}\label{odefd}
dx=(-\partial_yH(x,y)-\alpha\partial_xH(x,y))dt + \sqrt{2\alpha}d\beta_1,\quad
 dy= (\partial_xH(x,y)-\alpha\partial_yH(x,y))dt + \sqrt{2\alpha}d\beta_2,
\end{align}
where $\beta_1, \beta_2$ are independent Brownian motions. 
Then, $e^{-H(x,y)}$ is a density of an invariant measure for \eqref{odefd}, since  
$e^{-H(x,y)}$ is solution of the 
Fokker-Plank equation
\begin{align}
\mathcal{L}\rho=\alpha\Delta\rho-\nabla\cdot\left[(\partial_yH(x,y)+\alpha\partial_xH(x,y),-\partial_xH(x,y)+\alpha\partial_yH(x,y))^T\rho\right]=0.
\end{align}
Thus $\mu(dxdy)=T^{-1}e^{-H(x,y)}dxdy$ is an invariant probability measure of 
\eqref{odefd}, were we denote $T=\int_{\RR^n\times\RR^n} e^{-H(x,y)}dxdy$ to be a partition function (normalization). Note that $T$ is finite if $H$ has appropriate increase at infinity. 
Observe that $\mu$  does not depend on $\alpha$, thus by passing  $\alpha  \to 0$, we see that $\mu$ is an invariant measure of \eqref{ohs}. 

If $H$  is constant on the unit ball of $\RR^n\times\RR^n$, then any point in that ball is an equilibrium of \eqref{ohs}, and therefore we have an open set of equilibria. On the other hand,  $\mu$ has positive density everywhere and in particular its support coincides with the whole space. There might be a possibility to apply this reasoning to infinite dimensional systems, but there are serious difficulties with coercivity of the dissipation. We leave this question open. 

\section{It\^o formula}\label{sec:Ito}

For reader's convenience we recall 
the It\^o formula in infinite dimensions, which used several times in the proofs of main results. 
We say that the equation $\eqref{sqgs}$  has the It\^o property on the triple $(H^{s-1},H^s,H^{s+1})$ if
\begin{enumerate}
\item for some $T> 0$,  \eqref{sqgs} has a unique solution on $[0, T)$
 for any data in $H^s$;
\item  
the process $h:=-\al(\Delta^2\Trs-\nabla(|\nabla\Trs|^2\nabla\Trs))-\bfU\cdot\nabla\Trs$ is $\mathcal{F}_t$-adapted and
\begin{align}
\mathbb{P}\left(\int_0^t(\|\Trs(r)\|_{s+1}^2+\|h(r)\|_{s-1}^2)dr< \infty, \ \ \forall\ t>0\right)=1,\ \ 
\sum_{m>0} a_m^{2}\lambda_m^s<\infty.\label{intro_hyp_KS}
\end{align}
\end{enumerate}
We have the following version of the It\^o's lemma proved in 
\cite[Section $A.7$]{KS12}. 

\begin{Thm}[\cite{KS12}]\label{intro_KS_Ito_theorem}
Let $F\in C^2(H^s,\RR)$ be a functional which is locally uniformly continuous, together with its first two derivatives, on $H^s$. Suppose that \eqref{sqgs} satisfies the It\^o property on $(H^{s-1},H^s,H^{s+1})$ and that $F$ satisfies the following conditions:
\begin{enumerate}
\item There is a function $K:\RR_+\to\RR_+$ such that
\begin{equation}\label{intro_KS_Ito_F1}
|F'(\Trs;v)|\leq K(\|\Trs\|_{s})\|\Trs\|_{{s+1}}\|v\|_{{s-1}},\ \ \ \Trs\in H^{s+1},\ \ v\in H^{s-1}.
\end{equation}
\item For any sequence $\{w_k\}\subset H^{s+1}$ converging toward $w\in H^{s+1}$ and any $v\in H^{s-1}$, we have
\begin{equation}\label{intro_KS_Ito_F2}
F'(w_k;v)\to F'(w;v),\ \ as\ \ k\to\infty.
\end{equation}
\item The solution $\Trs$ of \eqref{sqgs} satisfies
\begin{equation}\label{intro_KS_Ito_F3}
\sum_{m} a_m^2\E\int_0^t|F'(\Trs;e_m)|^2ds <\infty\ \ \ \ for\ all \ t>0.
\end{equation}
\end{enumerate}
Then we have
\begin{align}\label{intro_KS_Ito}
F(\Trs(t))=F(\Trs(0))+\int_0^t\left(F'(\Trs(s);h(s))+\frac{\al}{2}\sum_{m} a_m^2F''(\Trs(s); e_m, e_m)\right)ds \nonumber\\
+\sqrt{\al}\sum_{m} a_m\int_0^tF'(\Trs(s);e_m)d W_m(s).
\end{align}
In particular,
\begin{equation}
\E F(\Trs(t))=\E F(\Trs(0))+\int_0^t\E\left(F'(\Trs(s); h(s))+\frac{\al}{2}\sum_{m} a_m^2F''(\Trs(s); e_m, e_m)\right)ds.
\end{equation}
If one omits $(\ref{intro_KS_Ito_F3}),$ then we have the formula $(\ref{intro_KS_Ito})$ where $t$ is replaced by the stopping time $t\wedge \tau_n$, with
\begin{equation}
\tau_n=\inf\{t\geq 0,\ \|\Trs(t)\|_{s}> n\}, \ \ n\geq 0,
\end{equation}
with the convention $\inf\emptyset=+\infty.$
\end{Thm}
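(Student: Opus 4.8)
The statement is a version of the infinite–dimensional It\^o formula, and the natural plan is a finite–dimensional (Galerkin) approximation followed by a passage to the limit. Let $P_N$ denote the orthogonal projection onto $H_N := \mathrm{span}\{e_1,\dots,e_N\}$ and write $\Trs_N := P_N\Trs$. The plan is to first establish the identity for $F(\Trs_N)$ using the classical finite–dimensional It\^o formula (which applies since $F$ is $C^2$), and then to send $N\to\infty$, controlling each of the four resulting terms by the structural hypotheses \eqref{intro_KS_Ito_F1}--\eqref{intro_KS_Ito_F3} together with the integrability built into the It\^o property \eqref{intro_hyp_KS}.

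For the first step I would introduce the coordinates $c_m(t):=\langle\Trs(t),e_m\rangle$, which by the mild form of \eqref{sqgs} satisfy the scalar It\^o equations $dc_m = \langle h,e_m\rangle\,dt + \sqrt{\al}\,a_m\,dW_m$, the $W_m$ being independent. Restricting $F$ to $H_N\cong\RR^N$ and applying the classical It\^o formula, while identifying the coordinate derivatives $\partial_{c_m}F(\Trs_N)=F'(\Trs_N;e_m)$ and $\partial_{c_m}\partial_{c_k}F(\Trs_N)=F''(\Trs_N;e_m,e_k)$, yields
\begin{equation}
F(\Trs_N(t))=F(\Trs_N(0))+\int_0^t\Big(F'(\Trs_N;P_Nh)+\tfrac{\al}{2}\sum_{m\leq N}a_m^2F''(\Trs_N;e_m,e_m)\Big)\,ds+\sqrt{\al}\sum_{m\leq N}a_m\int_0^tF'(\Trs_N;e_m)\,dW_m,
\end{equation}
where I have used linearity of $v\mapsto F'(\Trs_N;v)$ to collapse $\sum_{m\leq N}F'(\Trs_N;e_m)\langle h,e_m\rangle$ into $F'(\Trs_N;P_Nh)$.

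The heart of the argument is the limit $N\to\infty$. Since $\Trs(t)\in H^{s+1}$ for a.e.\ $t$ by \eqref{intro_hyp_KS} and $P_N\to\mathrm{Id}$ strongly on every $H^\sigma$, one has $\Trs_N\to\Trs$ in $H^{s+1}$ and $P_Nh\to h$ in $H^{s-1}$ for a.e.\ $(s,\omega)$. The term $F(\Trs_N(t))\to F(\Trs(t))$ follows from continuity of $F$ on $H^s$. For the trace term, local uniform continuity of $F''$ makes $F''(\Trs;\cdot,\cdot)$ a bounded bilinear form on $H^s\times H^s$ over the sublevel sets $\{\|\Trs\|_s\leq n\}$, giving $|F''(\Trs;e_m,e_m)|\leq C(\|\Trs\|_s)\lambda_m^s$; since $\sum_m a_m^2\lambda_m^s<\infty$ by \eqref{intro_hyp_KS}, dominated convergence applies once $F''(\Trs_N;e_m,e_m)\to F''(\Trs;e_m,e_m)$. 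The martingale term is handled through the It\^o isometry: by \eqref{intro_KS_Ito_F3} the limiting quadratic variation $\al\sum_m a_m^2\int_0^t|F'(\Trs;e_m)|^2\,ds$ is finite, and combining \eqref{intro_KS_Ito_F1} with \eqref{intro_hyp_KS} supplies an integrable dominating bound, so the stochastic integrals converge in $L^2$.

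I expect the main obstacle to be the drift term $\int_0^t F'(\Trs_N;P_Nh)\,ds$, because both arguments of $F'$ move simultaneously while hypothesis \eqref{intro_KS_Ito_F2} only grants continuity in the base point. I would resolve this with the splitting
\begin{equation}
F'(\Trs_N;P_Nh)-F'(\Trs;h)=F'(\Trs_N;P_Nh-h)+\big(F'(\Trs_N;h)-F'(\Trs;h)\big),
\end{equation}
bounding the first summand by $K(\|\Trs_N\|_s)\|\Trs_N\|_{s+1}\|P_Nh-h\|_{s-1}$ via \eqref{intro_KS_Ito_F1} and sending the second to zero by \eqref{intro_KS_Ito_F2}; the pointwise limit is then upgraded to the integral limit by dominated convergence, the dominating function $K(\|\Trs\|_s)\|\Trs\|_{s+1}\|h\|_{s-1}$ being integrable by \eqref{intro_hyp_KS} and the continuity $\Trs\in C([0,t],H^s)$. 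Finally, when \eqref{intro_KS_Ito_F3} is dropped, I would localize by the stopping times $\tau_n=\inf\{t:\|\Trs(t)\|_s>n\}$: on $[0,t\wedge\tau_n]$ the factor $K(\|\Trs\|_s)$ is deterministically bounded by $K(n)$, which restores every estimate above and yields the stated stopped identity.
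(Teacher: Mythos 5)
First, a point of orientation: the paper does not prove this statement at all --- it is recalled verbatim from Kuksin--Shirikyan \cite[Section A.7]{KS12}, so there is no internal proof to compare against. Your Galerkin scheme is the standard route to such results and matches the spirit of the cited argument. Most of its steps are sound as \emph{pathwise} (almost sure) assertions: the finite-dimensional It\^o formula for $F(P_N\Trs)$ with diagonal diffusion, the collapse of the drift sum into $F'(\Trs_N;P_Nh)$, the splitting of $F'(\Trs_N;P_Nh)-F'(\Trs;h)$ handled by \eqref{intro_KS_Ito_F1}--\eqref{intro_KS_Ito_F2}, and the trace term via $|F''(\Trs;e_m,e_m)|\leq C(\|\Trs\|_{s})\lambda_m^{s}$ summed against $\sum_m a_m^2\lambda_m^{s}<\infty$, all close by dominated convergence in time along a fixed path (modulo the harmless replacement of $K$ by a nondecreasing, locally bounded majorant, so that bounds like $K(\|\Trs_N\|_{s})\leq K(\|\Trs\|_{s})$ are legitimate).

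The genuine gap is the martingale term. You claim that \eqref{intro_KS_Ito_F1} together with \eqref{intro_hyp_KS} ``supplies an integrable dominating bound, so the stochastic integrals converge in $L^2$.'' But \eqref{intro_hyp_KS} is only an almost-sure statement, $\Prb\bigl(\int_0^t(\|\Trs\|_{s+1}^2+\|h\|_{s-1}^2)\,dr<\infty\bigr)=1$, and carries no moment information; likewise $K\bigl(\sup_{r\leq t}\|\Trs(r)\|_{s}\bigr)$ is a.s.\ finite but unbounded and not known to be integrable. Hence the quadratic variations of the approximating integrals satisfy only
\begin{equation}
\sum_{m\leq N}a_m^2\int_0^t|F'(\Trs_N;e_m)|^2\,ds\ \leq\ K\Bigl(\sup_{r\leq t}\|\Trs(r)\|_{s}\Bigr)^2\Bigl(\sum_m a_m^2\lambda_m^{s-1}\Bigr)\int_0^t\|\Trs\|_{s+1}^2\,ds\ <\ \infty \quad \textrm{a.s.},
\end{equation}
with no control on expectations; the approximating integrals are a priori only continuous local martingales, and the It\^o isometry is simply unavailable. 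Hypothesis \eqref{intro_KS_Ito_F3} bounds the \emph{limiting} integrand in expectation, which cannot dominate the sequence $F'(\Trs_N;e_m)$. The repair is to deploy at the outset the localization you postpone to the last line: set $\sigma_n=\tau_n\wedge\inf\{t:\int_0^t(\|\Trs\|_{s+1}^2+\|h\|_{s-1}^2)\,dr>n\}$. On $[0,t\wedge\sigma_n]$ every integrand above is bounded by deterministic constants, all stochastic integrals are genuine $L^2$ martingales, and your $N\to\infty$ limit goes through with the isometry now legitimate; this yields precisely the stopped formula asserted in the final part of the theorem. Then let $n\to\infty$: by \eqref{intro_hyp_KS} and the continuity of $t\mapsto\|\Trs(t)\|_{s}$ one has $\sigma_n\uparrow\infty$ a.s., so for fixed $t$ eventually $\sigma_n>t$ and the stopped identity becomes \eqref{intro_KS_Ito} almost surely; the role of \eqref{intro_KS_Ito_F3} is then to make the limiting stochastic integral a zero-mean $L^2$ martingale, which is what justifies taking expectations in the ``in particular'' identity. (Alternatively one can forgo localization by invoking stochastic dominated convergence --- convergence in probability of the quadratic variation of the difference implies convergence in probability of the stochastic integrals --- but some such device is needed; the $L^2$/isometry argument as written does not work.)
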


\section{Embedding \texorpdfstring{$L^2H^2 \cap W^{1,\frac{4}{3}}W^{-1, \frac{4}{3}} \hookrightarrow CH^{-\delta}$}{Lg}}\label{sec:imbed}

Although the parabolic embedding 
  $L^2H^2 \cap W^{1,\frac{4}{3}}W^{-1, \frac{4}{3}} \hookrightarrow CH^{-\delta}$
follows from standard 
arguments
we were not able to locate the proof in the literature. Hence, we outline the main steps
in this appendix. 
 
By 
\cite[Theorem 5.2]{Amann2000}, we have for any  $\theta > \frac{2}{3}$
\begin{equation}
L^2H^2 \cap W^{1,\frac{4}{3}}W^{-1, \frac{4}{3}} \hookrightarrow
C (H^2, W^{-1, \frac{4}{3}})_{\theta, p_\theta} \,,
\end{equation}
where $(H^2, W^{-1, \frac{4}{3}})_{\theta, p_\theta}$ is the real interpolation space and $p_\theta$
satisfies 
\begin{equation}
\frac{1}{p_\theta} = \frac{1-\theta}{2} + \frac{\theta}{\frac{4}{3}} 
\end{equation}
However, by \cite[(3.5)]{Amann2000}
for any $\varepsilon \in (0, 1)$ one has
\begin{equation}
(H^2, W^{-1, \frac{4}{3}})_{\theta, p_\theta} \hookrightarrow (H^{2 - \varepsilon}, W^{-1 - \varepsilon, \frac{4}{3}})_{\theta, p_\theta} =
(B^{2-\varepsilon}_{2, 2}, B^{-1- \varepsilon}_{\frac{4}{3}, \frac{4}{3}})_{\theta, p_\theta} \,,
\end{equation}
where $B^{s}_{p, q}$ is a Bessov space. From \cite[Theorem 6.4.5, (3)]{BerghLofstrom1976}
 and \cite[(3.5)]{Amann2000}
follows
\begin{equation}
(B^{2-\varepsilon}_{2, 2}, B^{-1- \varepsilon}_{\frac{4}{3}, \frac{4}{3}})_{\theta, p_\theta} = B^{(-3+\varepsilon)\theta + (2 - \varepsilon)}_{\frac{4}{2 + \theta}, \frac{4}{2 + \theta}}
= W^{(-3+\varepsilon)\theta + (2 - \varepsilon), \frac{4}{2 + \theta}} \,.
\end{equation}
Finally, by Sobolev embeddings 
\begin{equation}
W^{(-3+\varepsilon)\theta + (2 - \varepsilon), \frac{4}{2 + \theta}} \hookrightarrow W^{-\delta, 2} \,,
\end{equation}
where $\delta \leq \frac{5}{3} - \varepsilon + (3 - \varepsilon)\theta$. Since $\theta > \frac{2}{3}$ and $\varepsilon > 0$ can be chosen arbitrarily close to $\frac{2}{3}$
and $0$ respectively, one obtains 
\begin{equation}
L^2H^2 \cap W^{1,\frac{4}{3}}W^{-1, \frac{4}{3}} \hookrightarrow C W^{-\delta, 2}
\end{equation} 
for any $\delta > \frac{1}{3}$ as desired.

\end{document}